\documentclass[11pt]{article}

\usepackage[margin=1in]{geometry}
\usepackage{amsmath,amssymb,amsthm,mathtools,mathrsfs}
\usepackage{enumitem}
\usepackage{hyperref}
\usepackage{cleveref}
\usepackage{bm}
\usepackage{tikz-cd}
\usepackage{array}
\usepackage{booktabs}
\usepackage{microtype}
\usepackage{xcolor}
\usepackage{authblk}

\hypersetup{colorlinks=true,linkcolor=blue,citecolor=blue,urlcolor=blue}

\newtheorem{theorem}{Theorem}[section]
\newtheorem{proposition}[theorem]{Proposition}
\newtheorem{lemma}[theorem]{Lemma}
\newtheorem{corollary}[theorem]{Corollary}
\theoremstyle{definition}
\newtheorem{definition}[theorem]{Definition}
\newtheorem{remark}[theorem]{Remark}
\newtheorem{example}[theorem]{Example}

\newcommand{\R}{\mathbb{R}}

\newcommand{\N}{\mathbb{N}}
\newcommand{\Z}{\mathbb{Z}}
\newcommand{\supp}{\operatorname{supp}}
\newcommand{\vect}{\operatorname{Vec}}

\newcommand{\ReLU}{\operatorname{ReLU}}
\newcommand{\Stack}{\operatorname{Stack}}
\newcommand{\Fix}{\operatorname{Fix}}

\newcommand{\cS}{\mathcal{S}}

\newcommand{\Ups}{\Upsilon}
\newcommand{\transpose}{\mathsf{T}}

\newcommand{\cube}{\mathcal{C}}

\newcommand{\mar}{\varrho}

\newcommand{\doi}[1]{\href{https://doi.org/#1}{DOI: #1}}

\title{Exact Loop Controllers for ReLU Realization of Homogeneous Curve Refinements\footnote{Extended arXiv version containing additional corollaries and examples}}
\author[1,2]{Boldsaikhan Bolorkhuu}
\author[1,2,3]{Tsogtgerel Gantumur}

\affil[1]{McGill University}
\affil[2]{National University of Mongolia}
\affil[3]{Institute of Mathematics and Digital Technology, Mongolian Academy of Sciences}

\date{April 28, 2026}

\begin{document}
\maketitle

\begin{abstract}
We study homogeneous refinement operators of the form
\[
(V\gamma)(t)=\sum_{j\in\mathbb Z}A_j\gamma(Mt-j),
\]
acting on compactly supported continuous piecewise linear curves
\(\gamma:\mathbb R\to\mathbb R^p\), where \(M\ge2\) and only finitely many matrices
\(A_j\in\mathbb R^{p\times p}\) are nonzero. We prove that the iterates \(V^n\gamma\)
admit exact ReLU realizations of fixed width and depth \(O(n)\).
The main new ingredient is an exact loop controller for the residual dynamics. Instead of
propagating scalar residual surrogates, the construction transports the residual orbit by a
forward-exact state on a polygonal loop. Scalar factors and digit selectors are then recovered from
this loop state by complementary CPwL readouts. The loop seam is not removed, but its remaining
ambiguity is confined to the final readout/selector stage, where it is harmless because the scalar
atom is supported away from the seam. This gives a homogeneous \(M\)-ary vector-valued extension
of the scalar binary refinable-function construction with a more geometric controller architecture.

We also record crude exponential bounds on the network weights and biases. As consequences, affine
forcing terms are treated by expanding affine iterates into finite sums of homogeneous iterates,
yielding exact fixed-width realizations with depth \(O(n^2)\), and anchored open curves reduce to
compactly supported defects with affine anchor mismatch. Finally, we describe homogeneous
polygonal generators, including dragon-type examples and a self-intersecting Hilbert-type prototype
in arbitrary dimension.

The extended version also records additional reductions and examples not included in the journal
version. These include stage-dependent forcing, finite-state
stacking reductions, and further geometric constructions such as Koch-, Gosper-, Morton-, and connector-based
Hilbert-type variants.
\end{abstract}

\tableofcontents

\section{Introduction}
\label{sec:introduction}

\subsection{Background and motivation}

Neural-network approximation theory \cite{approx,nnapprox} seeks structural explanations for why deep networks can
efficiently represent highly oscillatory, highly recursive, or highly self-similar functions.
A particularly striking result in this direction is the scalar binary theorem on refinable functions
\cite{source}: if
\[
(Vg)(x)=\sum_{j=0}^{N} c_j g(2x-j),
\]
and \(g\) is compactly supported and continuous piecewise linear, then the iterates \(V^n g\)
admit fixed-width ReLU realizations with depth growing linearly in \(n\).
The proof is built around a cascade representation, a controlled replacement of discontinuous digit
extraction by CPwL (continuous and piecewise linear) surrogates, and a recursive fixed-depth update block.

The purpose of the present paper is to develop a homogeneous \(M\)-ary vector-valued extension of
the scalar binary picture.
More precisely, we consider operators
\[
(V\gamma)(t)=\sum_{j\in\mathbb Z}A_j\gamma(Mt-j),
\]
where \(M\ge 2\) is an integer, the matrices \(A_j\in\mathbb R^{p\times p}\) vanish for all but finitely
many \(j\), and \(\gamma:\mathbb R\to\mathbb R^p\) is a compactly supported CPwL curve.
Our main result shows that the iterates \(V^n\gamma\) admit exact ReLU realizations of fixed width
and depth \(O(n)\).

The new feature is not merely the passage from binary scalar refinement to \(M\)-ary vector-valued
refinement.  At the level of realizability, such an extension can be obtained by adapting the
scalar-surrogate machinery of \cite{source}.  The point here is a different controller architecture.
The residual dynamics is naturally circular, and we encode it by an exact state evolving on a
polygonal loop.  Thus the construction no longer has to propagate approximate scalar residuals
forward through the network.  Instead, the residual position is transported exactly, and the scalar
factors and digit selectors needed by the cascade are recovered from this loop state by complementary
CPwL readouts.

This does not remove the seam issue altogether.  The loop identifies the two endpoints of the
unit interval, so no single continuous readout can serve as a globally unambiguous coordinate.
The construction instead arranges that the remaining ambiguity appears only at the final scalar
readout and selector step, where it is harmless because the scalar atom is supported away from the
seam.  Thus the gain over scalar surrogate iteration is that the forward residual stream is exact,
while the unavoidable seam ambiguity is handled only at the end of the construction.

Our original motivation came from a viewpoint complementary to that of \cite{fractals}.
There, the emphasis is on constructing neural-network functions that act as nearly exact
indicators for fractal sets. The present work starts from the dual question of \emph{fractal generation}:
can one construct recursive fractal \emph{curves} themselves, as vector-valued neural-network outputs
obtained from successive refinement steps?
This is dual in much the same way that a curve can be described either by a parametrization
or by an implicit equation.
From this point of view, passing from scalar functions to vector-valued curves is not a cosmetic
extension but a practical one: it brings recursive curve constructions, including curve-based fractal
and space-filling examples, directly into the neural-network realization framework.

The broader project that led to this paper also includes affine refinement operators and
stage-dependent forcing rules.
In the present paper, however, we deliberately center the exact \(O(n)\)-depth theory on the
homogeneous case, where the loop-controller mechanism is cleanest and most transparent.
Affine and stage-dependent situations are still discussed, but only through black-box reductions
from the homogeneous theorem, with weaker complexity bounds.

\subsection{Main results}

We now state the main realization theorem, then summarize the principal reductions and applications. 
The detailed proofs are given in Section~\ref{sec:homogeneous-extension}. 
As standardized by \cite{approx,nnapprox}, we write
\(\Ups_{W,L}(\ReLU;d,N)\)
for the set of outputs of fully connected ReLU networks with width \(W\), depth \(L\), input dimension \(d\), and output dimension \(N\).

\begin{theorem}
\label{thm:intro-homogeneous}
Fix an integer \(M\ge 2\).
Let \(\gamma:\R\to\R^p\) be a compactly supported CPwL curve with
\(\supp\gamma\subset[0,L]\), and assume that \(V\) preserves the support window \([0,L]\).
If \(\gamma\) has at most \(m\) breakpoints, then there exist constants \(C_0,C_1>0\), depending only on \(M\), \(p\), \(L\), and \(m\), such that
\[
V^n\gamma\in \Ups_{C_0,C_1 n}(\ReLU;1,p), \qquad n\ge 1.
\]
In particular, the width remains bounded independently of \(n\) and the depth is linear in \(n\).
\end{theorem}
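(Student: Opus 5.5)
The plan follows the cascade structure of the scalar binary construction, with the residual stream carried by an exact loop state instead of scalar surrogates.

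\textbf{Step 1: cascade representation.} Write $\gamma=\sum_{k}\phi_k\,v_k$, where $\phi_k$ are hat-type scalar atoms supported on unit subintervals of $[0,L]$ (after refining the breakpoints) and $v_k\in\R^p$. Unfold $V^n$. Since $V$ preserves $[0,L]$, only indices $j$ in a finite window $J$ matter. For $t\in[0,L]$ write $t=d+r$ with integer part $d\in\{0,\dots,L-1\}$ and residual $r\in[0,1)$. One step of $V$ then acts on a finite vector of "local pieces": the values $\gamma(M(d+r)-j)$ depend on the next $M$-ary digit of $r$ and on the shifted residual $Mr \bmod 1$. This gives a finite-state transfer recursion
\[
(V^n\gamma)(d+r)=\Bigl[\prod_{i=1}^{n}T_{\delta_i(r)}\Bigr]\,\Phi\bigl(\{M^n r\}\bigr),
\]
where $\delta_i(r)$ is the $i$-th $M$-ary digit of $r$, each $T_\delta$ is a fixed block matrix of size depending only on $p,L,M,m$ built from the $A_j$, and $\Phi$ is a fixed CPwL vector of atom values. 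The integer part $d$ is absorbed into the initial state vector.

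\textbf{Step 2: the loop controller.} Replace $r\in[0,1)$ by a point $\theta$ on a polygonal loop in $\R^2$, for instance the boundary of a square parametrized by arclength, so that $r\mapsto Mr\bmod1$ becomes a continuous degree-$M$ map of the loop. This map is CPwL on the ambient plane near the loop and can be realized exactly by a fixed-size ReLU block. Iterating the block gives the exact forward state $\theta_i=\{M^i r\}$ on the loop at every layer. There are no surrogate errors, because the map is continuous on the loop. Digit selectors $\chi_\delta(\theta_{i-1})$ and scalar factors are recovered by CPwL readouts. Following the paper's design, I would use two complementary readouts, one valid away from the seam and the other valid near it, so that each selector equals its exact value except on a small neighbourhood of the seam.

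\textbf{Step 3: recursive update and seam handling.} Carry a vector $y_i$ of bounded dimension and apply $y_i=\sum_\delta \chi_\delta(\theta_{i-1})\,T_\delta y_{i-1}$. The products of a selector with the linear terms are computed exactly using the standard identity $\chi\cdot z=\ReLU(z+B\chi-B)-\ReLU(-z+B\chi-B)$ for $\chi\in\{0,1\}$ and $|z|\le B$; here $B$ grows exponentially in $n$, which gives the crude weight bounds. This requires the selectors to be exactly $0/1$ at the layers where they are used. Near the seam they are not, and this is the main obstacle. My approach is to carry the products in an order that keeps them consistent along orbits passing near the seam, and to defer the ambiguity: the final output multiplies by $\Phi(\theta_n)$, whose atoms vanish near the seam. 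Concretely, I would check that any point whose orbit hits a seam neighbourhood at some step $i$ has a neighbour in $[0,1)$ differing by a digit carry, and that both interpretations give the same value of $V^n\gamma$, by continuity of $V^n\gamma$. I would then argue that the complementary readouts pick one consistent interpretation, making the final readout exact. Pinning this down carefully is the crux of the argument.

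\textbf{Step 4: assembly.} Compute $d$ and $r$ from $t$ by a fixed CPwL map; $t$ is bounded, so this is a finite piecewise construction. Run $n$ identical fixed-width blocks, then apply a final readout contracting $y_n$ against $\Phi(\theta_n)$. The width depends only on $M,p,L,m$ and the depth is $C_1 n$. Outside $[0,L]$, the output is forced to zero using the support property.
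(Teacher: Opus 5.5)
Steps 1--2 of your proposal match the paper: the cascade identity, and exact loop transport $z_n(x)=E(R^n(x))$ by a fixed CPwL map. Steps 3--4, however, have a genuine gap, and it sits where you locate the crux.

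\textbf{Order of operations.} You first run the $n$ selector blocks and only at the end contract $y_n$ against $\Phi(\theta_n)$. That contraction is a bilinear product of two network-computed real quantities. Here $y_n$ takes exponentially many distinct values, and $\Phi(\theta_n)$ ranges over a continuum. No fixed-size ReLU readout realizes this product exactly, since your gadget $\chi\cdot z$ is exact only for $\chi\in\{0,1\}$. The paper reverses the order. It first runs the controller $n$ steps to obtain the scalar factor $h(R^n(x))$ exactly. It then restarts the controller from $E(x)$ and runs the adjoint recursion $\Phi_0=h(R^n(x))e_\ell$, $\Phi_j=\widehat M_j(x)^{\top}\Phi_{j-1}$. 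Every product is then selector times vector, and the output $u_\mu^{\top}\Phi_n$ is a fixed linear readout.

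\textbf{Seam handling.} Your seam argument does not close.
\begin{itemize}
\item Continuous selectors are necessarily fractional on a whole neighbourhood of each digit boundary $k/M$, i.e.\ whenever the next state is near the seam. On that neighbourhood the block computes neither branch of the carry.
\item Continuity of $V^n\gamma$ identifies the two branches only at the boundary point itself, not on a neighbourhood.
\item The complementary readouts serve only the scalar factor in the paper; they do not select a branch.
\end{itemize}
The missing idea is an $n$-dependent, one-sided transition width $\delta_n=\bar\delta\varrho M^{-(n+1)}$. Suppose $R^{j-1}(x)\in[k/M,k/M+\delta_n]$ for some $j\le n$. Then the orbit stays in the first digit cell, and $R^n(x)\le M^n\delta_n<\varrho$. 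Hence $h(R^n(x))=0$, the adjoint recursion starts from $0$, and $\Pi_a(\lambda,0)=0$ keeps it at $0$ for every $\lambda$. Your gadget has this zero-preserving property too, but it only helps once the scalar factor is placed first. A width independent of $n$ fails, because the residual moves away from the seam by a factor $M$ per step.

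\textbf{Atoms.} The argument above needs atoms supported in $[\varrho,1-\varrho]$, and your Step 1 does not supply them. A curve with $\gamma(k)\neq 0$ at some integer $k$ is not a sum of continuous atoms supported in unit subintervals. Even such atoms vanish only at the endpoints, not on a neighbourhood of the seam. The paper uses translated special hats together with translation covariance, $V^n(\gamma(\cdot-\delta))=(V^n\gamma)(\cdot-M^{-n}\delta)$.

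\textbf{Integer part.} The map $t\mapsto\lfloor t\rfloor$ is not continuous, so $d$ cannot be produced by a CPwL map as Step 4 asserts. The paper instead evaluates the vectorized blocks at the ramps $\sigma_k(t)$ and glues them by telescoping.
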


At a secondary level of bookkeeping, the proof also yields crude exponential bounds on the magnitudes of the network weights and biases.

Beyond \Cref{thm:intro-homogeneous}, the extended version records several reductions and
applications.  Affine and stage-dependent forcing rules are treated by a finite-sum reduction from
the homogeneous theorem: their iterates are expanded into homogeneous pieces, and
\Cref{thm:intro-homogeneous} is applied term by term.  Under the corresponding CPwL
finite-complexity assumptions on the forcing terms, this gives exact fixed-width ReLU realizations
with quadratic depth \(O(n^2)\).

Open-curve constructions are handled by subtracting an anchor profile.  The resulting compactly
supported defect satisfies an affine recursion whose forcing term is the anchor mismatch.  Thus,
except in the special case where the anchor mismatch vanishes, anchored open curves naturally fall
under the affine \(O(n^2)\)-depth corollary rather than the homogeneous \(O(n)\)-depth theorem.

The extended version also records finite-state stacking reductions and additional geometric
constructions.  Recursive polygonal rules are translated into homogeneous, affine, or
stage-dependent refinement form, giving examples such as dragon-, Koch-, Gosper-, Morton-, and
Hilbert-type generators.  The forcing-free closed or endpoint-compatible homogeneous cases fall
under the linear-depth theorem; the anchored, affine, or stage-dependent variants fall under the
quadratic-depth finite-sum reductions.

\subsection{Structure of the paper}

The paper is organized as follows.
Section~\ref{sec:preliminaries} fixes notation and records the basic identities behind the cascade formalism.
Section~\ref{sec:homogeneous-extension} contains the core contribution of the paper: the exact loop controller, the scalar and matrix readouts, the recursive realization on the unit interval, and the passage from atomic curves to arbitrary compactly supported CPwL curves.
Section~\ref{sec:corollaries-applications} collects the main corollaries and applications, including homogeneous finite-state systems, anchored constructions, black-box affine and stage-dependent reductions, and geometric examples.
We end with a brief conclusion.

\section{Preliminaries and notation}
\label{sec:preliminaries}

This section fixes notation and records the basic identities behind the cascade formalism.
The underlying logic is the same as in the scalar binary treatment of \cite{source}, but is recast
here for the homogeneous \(M\)-ary vector-valued setting.
We begin with the \(M\)-ary digit and residual maps, then introduce the vectorization and block
transition matrices, and finally record the one-step and iterated cascade identities together with
the special-hat decomposition and translation covariance that will be used in the proof of the main theorem.

\subsection{Network classes and CPwL functions}

Following \cite{source,approx,nnapprox},
for integers \(W,L,d,N\ge 1\), we write
\[
\Ups_{W,L}(\ReLU;d,N)
\]
for the set of outputs of fully connected ReLU networks with width \(W\), depth \(L\), input
dimension \(d\), and output dimension \(N\).
Only the one-dimensional input case \(d=1\) is used in this paper, but the output dimension varies.

We write \emph{CPwL} for \emph{continuous piecewise linear}.
Thus a function \(f:\R\to\R^N\) is CPwL if each component is continuous and piecewise affine with
finitely many breakpoints on every compact interval.
For compactly supported CPwL maps, this is equivalent to the existence of a finite partition of the
support into intervals on each of which \(f\) is affine.

\begin{remark}
\label{rem:cpwl-basic}
We use repeatedly the standard facts that:
\begin{enumerate}[label=(\roman*),leftmargin=2em]
    \item every scalar CPwL function with finitely many breakpoints is an output of a
    one-hidden-layer ReLU network of width proportional to the number of affine pieces;
    \item compositions of CPwL functions are again CPwL;
    \item translating the input of a ReLU network by an affine map changes neither the asymptotic
    width nor the depth;
    \item finite sums of fixed-width, depth-\(O(n)\) networks can be realized by a network of
    width enlarged by a constant factor and depth still \(O(n)\).
\end{enumerate}
When coefficient bounds are tracked, we only use crude exponential estimates in \(n\).
In particular, the complexity measure in this paper is primarily architectural, namely width and
depth, while bounds on weights and biases are recorded separately and only at a coarse level.
\end{remark}

\subsection{\(M\)-ary digit maps and residuals}
\label{ss:digits-residuals}

Fix an integer \(M\ge 2\).
For \(x\in[0,1)\), we define the digit map
\begin{equation}
\label{eq:digit-map}
Q(x):=\lfloor Mx\rfloor\in\{0,1,\dots,M-1\},
\end{equation}
and the associated residual map
\begin{equation}
\label{eq:residual-map}
R(x):=Mx-Q(x)=Mx-\lfloor Mx\rfloor.
\end{equation}
Thus \(Q\) selects the first digit in the preferred base-\(M\) expansion, and
\(R(x)\in[0,1)\) for \(x\in[0,1)\).
For later convenience, we extend these definitions to the endpoint \(x=1\) by continuity, setting
\(R(1)=1\) and \(Q(1)=M-1\).
Accordingly, we regard \(R\) and \(Q\) as maps on \([0,1]\), with the usual formulas on \([0,1)\)
and this endpoint convention at \(x=1\).

On \([0,1)\), the formula \(R(x)=Mx-\lfloor Mx\rfloor\) is the standard mod-\(1\) residual map, and
hence induces the usual circle dynamics on \(\R/\Z\). Our convention \(R(1)=1\) is simply the
closed-interval representative of the seam point \(0\sim 1\). This point of view is not merely
formal: it underlies the exact loop-based residual controller introduced later in
Section~\ref{subsec:section3-loop-controller}.
Figure~\ref{fig:digit-res} illustrates the first two digit and residual maps in the ternary case, including the endpoint convention at the breakpoints.

\begin{figure}[ht]
\centering
\includegraphics[width=.7\textwidth]{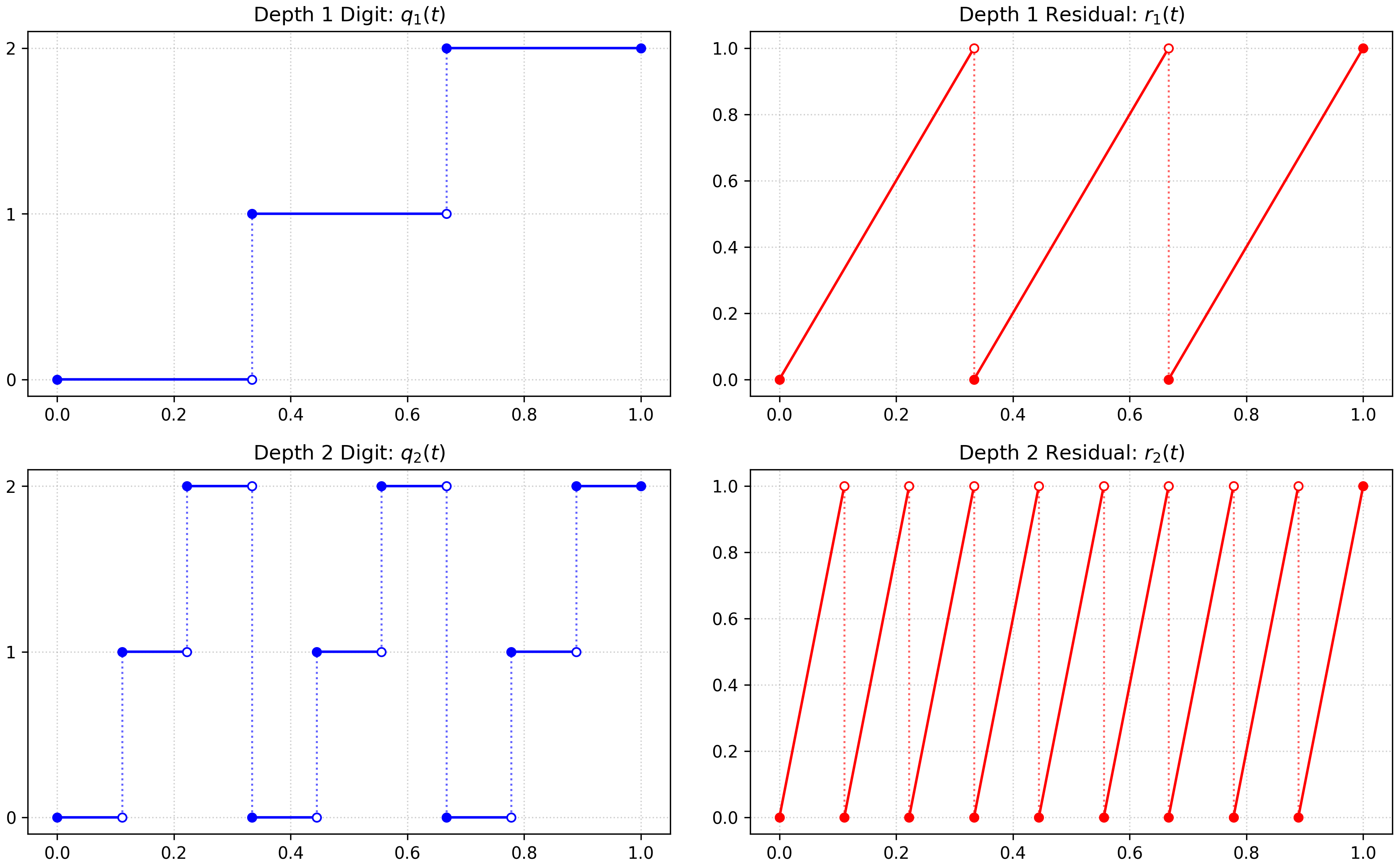}
\caption{First and second ternary digit and residual maps.}
\label{fig:digit-res}
\end{figure}

The successive digits and residuals are defined for \(j\ge 1\) by
\begin{equation}
\label{eq:digits-residuals}
q_j(x):=Q(R^{j-1}(x)),
\qquad
R^j(x):=R\circ R^{j-1}(x),
\end{equation}
with \(R^0(x)=x\).
Since \(R([0,1])\subset[0,1]\), these iterates are well defined for every \(x\in[0,1]\).
Then every \(x\in[0,1]\) has the preferred base-\(M\) expansion
\begin{equation}
\label{eq:M-ary-expansion}
x=\sum_{j=1}^{\infty} q_j(x) M^{-j},
\end{equation}
with the usual ambiguity at \(M\)-ary rationals resolved by the above convention.

\begin{remark}
\label{rem:affine-form-of-residual-iterates}
For each \(n\ge 1\), and on each \(M\)-ary cell
\[
\Big[\frac{\ell}{M^n},\frac{\ell+1}{M^n}\Big),
\qquad \ell=0,1,\dots,M^n-1,
\]
the iterate \(R^n\) is affine:
\[
R^n(x)=M^n x-\ell.
\]
At the endpoint \(x=1\), the same formula holds with \(\ell=M^n-1\), giving \(R^n(1)=1\).
\end{remark}

\subsection{The homogeneous operator and support convention}

Fix matrices \(A_j\in\R^{p\times p}\), \(j\in\mathbb Z\), and assume that \(A_j=0\) for all but finitely
many \(j\).
Define the homogeneous refinement operator
\begin{equation}
\label{eq:homogeneous-operator}
(V\gamma)(t)=\sum_{j\in\mathbb Z} A_j\gamma(Mt-j).
\end{equation}
Throughout most of the paper we work with compactly supported curves \(\gamma:\R\to\R^p\).
Fix an integer \(L\ge 1\) such that
\begin{equation}
\label{eq:support-convention}
\supp\gamma\subset [0,L].
\end{equation}
We assume that the operator data are chosen so that \(V\) preserves this support window, namely
\begin{equation}
\label{eq:support-preserve}
\supp(V\gamma)\subset [0,L] ,
\end{equation}
whenever \eqref{eq:support-convention} holds.

\subsection{Vectorization}

For \(k=1,\dots,L\), define the \(k\)th block of \(\gamma\) by
\begin{equation}
\label{eq:block-definition}
\gamma_k(x):=\gamma(x+k-1), \qquad x\in[0,1].
\end{equation}
The vectorization of \(\gamma\) is then the function
\begin{equation}
\label{eq:vectorization}
G(x):=\vect(\gamma)(x):=\bigl(\gamma_1(x),\gamma_2(x),\dots,\gamma_L(x)\bigr)^\transpose\in\R^{pL},
\qquad x\in[0,1].
\end{equation}
Thus \(G\) records all unit-interval pieces of \(\gamma\) simultaneously, reparameterized to \([0,1]\).
Then for \(n\ge0\), the $n$-th iterate $V^n\gamma$ is encoded by
\begin{equation}
\label{eq:Gn-definition}
G^n(x)=\vect(V^n\gamma)(x)\in\R^{pL}.
\end{equation}
Note that the special case $n=0$ reduces to $G^0=G$.

\begin{remark}
\label{rem:vectorization-geometry}
The vectorization operator is the correct analogue, in the present setting, of the formalism used in \cite{source}.
It packages all translated pieces of the curve into one state vector.
The refinement step then becomes multiplication by a block matrix selected by the current
\(M\)-ary digit.
\end{remark}

\subsection{Block transition matrices}
\label{ss:block-transition}

For each digit value \(q\in\{0,1,\dots,M-1\}\), define the block transition matrix
\(T_q\in\R^{pL\times pL}\)
by declaring its \((k,\ell)\)-block, of size \(p\times p\), to be
\begin{equation}
\label{eq:Tq-block}
(T_q)_{k\ell}:=A_{\,q+M(k-1)-(\ell-1)},
\qquad 1\le k,\ell\le L.
\end{equation}
Here we understand \(A_j=0\) whenever the corresponding shift is absent.

The point of this definition is that on the cell where the first digit is \(q\), each argument
\(Mt-j\) differs from the common residual variable \(R(t)\) by an integer shift; the matrix
\(T_q\) records which translated block of \(\gamma\) contributes to which translated block of
\(V\gamma\).

\subsection{The cascade identity}
\label{ss:cascade-identity}

The next proposition is the vector-valued \(M\)-ary version of the standard one-step cascade
relation.
It is the starting point for all later recursive constructions.

\begin{proposition}[One-step cascade identity]
\label{prop:one-step-cascade}
For every \(x\in[0,1]\), we have
\begin{equation}
\label{eq:one-step-cascade}
G^1(x)=T_{q_1(x)}G(R(x)).
\end{equation}
\end{proposition}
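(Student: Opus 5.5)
We verify the identity blockwise by direct computation from the definitions. Fix \(x\in[0,1]\) and a block index \(k\in\{1,\dots,L\}\). By \eqref{eq:block-definition} and \eqref{eq:homogeneous-operator},
\[
G^1_k(x)=(V\gamma)(x+k-1)=\sum_{j\in\Z}A_j\,\gamma\bigl(Mx+M(k-1)-j\bigr).
\]
Write \(Mx=q+r\) with \(q=q_1(x)=Q(x)\) and \(r=R(x)\in[0,1]\), using the endpoint convention \(Q(1)=M-1\), \(R(1)=1\) at \(x=1\). The argument becomes \(r+q+M(k-1)-j\).

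Reindex the sum by \(\ell-1:=q+M(k-1)-j\), i.e.\ \(j=q+M(k-1)-(\ell-1)\), so the argument is \(r+\ell-1\) with \(\ell\in\Z\). Since \(\supp\gamma\subset[0,L]\) and \(r\in[0,1]\), the term \(\gamma(r+\ell-1)\) can be nonzero only if \(r+\ell-1\in[0,L]\), i.e.\ \(1-r\le\ell\le L+1-r\).

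The main point needing care is the boundary terms. If \(r\in(0,1)\), only \(\ell\in\{1,\dots,L\}\) survive. If \(r=0\), the index \(\ell=L+1\) gives \(\gamma(L)\), which vanishes by continuity of \(\gamma\) with support in \([0,L]\). If \(r=1\), the index \(\ell=0\) gives \(\gamma(0)=0\) for the same reason. In every case, therefore, only \(1\le\ell\le L\) contributes, and \(\gamma(r+\ell-1)=\gamma_\ell(r)\).

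Hence
\[
G^1_k(x)=\sum_{\ell=1}^{L}A_{q+M(k-1)-(\ell-1)}\,\gamma_\ell(R(x))=\sum_{\ell=1}^L (T_{q_1(x)})_{k\ell}\,\gamma_\ell(R(x))
\]
by \eqref{eq:Tq-block}, with absent shifts giving zero matrices. Stacking over \(k\) yields \eqref{eq:one-step-cascade}.

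The only potential obstacle is the seam or endpoint bookkeeping: checking that the convention \(R(1)=1\), \(Q(1)=M-1\) still satisfies \(Mx=Q(x)+R(x)\), and that continuity kills \(\gamma(0)\) and \(\gamma(L)\). Note that the support-preservation hypothesis \eqref{eq:support-preserve} is not needed for this identity, since \(G^1\) only samples \(V\gamma\) on \([0,L]\).
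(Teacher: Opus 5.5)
Your proposal is correct and takes essentially the same route as the paper: substitute $Mx=q_1(x)+R(x)$, reindex by $\ell=q+M(k-1)-j+1$, and discard the indices $\ell\notin\{1,\dots,L\}$. Your handling of the boundary indices ($\ell=L+1$ when $R(x)=0$, and $\ell=0$ when $R(x)=1$) via $\gamma(0)=\gamma(L)=0$ by continuity is slightly more careful than the paper's blanket remark that such arguments ``lie outside the support window.''
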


\begin{proof}
Fix \(x\in[0,1]\) and set \(q:=q_1(x)=Q(x)\).
For each block index \(k\in\{1,\dots,L\}\),
\[
(G^1)_k(x)=(V\gamma)(x+k-1)=\sum_{j\in\mathbb Z} A_j\gamma(Mx+M(k-1)-j).
\]
Since \(Mx=R(x)+q\), this becomes
\[
(G^1)_k(x)=\sum_{j\in\mathbb Z} A_j\gamma\bigl(R(x)+q+M(k-1)-j\bigr).
\]
For a fixed \(j\), let
\(\ell:=q+M(k-1)-j+1\).
If \(\ell\in\{1,\dots,L\}\), then by definition of the blocks,
\[
\gamma\bigl(R(x)+q+M(k-1)-j\bigr)=\gamma_\ell(R(x))=G_\ell(R(x)).
\]
If \(\ell\notin\{1,\dots,L\}\), then the corresponding argument lies outside the support window and
the contribution vanishes.
Therefore we infer
\[
(G^1)_k(x)=\sum_{\ell=1}^{L} A_{\,q+M(k-1)-(\ell-1)}\,G_\ell(R(x))
=\sum_{\ell=1}^{L} (T_q)_{k\ell} G_\ell(R(x)),
\]
which is exactly the \(k\)th block of \(T_qG(R(x))\).
Since \(k\) was arbitrary, \eqref{eq:one-step-cascade} follows.
\end{proof}

\begin{corollary}[Iterated cascade identity]
\label{cor:iterated-cascade}
For every \(n\ge 1\) and every \(x\in[0,1]\),
\begin{equation}
\label{eq:iterated-cascade}
G^n(x)=T_{q_1(x)}T_{q_2(x)}\cdots T_{q_n(x)}\,G(R^n(x)).
\end{equation}
\end{corollary}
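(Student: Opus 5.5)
We argue by induction on \(n\), with \Cref{prop:one-step-cascade} as the only input. The case \(n=1\) is exactly \eqref{eq:one-step-cascade}, since \(R^1=R\).

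For the inductive step, assume \eqref{eq:iterated-cascade} holds for some \(n\ge1\), for every compactly supported CPwL curve supported in \([0,L]\). We apply it not to \(\gamma\) but to \(V\gamma\). This is legitimate because \eqref{eq:support-preserve} gives \(\supp(V\gamma)\subset[0,L]\). Since \(V^{n+1}\gamma=V^n(V\gamma)\), and the vectorization of \(V\gamma\) is \(G^1\), the hypothesis yields
\[
G^{n+1}(x)=T_{q_1(x)}\cdots T_{q_n(x)}\,G^1(R^n(x)),\qquad x\in[0,1].
\]
The transition matrices \(T_q\) depend only on the \(A_j\), \(M\), \(L\), not on the curve, so they are the same for \(\gamma\) and \(V\gamma\). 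We then set \(y:=R^n(x)\in[0,1]\) and apply \Cref{prop:one-step-cascade} at \(y\):
\[
G^1(y)=T_{Q(y)}\,G(R(y)).
\]
By \eqref{eq:digits-residuals}, \(Q(R^n(x))=q_{n+1}(x)\) and \(R(R^n(x))=R^{n+1}(x)\). Substituting these gives \eqref{eq:iterated-cascade} with \(n+1\) in place of \(n\).

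There is no real obstacle; only two bookkeeping points need checking. The first is that \(R\) maps \([0,1]\) into \([0,1]\), including the endpoint convention \(R(1)=1\), \(Q(1)=M-1\). This ensures \(y\in[0,1]\), so \Cref{prop:one-step-cascade}, which was proved for every \(x\in[0,1]\) including \(x=1\), applies at \(y\). The second is that the induction must be phrased over all admissible curves, not just the fixed \(\gamma\), so that the hypothesis can be applied to \(V\gamma\).

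Alternatively, one can peel off the first factor instead of the last. Apply \Cref{prop:one-step-cascade} to the curve \(V^{n}\gamma\) to get \(G^{n+1}(x)=T_{q_1(x)}G^{n}(R(x))\). Then use the hypothesis at \(R(x)\), together with the shift identities \(q_j(R(x))=q_{j+1}(x)\) and \(R^n(R(x))=R^{n+1}(x)\). Either route is a one-line verification.
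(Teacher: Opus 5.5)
Your proof is correct and is essentially the paper's argument. The paper's proof is exactly your alternative route: it applies \Cref{prop:one-step-cascade} to \(V^n\gamma\) to get \(G^{n+1}(x)=T_{q_1(x)}G^n(R(x))\), then invokes the hypothesis for the same \(\gamma\) at \(R(x)\) via the shift identities \(q_j(R(x))=q_{j+1}(x)\).

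Your primary route peels off the last factor instead. It trades those shift identities for an induction hypothesis quantified over all curves supported in \([0,L]\), which you apply to \(V\gamma\) using support preservation. After that, the digit bookkeeping is just the definition \(q_{n+1}=Q\circ R^n\).
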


\begin{proof}
We argue by induction on \(n\).
For \(n=1\), the statement is exactly \Cref{prop:one-step-cascade}.
Assume the formula holds for some \(n\ge 1\). Then we have
\[
G^{n+1}(x)=\vect(V^{n+1}\gamma)(x)=\vect\bigl(V(V^n\gamma)\bigr)(x).
\]
Applying \Cref{prop:one-step-cascade} to the curve \(V^n\gamma\), we obtain
\[
G^{n+1}(x)=T_{q_1(x)}\,G^n(R(x)).
\]
Now apply the induction hypothesis at the point \(R(x)\), to get
\[
G^n(R(x))
=
T_{q_1(R(x))}T_{q_2(R(x))}\cdots T_{q_n(R(x))}\,G(R^n(R(x))).
\]
By definition of the digits, we infer
\[
q_j(R(x))=Q(R^{j-1}(R(x)))=Q(R^j(x))=q_{j+1}(x),
\qquad j=1,\dots,n.
\]
Also, we have \(R^n(R(x))=R^{n+1}(x)\), and thus
\[
G^n(R(x))
=
T_{q_2(x)}T_{q_3(x)}\cdots T_{q_{n+1}(x)}\,G(R^{n+1}(x)).
\]
Substituting this into the previous formula gives
\[
G^{n+1}(x)
=
T_{q_1(x)}T_{q_2(x)}\cdots T_{q_{n+1}(x)}\,G(R^{n+1}(x)),
\]
which is exactly \eqref{eq:iterated-cascade} with \(n+1\) in place of \(n\).
\end{proof}

\subsection{Special hats and atomic curves}

Fix once and for all \(0<\mar<\frac12\).
The specific value of \(\mar\) is not important.
Its role is simply to keep the support of the distinguished hat away from the endpoints \(0\) and
\(1\).

\begin{definition}[Special-hat]
\label{def:special-hat}
A nonnegative CPwL function \(h:\mathbb R\to\mathbb R\) is called a \emph{special-hat} if
\[
\supp h \subset[\mar,1-\mar] .
\]
\end{definition}

For \(\mu\in\{1,\dots,p\}\), let \(e_\mu\in\R^p\) be the $\mu$th standard coordinate vector.
An \emph{atomic curve} is a curve of the form
\begin{equation}
\label{eq:special-basis-curve}
\gamma(t)=h(t)e_\mu,
\end{equation}
where \(h\) is a special hat.
For such curves the vectorization simplifies drastically: on \([0,1]\) one has
\[
G(x)=h(x)u_\mu,
\]
where \(u_\mu\in\R^{pL}\) is the coordinate vector corresponding to the \(\mu\)th coordinate of the
first block.
This is the point at which the one-dimensional residual problem and the matrix-valued cascade
separate.

\subsection{Atomic-curve decomposition}
\label{ss:special-hat-decomp}

To pass from atomic curves to arbitrary compactly supported CPwL data, we use the standard
nodal decomposition into translated hats.

\begin{lemma}[Finite atomic-curve decomposition]
\label{lem:finite-hat-decomposition}
Every compactly supported CPwL map \(F:\mathbb R\to\mathbb R^p\) can be written as a finite sum
\[
F(t)=\sum_{\nu=1}^{N_*} a_\nu\, h_\nu(t-\delta_\nu)\,v_\nu,
\]
where \(a_\nu\in\mathbb R\), \(\delta_\nu\in\mathbb R\), \(v_\nu\in\mathbb R^p\), and each \(h_\nu\) is a
special-hat with at most three breakpoints.
In particular, every compactly supported CPwL curve is a finite linear combination of translated atomic curves.
\end{lemma}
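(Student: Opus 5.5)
The plan is to use the standard nodal (hat-function) basis for CPwL functions, and then rescale each nodal hat so that its support lies inside $[\mar,1-\mar]$. First I would reduce to scalar components. Writing $F=\sum_{\mu=1}^p F_\mu e_\mu$, it suffices to decompose each scalar compactly supported CPwL function $F_\mu$, taking $v_\nu=e_\mu$. So fix a scalar compactly supported CPwL $f$. Its support lies in some compact interval, and $f$ has finitely many breakpoints $x_0<x_1<\dots<x_K$. I will choose these so that $f\equiv 0$ outside $[x_0,x_K]$ and $f(x_0)=f(x_K)=0$, which is possible by compact support and continuity. For $1\le i\le K-1$, let $\phi_i$ be the nodal hat with breakpoints $x_{i-1},x_i,x_{i+1}$: it is zero outside $[x_{i-1},x_{i+1}]$, affine on each subinterval, and equal to $1$ at $x_i$. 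The standard interpolation identity gives
\[
f=\sum_{i=1}^{K-1} f(x_i)\,\phi_i .
\]
To check it, note that both sides are CPwL with breakpoints contained in $\{x_i\}$, they agree at every node (including $x_0$ and $x_K$, where both vanish), and both vanish outside $[x_0,x_K]$.

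The only real issue is that a special-hat must have support inside $[\mar,1-\mar]$, so its support has length at most $1-2\mar$. A nodal hat $\phi_i$ may have a wider support. I see two fixes. The first is to refine the partition by inserting extra nodes until every gap $x_{i+1}-x_{i-1}$ is less than $1-2\mar$; the identity above still holds, since $f$ remains affine between the new nodes. The second is to allow a dilation. The statement only permits translations $h_\nu(t-\delta_\nu)$, so refinement is the right choice.

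After refinement, I set $\delta_i:=x_{i-1}-\mar$ and $h_i(s):=\phi_i(s+\delta_i)$. Then $h_i$ is nonnegative and CPwL, and its support is $[\mar,\mar+(x_{i+1}-x_{i-1})]\subset[\mar,1-\mar]$. It has exactly three breakpoints, so it is a special-hat with at most three breakpoints, and $\phi_i(t)=h_i(t-\delta_i)$. Taking $a_\nu=f(x_i)$ and collecting over all components $\mu$ gives the finite sum. The final assertion follows immediately, because each term $h_\nu(t-\delta_\nu)v_\nu$ is a linear combination of translated atomic curves $h_\nu(\cdot-\delta_\nu)e_\mu$. The main obstacle is only the support-length constraint, and the refinement step handles it. Everything else is the routine check that two CPwL functions sharing nodes and nodal values coincide.
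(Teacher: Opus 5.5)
Your proposal is correct and follows essentially the same route as the paper. You expand each coordinate in nodal hats on a grid refined so that every hat has support length below $1-2\varrho$, then translate each hat into $[\varrho,1-\varrho]$ to get a special-hat with three breakpoints; the paper uses one common grid for all coordinates rather than treating each component separately with $v_\nu=e_\mu$, which makes no difference.
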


\begin{proof}
Choose a finite breakpoint set containing all breakpoints of all coordinate functions of \(F\), and
refine it further if necessary so that every nodal hat on that grid has support length strictly less
than \(1-2\mar\).
Each scalar coordinate of \(F\) is then a finite linear combination of nodal hats on that refined grid.
For each such nodal hat \(\psi\), the support-length condition allows one to choose a translation
\(\delta\) so that the shifted function \(h(t):=\psi(t+\delta)\) satisfies
\(\supp(h)\subset[\mar,1-\mar]\).
Then \(h\) is a special hat with at most three breakpoints, and
\(\psi(t)=h(t-\delta)\).
Collecting these finitely many terms over all coordinates gives the stated representation.
\end{proof}

\subsection{Translation covariance}
\label{ss:translation-covariance}

A small but extremely useful device is the translation covariance of the \emph{homogeneous}
refinement operator. 
It allows termwise application of \(V^n\) to the decomposition from the preceding subsection.

\begin{lemma}[Translation covariance for \(V\)]
\label{lem:translation-covariance}
Let \(\delta\in\R\) and define
\[
\widetilde{\gamma}(t):=\gamma(t-\delta).
\]
Then for every \(n\ge 1\),
\begin{equation}
\label{eq:translation-covariance}
V^n\widetilde{\gamma}(t)=\bigl(V^n\gamma\bigr)(t-M^{-n}\delta).
\end{equation}
\end{lemma}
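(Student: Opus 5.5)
The plan is to prove the case \(n=1\) by a direct change of variables in the definition \eqref{eq:homogeneous-operator}, and then obtain general \(n\) by induction, using the observation that \(V^n\widetilde\gamma\) is again a translate of a curve to which the \(n=1\) statement applies.

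\emph{Base case.} For \(n=1\), substitute \(\widetilde\gamma(s)=\gamma(s-\delta)\) into the definition. This gives
\[
(V\widetilde\gamma)(t)=\sum_{j\in\Z}A_j\gamma(Mt-j-\delta)=\sum_{j\in\Z}A_j\gamma\bigl(M(t-M^{-1}\delta)-j\bigr)=(V\gamma)(t-M^{-1}\delta).
\]
Only finitely many \(A_j\) are nonzero, so the sum is finite and there are no convergence issues. Note that \(V\) is defined pointwise for any curve, without reference to the support window, so this identity holds for an arbitrary \(\gamma\) and an arbitrary real shift \(\delta\).

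\emph{Inductive step.} Write \(\tau_\delta\gamma:=\gamma(\cdot-\delta)\). The base case says \(V\tau_\delta=\tau_{\delta/M}V\) as an identity of operators on all curves \(\R\to\R^p\). Now assume \(V^n\tau_\delta\gamma=\tau_{M^{-n}\delta}V^n\gamma\). Applying \(V\) to both sides and using the base case with the curve \(V^n\gamma\) and shift \(M^{-n}\delta\), we get
\[
V^{n+1}\tau_\delta\gamma=V\tau_{M^{-n}\delta}V^n\gamma=\tau_{M^{-n-1}\delta}V^{n+1}\gamma,
\]
which is \eqref{eq:translation-covariance} with \(n+1\) in place of \(n\).

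There is no real obstacle here. The only point needing care is that the base case must be stated for arbitrary curves, not only for those supported in \([0,L]\). In the induction, \(V^n\gamma\) is shifted by a non-integer amount, so the shifted curve generally leaves the support window. The argument therefore must not rely on the support-preservation assumption \eqref{eq:support-preserve}, and it does not.
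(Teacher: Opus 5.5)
Your proof is correct and takes the same route as the paper: the one-step identity by direct substitution into the definition of \(V\), followed by iteration. Your explicit induction simply spells out what the paper compresses into ``Iterating proves.'' The same holds for your remark that the one-step identity must hold for arbitrary curves, since intermediate translates need not stay in \([0,L]\).
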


\begin{proof}
For one step,
\[
(V\widetilde{\gamma})(t)
=
\sum_{j\in\mathbb Z}A_j\widetilde{\gamma}(Mt-j)
=
\sum_{j\in\mathbb Z}A_j\gamma(Mt-j-\delta)
=
(V\gamma)(t-M^{-1}\delta).
\]
Iterating proves \eqref{eq:translation-covariance}.
\end{proof}

\section{Exact loop controller and the homogeneous theorem}
\label{sec:homogeneous-extension}

\subsection{Overview}

In this section we prove \Cref{thm:intro-homogeneous}.
Throughout, we work with the ordinary \(M\)-ary digit and residual maps, the vectorization over the
\(L\) unit intervals \([k-1,k]\), and the corresponding block transition matrices
\(T_q\), \(q\in\{0,1,\dots,M-1\}\).
Accordingly, the vectorized state lives in \(\R^{pL}\).

The overall strategy remains close to that of \cite{source}: one isolates the scalar factor
\(h(R^n(x))\), combines it with the block transition matrices through a recursive cascade, and then
passes from atomic curves to general compactly supported CPwL curves.
The main difference lies in the treatment of the residual dynamics.
Rather than iterating scalar surrogate residuals, we encode the residual orbit
\(R(x),R^2(x),\dots\)
into an exact forward state sequence on a loop.
More precisely, we choose a polygonal loop \(\Gamma\subset\R^2\), a parametrization
\(E:[0,1]\to\Gamma\), and a CPwL update map \(F:\R^2\to\R^2\) such that, with
\[
z_0(x)=E(x),\qquad z_{n+1}(x)=F(z_n(x)),
\]
one has
\[
z_n(x)=E(R^n(x))
\]
for every \(n\ge 0\).
Thus the forward residual controller is exact. The only remaining ambiguity comes from the seam
of the loop, and this is handled at the readout stage by complementary scalar readouts from the
loop state. Away from complementary seam neighborhoods, these readouts agree with the identity,
while the remaining ambiguity is absorbed by the fact that the special-hat \(h\) is supported away
from the endpoints of \([0,1]\).

This loop-based formulation is geometrically natural: the residual map is most naturally viewed
as a circular dynamics, and in the homogeneous setting this viewpoint yields an exact controller for
the forward residual stream while leaving the matrix recursion and the final gluing argument in
essentially the same form as in the scalar binary theory. It does not produce a globally unambiguous
selector on the closed loop; rather, it replaces forward surrogate iteration by exact loop transport
and confines the seam ambiguity to the terminal readout/selector stage.

The proof is carried out in five steps.

\begin{enumerate}[leftmargin=2em]
    \item In \S\ref{subsec:section3-loop-controller}, we construct the exact loop residual
    controller.

    \item In \S\ref{subsec:section3-scalar-readouts}, we introduce two scalar readouts from the loop
    state and use them to obtain an exact min-representation for \(h(R^n(\cdot))\).

    \item In \S\ref{subsec:section3-selectors}, we construct selector matrices from the loop state.
    These are CPwL matrix fields on the loop state space that recover the exact block transition
    matrices away from a small seam region.

    \item In \S\ref{subsec:section3-unit-interval}, we combine the scalar factor and the
    selector matrices by means of the product gadget to obtain a fixed-depth recursive realization
    block on \([0,1]\).

    \item In \S\ref{subsec:section3-global}, we pass from atomic curves to arbitrary
    compactly supported CPwL curves by atomic-curve decomposition, translation covariance, and
    finite summation.
\end{enumerate}

\subsection{Exact loop controller}
\label{subsec:section3-loop-controller}

We now construct an exact forward controller for the residual dynamics. The point is to encode the
residual orbit
\(x,R(x),R^2(x),\dots\)
into a forward state sequence on a loop, so that the controller state at stage \(n\) determines
\(R^n(x)\) exactly. This removes the need to iterate scalar surrogate residuals in the forward
dynamics. 

We begin by fixing a concrete loop model. Let
\[
a_0:=(0,0),\qquad a_1:=(1,1),\qquad a_2:=(1,0),
\]
and let \(\Delta\subset\R^2\) be the closed triangle with vertices \(a_0,a_1,a_2\). Denote by
\(\Gamma:=\partial \Delta\)
its boundary loop. We parametrize \(\Gamma\) by the piecewise affine map \(E:[0,1]\to\Gamma\)
defined by
\begin{equation}
\label{eq:triangle-parametrization}
E(t)=
\begin{cases}
(3t,3t), & 0\le t\le \frac13,\\[2mm]
(1,2-3t), & \frac13\le t\le \frac23,\\[2mm]
(3-3t,0), & \frac23\le t\le 1.
\end{cases}
\end{equation}
Then \(E\) is continuous, piecewise affine, satisfies
\(E(0)=E(1)=a_0\),
and is injective on \([0,1)\). We refer to \(a_0\) as the {\em seam point}.

Recall from \Cref{ss:digits-residuals} that the residual map is \(R(t)=Mt-\lfloor Mt\rfloor\) for \(t\in[0,1)\), with $R(1)=1$. 
We first define a loop map on \(\Gamma\) by transporting \(R\) through the parametrization \(E\).

\begin{lemma}[Loop controller map]
\label{lem:loop-controller-map}
There exists a CPwL map \(F:\R^2\to\R^2\) such that
\begin{equation}
\label{eq:F-on-loop}
F(E(t))=E(R(t)),\qquad t\in[0,1].
\end{equation}
Moreover, \(F\) can be chosen so that
\(F\in \Ups_{C_M,2}(\ReLU;2,2)\)
for a constant \(C_M\) depending only on \(M\), and the weights and biases of such a realization
are bounded in absolute value by a constant depending only on \(M\).
\end{lemma}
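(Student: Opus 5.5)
The plan is to define \(F\) on \(\Gamma\) by transport, check that this is well defined and continuous, and then extend it to all of \(\R^2\) by an explicit ridge-function formula that is visibly a one-hidden-layer ReLU network.

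\textbf{Step 1: transport.} On \(\Gamma\), set
\[
F|_\Gamma := E\circ R\circ E^{-1},
\]
using the inverse of \(E\) restricted to \([0,1)\). The only point where \(E\) fails to be injective is the seam \(a_0=E(0)=E(1)\). By the endpoint convention \(R(0)=0\) and \(R(1)=1\), so
\[
E(R(0))=E(R(1))=a_0 .
\]
Hence \eqref{eq:F-on-loop} is consistent at both parameter values representing the seam. Continuity of \(F|_\Gamma\) reduces to continuity of the \(1\)-periodic function \(\Phi:=E\circ R\), viewed on the circle \(\R/\Z\). This holds because \(R\) is continuous as a circle map of degree \(M\), and \(E\) identifies \(0\) with \(1\). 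Moreover, \(\Phi\) is piecewise affine in \(t\), with breakpoints contained in the finite set
\[
\tfrac{1}{3M}\Z\cap[0,1],
\]
namely the cells \(\ell/M\) of \(R\) together with the preimages of the vertices \(1/3\) and \(2/3\) of \(E\). So \(F|_\Gamma\) is a CPwL map on a finite polygonal \(1\)-complex, with at most \(3M\) pieces per edge.

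\textbf{Step 2: explicit one-hidden-layer extension.} I would write each component of \(F\) as a sum of ridge functions,
\[
F_i(u,v)=\alpha_i(u)+\beta_i(v)+\kappa_i(u-v),\qquad i=1,2,
\]
with \(\alpha_i,\beta_i,\kappa_i\) univariate CPwL functions whose breakpoints lie in \(\tfrac{1}{M}\Z\cap[0,1]\), made constant outside \([0,1]\). Such a formula automatically defines a CPwL map on \(\R^2\). Moreover, by \Cref{rem:cpwl-basic}(i), each ridge term is a single hidden layer of \(O(M)\) ReLUs applied to a fixed linear form of \((u,v)\). So the whole map lies in \(\Ups_{C_M,2}(\ReLU;2,2)\), and its coefficients are bounded by the slopes of \(\Phi\), which are of size \(3M\cdot 3\). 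On the three edges of \(\Gamma\) the linear forms restrict as follows:
\begin{itemize}
\item on \([a_0,a_1]\): \((u,v,u-v)=(3t,3t,0)\);
\item on \([a_1,a_2]\): \((u,v,u-v)=(1,\,2-3t,\,3t-1)\);
\item on \([a_2,a_0]\): \((u,v,u-v)=(3-3t,\,0,\,3-3t)\).
\end{itemize}
Matching \(F\circ E=\Phi\) on each edge is therefore a finite linear system for the breakpoint values of \(\alpha_i,\beta_i,\kappa_i\). Because each edge is parametrized by a variable that appears affinely in at least one ridge not already fixed elsewhere, I expect this system to be solvable. A natural order is: use \(\beta_i\) for the edge \([a_1,a_2]\), use \(\alpha_i\) for \([a_2,a_0]\) after correcting for \(\kappa_i\), and use \(\alpha_i+\beta_i\) along the diagonal edge \([a_0,a_1]\).

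\textbf{Main obstacle.} The hard part is precisely the compatibility of this system. The diagonal edge and the edge \([a_2,a_0]\) share the variable \(u\) through \(\alpha_i\), and \(\kappa_i\) is active on both of the non-diagonal edges. Solvability therefore reduces to consistency conditions at the three vertices, and these are exactly guaranteed by the seam identity \(\Phi(0)=\Phi(1)\) and continuity of \(\Phi\) at \(1/3\) and \(2/3\). If the triple \(u,v,u-v\) turns out to be too rigid, I would add a fourth ridge direction, \(u+v\) or \(v-u/2\), to decouple the edges; this still gives one hidden layer.

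\textbf{Fallback.} As a second fallback, I would triangulate a neighborhood of \(\Delta\), extend \(F|_\Gamma\) piecewise linearly (for instance by the constant value \(F(\text{centroid})\) at an interior vertex and radially outside), and invoke the general fact that a fixed CPwL map \(\R^2\to\R^2\) is realized by a ReLU network of bounded depth. This secures a bounded-size realization with constants depending only on \(M\), though the explicit ridge construction is what delivers the stated depth \(2\).
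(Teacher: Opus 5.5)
Your Step 1 is correct and is the same well-definedness and continuity check the paper performs. The gap is in Step 2. The ansatz \(F_i=\alpha_i(u)+\beta_i(v)+\kappa_i(u-v)\) cannot be solved in general, and the obstruction is not a vertex condition.

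Each ridge direction couples exactly two edges:
\(u\) couples the diagonal and bottom edges, \(v\) the diagonal and vertical edges, and \(u-v\) the vertical and bottom edges. These three couplings form a cycle, and that cycle forces a constraint along entire edges. Concretely, take \(M=3\) and write \(w=3t\) on \([a_0,a_1]\), \(s=3t-1\) on \([a_1,a_2]\), and \(w=3-3t\) on \([a_2,a_0]\). Then \(\Phi\) equals \(E(w)\), \(E(s)\), \(E(1-w)\) on these edges, and your system becomes
\[
\alpha(w)+\beta(w)+\kappa(0)=E(w),\qquad
\alpha(1)+\beta(1-s)+\kappa(s)=E(s),\qquad
\alpha(w)+\beta(0)+\kappa(w)=E(1-w).
\]
Eliminating \(\beta\) and \(\kappa\) gives \(\alpha(s)+\alpha(1-s)=2E(1-s)-E(s)+\mathrm{const}\). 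The left side is invariant under \(s\mapsto 1-s\), so you would need \(E(s)=E(1-s)\) for all \(s\). This fails: for example \(E(0.1)=(0.3,0.3)\neq(0.3,0)=E(0.9)\). So the seam identity and continuity at \(1/3,2/3\) do not guarantee compatibility. Adding a fourth fixed direction such as \(u+v\) is an unverified patch. A kink line of a fixed-direction ridge term that crosses \(\Delta\) generically meets \(\Gamma\) at two edge-interior points, so kinks propagate between edges, and you would still have to show they cancel.

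Your second fallback is essentially the paper's argument. The paper transports \(F_\Gamma\), extends it over the fan triangulation of \(\Delta\) from an interior point \(c\) (with boundary vertices at all breakpoints), and adds finitely many exterior pieces. It then applies the ReLU--finite-element representation theorem componentwise; this realizes such a two-dimensional CPwL function exactly with two hidden layers, which is the general \(\lceil\log_2(d+1)\rceil\)-hidden-layer bound at \(d=2\). That theorem is the ingredient missing from your write-up: you stopped at ``bounded depth'' and relied on the failed ridge construction to get depth \(2\).

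If you want a self-contained one-hidden-layer proof instead, let the directions depend on the breakpoints. For each breakpoint \(b\) of \(F_\Gamma\) in the relative interior of an edge, take a neuron \(\ReLU(\ell_b)\) whose zero line passes through \(b\) and the opposite vertex of \(\Delta\). This neuron is affine on the two edges through that vertex and has a single kink at \(b\) on the remaining edge. Subtracting suitable multiples therefore removes every kink. What remains is continuous and affine on each edge, hence equals on \(\Gamma\) the unique affine map matching its values at \(a_0,a_1,a_2\). This gives width \(O(M)\), one hidden layer (so depth \(2\) after an identity layer), and coefficients bounded in terms of \(M\).
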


\begin{proof}
Define first a map \(F_\Gamma:\Gamma\to\Gamma\) by
\(F_\Gamma(E(t))=E(R(t))\) for \(t\in[0,1)\).
This is well defined. Indeed, the only nontrivial identification in the parametrization \(E\) is
\(E(0)=E(1)\), and at that point the right-hand side is consistent because
\[
E(R(0))=E(0)=a_0=E(1)=E(R(1)).
\]

Next, \(F_\Gamma\) is continuous and piecewise affine on \(\Gamma\).
Indeed, \(R\) is affine on each interval
\[
\Big[\frac{k}{M},\frac{k+1}{M}\Big),\qquad k=0,1,\dots,M-1,
\]
and \(E\) is affine on each of the three subintervals
\([0,\frac13]\), \([\frac13,\frac23]\), and \([\frac23,1]\).
Therefore \(E\circ R\) is affine on each member of a finite partition of \([0,1)\) obtained by
refining these intervals.

It remains to check compatibility at the partition points. If \(t_0\) is a breakpoint coming only
from \(E\), then continuity is immediate because \(R\) is continuous at \(t_0\) and \(E\) is continuous.
If \(t_0=\frac{k}{M}\) for some \(1\le k\le M-1\), then
\[
\lim_{t\to t_0^-}E(R(t))=E(1)=a_0=E(0)=E(R(t_0)).
\]
Finally, at the endpoint \(t=1\), the value is already consistent with the seam because
\[
E(R(1))=E(1)=a_0=E(0).
\]
Thus the affine pieces match continuously at every breakpoint. Transporting the partition through
\(E\), we obtain a finite partition of \(\Gamma\) into line segments on each of which \(F_\Gamma\) is affine.

We now extend \(F_\Gamma\) from \(\Gamma\) to a CPwL map on all of \(\mathbb R^2\).
Choose a point \(c\in\operatorname{int}\Delta\). Refine the boundary loop \(\Gamma\)
by all breakpoints of \(F_\Gamma\), and write the resulting boundary vertices as
\(b_i=E(t_i)\), ordered cyclically. Triangulate \(\Delta\) by the fan triangles
\([c,b_i,b_{i+1}]\).
Choose any value \(F(c)\in\mathbb R^2\), and extend \(F\) affinely on each fan
triangle. On each refined boundary edge \([b_i,b_{i+1}]\), this extension agrees with
\(F_\Gamma\), because both maps are affine there and agree at the endpoints. Finally,
extend outside \(\Delta\) by any fixed finite CPwL extension, for instance by adding
finitely many exterior triangles around \(\Delta\). 
This gives a global CPwL map \(F:\mathbb R^2\to\mathbb R^2\) satisfying
\eqref{eq:F-on-loop}.

The map just constructed has a number of affine pieces depending only on \(M\).
By the ReLU--finite-element representation theorem of \cite{relu-fem}, 
two-dimensional affine finite-element functions
admit exact ReLU realizations with two hidden layers. Applying this componentwise to
the two scalar components of \(F\), we obtain
\(F\in \Ups_{C_M,2}(\ReLU;2,2)\),
with \(C_M\) depending only on the finite fan refinement, hence only on \(M\). Since
the fan geometry is fixed once \(M\) is fixed, the weights and biases may also be
chosen with bounds depending only on \(M\).
\end{proof}

Only the restriction of \(F\) to \(\Gamma\) is used below. Indeed, \(z_0(x)=E(x)\in\Gamma\), and
\eqref{eq:F-on-loop} implies \(F(\Gamma)\subset\Gamma\); hence all iterates \(z_m(x)\)
remain on \(\Gamma\). Thus any global CPwL extension of \(F_\Gamma\) with the stated finite
complexity would serve equally well.

\begin{figure}[ht]
\centering
\includegraphics[width=.5\textwidth]{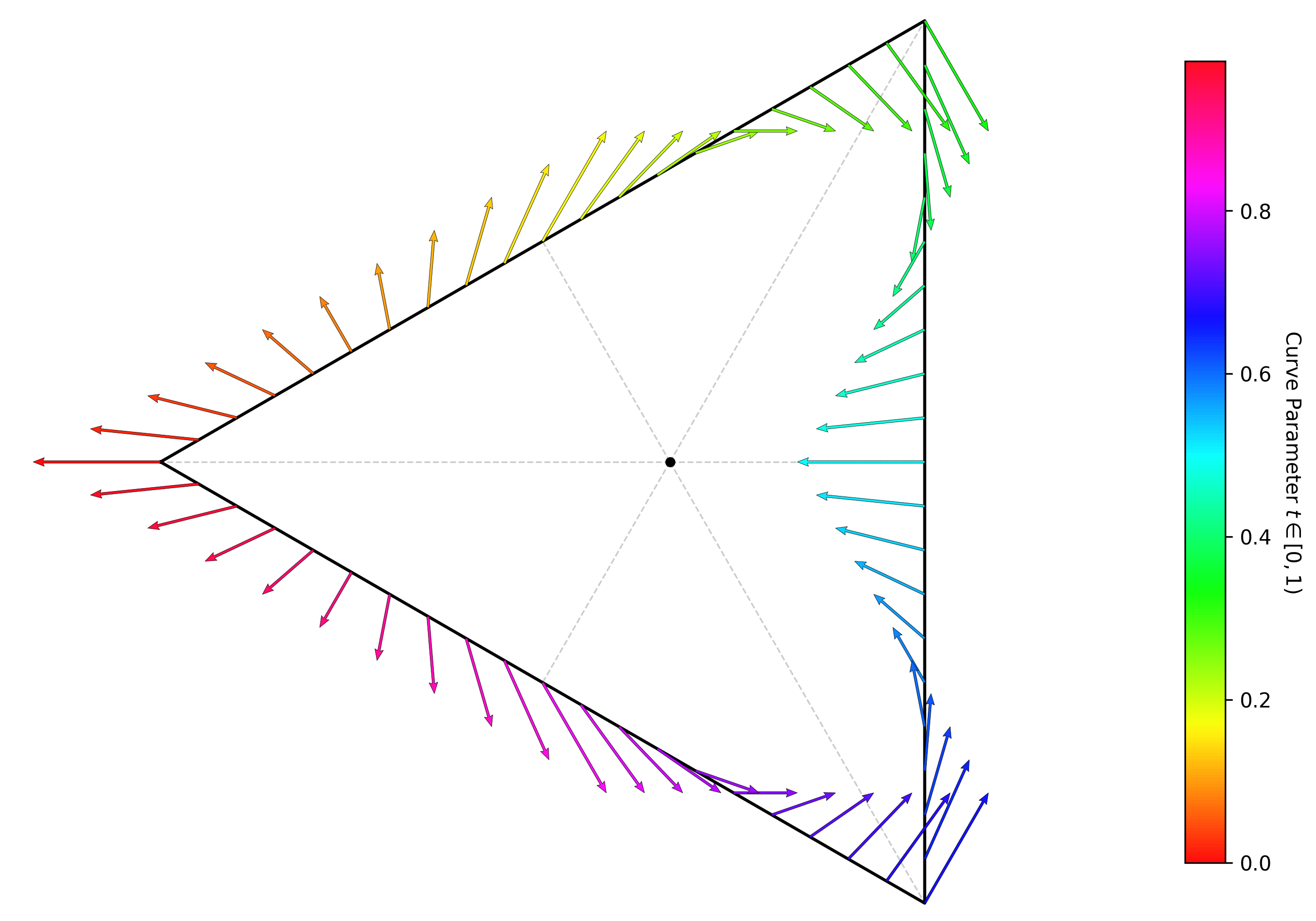}
\caption{The exact loop-controller boundary field for \(M=2\).
Arrows are based at points \(E(t)\in\Gamma\) and depict the vector value
\(F_\Gamma(E(t))=E(R(t))\). Colors encode \(t\in[0,1)\). The drawing uses an
equilateral triangle rather than the right triangle \eqref{eq:triangle-parametrization} for visual
symmetry.}
\label{fig:loop-vf}
\end{figure}

We now define the forward loop state recursively by
\begin{equation}
\label{eq:loop-state-recursion}
z_0(x):=E(x),\qquad z_{n+1}(x):=F(z_{n}(x)),\qquad n\ge0,\ x\in[0,1].
\end{equation}

\begin{proposition}[Exact forward loop controller]
\label{prop:exact-loop-controller}
For every \(n\ge 0\) and every \(x\in[0,1]\), we have
\begin{equation}
\label{eq:exact-loop-controller}
z_n(x)=E(R^n(x)).
\end{equation}
In particular, the loop state \(z_n(x)\) encodes \(R^n(x)\) exactly.
\end{proposition}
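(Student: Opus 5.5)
The plan is a direct induction on \(n\), using only the defining recursion \eqref{eq:loop-state-recursion} and the intertwining identity \eqref{eq:F-on-loop} of \Cref{lem:loop-controller-map}. The point to watch is that \eqref{eq:F-on-loop} is stated for every \(t\in[0,1]\), including the endpoint \(t=1\) with the convention \(R(1)=1\). Hence no case distinction at the seam is needed, provided each iterate \(R^n(x)\) stays in \([0,1]\).

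For the base case \(n=0\), we have \(z_0(x)=E(x)=E(R^0(x))\) by definition, since \(R^0\) is the identity.

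For the inductive step, assume \(z_n(x)=E(R^n(x))\) for all \(x\in[0,1]\). Set \(t:=R^n(x)\). By \Cref{ss:digits-residuals}, \(R([0,1])\subset[0,1]\), so \(t\in[0,1]\), and \eqref{eq:F-on-loop} applies at \(t\). This gives
\[
z_{n+1}(x)=F(z_n(x))=F(E(t))=E(R(t))=E(R^{n+1}(x)),
\]
where the last equality uses \(R\circ R^n=R^{n+1}\) from \eqref{eq:digits-residuals}.

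The only step needing care is the membership \(R^n(x)\in[0,1]\), including the seam value \(1\), which the endpoint convention \(R(1)=1\) covers. It also matters that \(F\) is evaluated only at points of \(\Gamma\), where it coincides with \(F_\Gamma\). This is automatic from the inductive hypothesis, since \(z_n(x)=E(R^n(x))\in\Gamma\). The exterior extension of \(F\) therefore never enters the argument, and the statement that \(z_n(x)\) encodes \(R^n(x)\) exactly follows.
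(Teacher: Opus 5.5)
Your proof is correct and is essentially the paper's argument: an induction on \(n\) with base case \(z_0=E=E\circ R^0\), and an inductive step that applies the intertwining identity \(F(E(t))=E(R(t))\) of \Cref{lem:loop-controller-map} at \(t=R^n(x)\in[0,1]\). The checks you add — that \(R^n(x)\) stays in \([0,1]\) under the convention \(R(1)=1\), and that the exterior extension of \(F\) is never used — are implicit in the paper and harmless.
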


\begin{proof}
We argue by induction on \(n\). For \(n=0\), the claim is exactly the definition of \(z_0\).
Assume that \eqref{eq:exact-loop-controller} holds for some \(n\ge 0\). Then, using
\Cref{lem:loop-controller-map}, we have
\[
z_{n+1}(x)
=
F(z_n(x))
=
F(E(R^n(x)))
=
E(R(R^n(x)))
=
E(R^{n+1}(x)).
\]
This closes the induction.
\end{proof}

\subsection{Special-hats and scalar readouts}
\label{subsec:section3-scalar-readouts}

If \(\gamma=h e_\mu\) is an atomic curve, then on \([0,1]\) its vectorization has the form
\[
G(x)=\vect(\gamma)(x)=h(x)u_\mu,
\]
where \(u_\mu\in\R^{pL}\) is the standard basis vector corresponding to the \(\mu\)-th coordinate
in the first block. Hence, the cascade identity from \S\ref{ss:cascade-identity} yields
\begin{equation}
\label{eq:special-basis-cascade-loop}
G^n(x)=T_{q_1(x)}\cdots T_{q_n(x)}\,h(R^n(x))\,u_\mu,
\qquad x\in[0,1].
\end{equation}
Thus the genuinely one-dimensional factor is \(h(R^n(x))\).

By \Cref{prop:exact-loop-controller}, the stage-\(n\) loop state is
\(z_n(x)=E(R^n(x))\).
We now recover the scalar factor \(h(R^n(x))\) from this exact loop state by means of two terminal
scalar readouts.
Fix a number
\(0<\varepsilon<1\).
The precise upper bound on \(\varepsilon\) needed for the later results will be stated there.

Define two scalar functions \(r^-,r^+:[0,1]\to[0,1]\) by
\begin{equation}
\label{eq:r-minus-def}
r^-(t):=
\begin{cases}
t, & 0\le t\le 1-\varepsilon,\\[1mm]
\displaystyle \frac{1-\varepsilon}{\varepsilon}(1-t),
& 1-\varepsilon\le t\le 1,
\end{cases}
\end{equation}
and
\begin{equation}
\label{eq:r-plus-def}
r^+(t):=
\begin{cases}
\displaystyle 1-\frac{1-\varepsilon}{\varepsilon}\,t,
& 0\le t\le \varepsilon,\\[2mm]
t, & \varepsilon\le t\le 1.
\end{cases}
\end{equation}
Thus \(r^-\) is exact on \([0,1-\varepsilon]\) and folds the right seam interval
\([1-\varepsilon,1]\) linearly onto \([0,1-\varepsilon]\), while \(r^+\) is exact on
\([\varepsilon,1]\) and folds the left seam interval \([0,\varepsilon]\) linearly onto
\([\varepsilon,1]\).

Since
\(r^-(0)=0=r^-(1)\)
and
\(r^+(0)=1=r^+(1)\),
both functions are compatible with the seam identification \(0\sim 1\). Therefore they induce
well-defined continuous piecewise affine functions on the loop \(\Gamma\), which we denote by
\(\rho^\pm:\Gamma\to[0,1]\),
via
\begin{equation}
\label{eq:rho-readouts-on-loop}
\rho^\pm(E(t)):=r^\pm(t),\qquad t\in[0,1].
\end{equation}
As in the proof of \Cref{lem:loop-controller-map}, we extend \(\rho^\pm\) arbitrarily but fixedly to CPwL maps
\[
\rho^\pm:\R^2\to[0,1].
\]
We continue to denote these extensions by the same symbols.
Figure~\ref{fig:readouts} shows how the two complementary readouts recover the terminal
scalar factor from the exact loop state while avoiding the seam ambiguity.

\begin{figure}[ht]
\centering
\includegraphics[width=.7\textwidth]{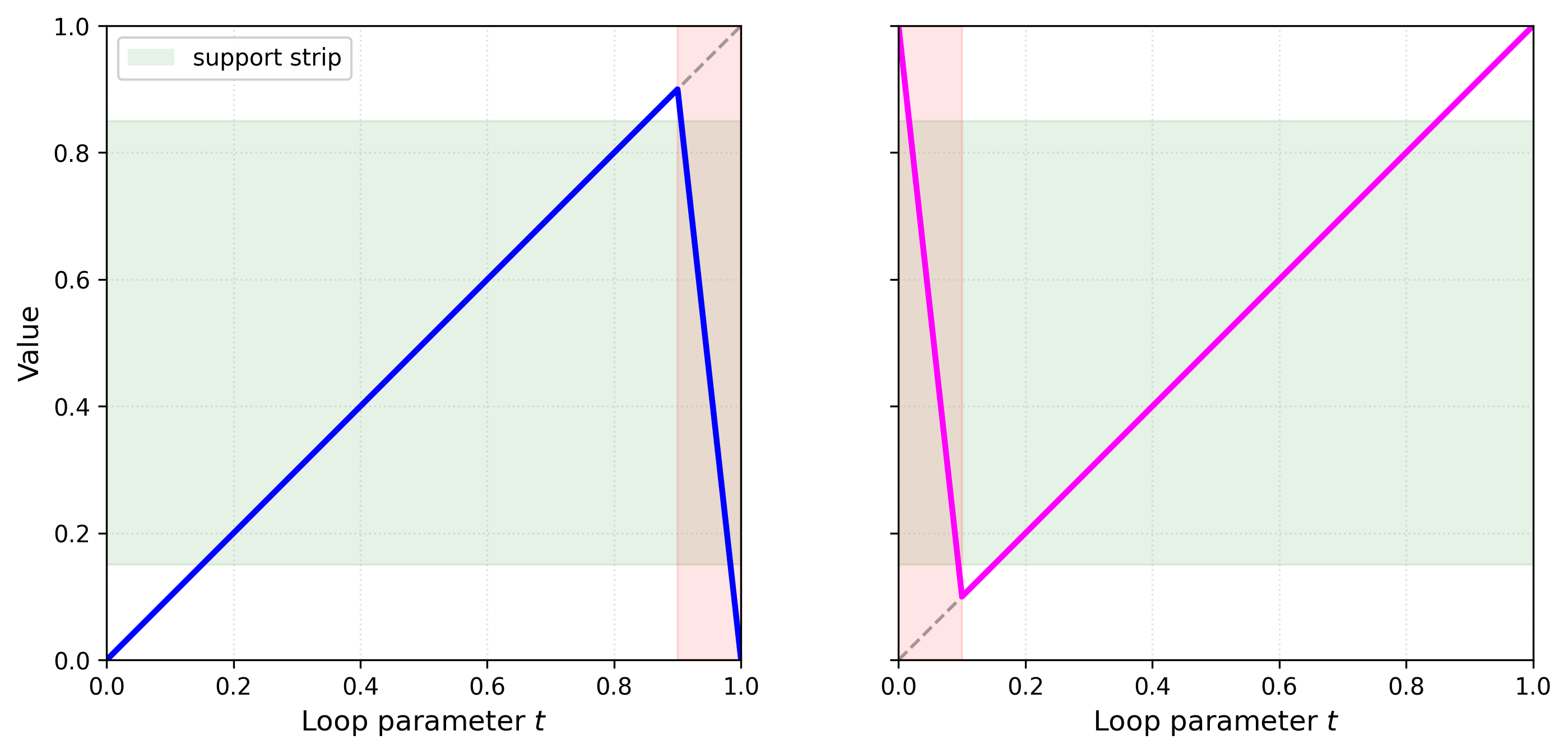}
\caption{Complementary scalar readouts \(\rho^{-}\) (left) and \(\rho^{+}\) (right). 
The dashed line is the identity and the solid curve is the corresponding readout. 
The horizontal shaded strip is the interval \([\mar,1-\mar]\) containing \(\supp h\). 
The vertical shaded strip marks the seam interval modified by the readout: 
\([1-\varepsilon,1]\) on the left and \([0,\varepsilon]\) on the right.}
\label{fig:readouts}
\end{figure}

\begin{lemma}[Scalar readouts]
\label{lem:scalar-readouts}
Let \(h\) be a special-hat, and assume
\(0<\varepsilon<\mar\).
Then we have
\begin{equation}
\label{eq:min-identity-parameter}
h(t)=\min\{h(r^-(t)),\,h(r^+(t))\}, \qquad t\in[0,1].
\end{equation}
Equivalently, for every \(n\ge 0\) and every \(x\in[0,1]\), we have
\begin{equation}
\label{eq:min-identity-loop}
h(R^n(x))
=
\min\bigl\{h(\rho^-(z_n(x))),\,h(\rho^+(z_n(x)))\bigr\}.
\end{equation}
\end{lemma}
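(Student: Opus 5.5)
The plan is to prove the parameter identity \eqref{eq:min-identity-parameter} by a three-way case split on \(t\in[0,1]\), and then deduce \eqref{eq:min-identity-loop} from it.

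\emph{Step 1: the three cases.} The intervals are \([0,\varepsilon]\), \([\varepsilon,1-\varepsilon]\) and \([1-\varepsilon,1]\). They cover \([0,1]\) because \(\varepsilon<\mar<\tfrac12\).

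\emph{Middle interval.} For \(t\in[\varepsilon,1-\varepsilon]\), both readouts are exact: \(r^-(t)=t\) and \(r^+(t)=t\). The minimum is then \(h(t)\) trivially.

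\emph{Left seam interval.} For \(t\in[0,\varepsilon]\), we have \(r^-(t)=t\), since \(\varepsilon\le 1-\varepsilon\). Because \(\varepsilon<\mar\), the point \(t\) lies outside \([\mar,1-\mar]\supset\supp h\), so \(h(t)=0\) and \(h(r^-(t))=0\). Since \(h\) is nonnegative, \(h(r^+(t))\ge0\), and therefore the minimum equals \(0=h(t)\).

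\emph{Right seam interval.} For \(t\in[1-\varepsilon,1]\), we have \(r^+(t)=t\). Also \(t\ge 1-\varepsilon>1-\mar\), so \(h(t)=h(r^+(t))=0\). Nonnegativity of \(h(r^-(t))\) again gives minimum \(0=h(t)\).

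\emph{Overlap points.} At \(t=\varepsilon\) and \(t=1-\varepsilon\) both case formulas apply. Both readouts are continuous there, so there is no conflict.

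The only point needing care is this: on each seam interval, the readout that folds may land inside \(\supp h\) and give a positive value. The identity therefore relies on the other readout being exact there, together with nonnegativity of \(h\). This is exactly where both hypotheses \(h\ge0\) and \(\varepsilon<\mar\) are used. I do not expect any real obstacle beyond this.

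\emph{Step 2: the loop form.} Substitute \(t=R^n(x)\in[0,1]\) into \eqref{eq:min-identity-parameter}. By \Cref{prop:exact-loop-controller}, \(z_n(x)=E(R^n(x))\). By the definition \eqref{eq:rho-readouts-on-loop}, \(\rho^\pm(E(t))=r^\pm(t)\). Hence \(h(r^\pm(R^n(x)))=h(\rho^\pm(z_n(x)))\).

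Since \(z_n(x)\in\Gamma\), only the restriction of \(\rho^\pm\) to \(\Gamma\) is used, so the arbitrary CPwL extension to \(\R^2\) is irrelevant. At the seam point \(a_0=E(0)=E(1)\), the value \(\rho^\pm(a_0)\) is well defined by the noted compatibility \(r^-(0)=r^-(1)\) and \(r^+(0)=r^+(1)\). This gives \eqref{eq:min-identity-loop}.
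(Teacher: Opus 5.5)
Your proposal is correct and follows the paper's proof essentially step for step: the same three-region split of \([0,1]\) into \([0,\varepsilon]\), \([\varepsilon,1-\varepsilon]\), \([1-\varepsilon,1]\), followed by substituting \(t=R^n(x)\) and using \(z_n(x)=E(R^n(x))\) together with \(\rho^\pm(E(t))=r^\pm(t)\). Your explicit appeal to \(h\ge 0\) on the seam intervals, to conclude that the minimum there is \(0\), makes precise a step the paper uses only tacitly.
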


\begin{proof}
Since \(h\) is a special-hat, we have
\(\supp h\subset[\mar,1-\mar]\).
We prove \eqref{eq:min-identity-parameter} by splitting \([0,1]\) into three regions.

\smallskip
\noindent
\emph{Region 1: \(t\in[\varepsilon,\,1-\varepsilon]\).}
On this interval, both readouts are exact:
\(r^-(t)=t=r^+(t)\) and thus
\[
\min\{h(r^-(t)),h(r^+(t))\}=h(t).
\]

\smallskip
\noindent
\emph{Region 2: \(t\in[0,\varepsilon]\).}
Here \(r^-(t)=t\), while \(r^+(t)\in[\varepsilon,1]\). Since \(\varepsilon<\mar\), we have
\(t<\mar\), and therefore
\(h(t)=0\).
It follows that
\(h(r^-(t))=h(t)=0\),
and hence
\[
\min\{h(r^-(t)),h(r^+(t))\}=0=h(t).
\]

\smallskip
\noindent
\emph{Region 3: \(t\in[1-\varepsilon,1]\).}
Now \(r^+(t)=t\), while \(r^-(t)\in[0,1-\varepsilon]\). Since \(\varepsilon<\mar\), we have
\(t>1-\mar\), and therefore
\(h(t)=0\).
Thus we have
\(h(r^+(t))=h(t)=0\),
and again
\[
\min\{h(r^-(t)),h(r^+(t))\}=0=h(t).
\]

Combining the three regions proves \eqref{eq:min-identity-parameter}. 
Finally, apply \eqref{eq:min-identity-parameter} with \(t=R^n(x)\), and use
\Cref{prop:exact-loop-controller} together with \eqref{eq:rho-readouts-on-loop}:
\[
\rho^\pm(z_n(x))
=
\rho^\pm(E(R^n(x)))
=
r^\pm(R^n(x)),
\]
yielding \eqref{eq:min-identity-loop}.
\end{proof}

We can now state the scalar realization theorem.

\begin{proposition}[Scalar factor realization]
\label{prop:section3-hRn-loop}
Let \(h\) be a special-hat with at most \(m\) breakpoints, and fix
\(0<\varepsilon<\mar\).
Then there exist constants \(C_0,C_1>0\), depending only on \(M\), \(\mar\), \(m\), and
\(\varepsilon\), such that for every \(n\ge 1\), one has
\[
h(R^n(\cdot))\in \Ups_{C_0,C_1n}(\ReLU;1,1).
\]
More precisely, the function \(h(R^n(\cdot))\) is given on \([0,1]\) by the exact identity
\begin{equation}
\label{eq:scalar-loop-realization}
h(R^n(x))
=
\min\bigl\{h(\rho^-(z_n(x))),\,h(\rho^+(z_n(x)))\bigr\},
\qquad x\in[0,1].
\end{equation}
Moreover, the realizing networks may be chosen with weights and biases bounded in absolute value
by a constant \(C_2\), where \(C_2\) depends only on \(M\), \(\mar\), \(m\), and \(\varepsilon\), and is
independent of \(n\).
\end{proposition}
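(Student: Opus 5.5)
The identity \eqref{eq:scalar-loop-realization} is \eqref{eq:min-identity-loop} from \Cref{lem:scalar-readouts}, since \(\varepsilon<\mar\). What remains is the network bookkeeping. The plan is to assemble a network with three parts: an input layer that computes \(z_0(x)=E(x)\), a stack of \(n\) copies of the two-layer block for \(F\) from \Cref{lem:loop-controller-map}, and a terminal block that evaluates \(\min\{h(\rho^-(z)),h(\rho^+(z))\}\). Only the middle part grows with \(n\). Because its width is \(C_M\) and its depth is \(2n\), the total depth is \(C_1 n\) and the width is a constant \(C_0\).

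\emph{Input stage.} Each component of \(E\) is a scalar CPwL function with at most three affine pieces on \([0,1]\). By \Cref{rem:cpwl-basic}(i), \(E\) is realized by a one-hidden-layer network of width bounded by an absolute constant, with weights bounded by \(3\). To keep everything on \([0,1]\) we may first clamp \(x\mapsto \min\{\max\{x,0\},1\}\), although only values on \([0,1]\) matter for the statement. Composing networks merges the last affine layer of one with the first of the next, so width and depth simply add up in the expected way.

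\emph{Recursive stage.} By \Cref{prop:exact-loop-controller}, after \(n\) applications of the \(F\)-network we have \(z_n(x)=E(R^n(x))\) exactly. The lemma gives weight bounds depending only on \(M\), and they do not accumulate in the weight magnitudes, since each layer is a copy of the same block. This is why \(C_2\) is independent of \(n\).

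\emph{Terminal stage.} The maps \(\rho^\pm:\R^2\to[0,1]\) are fixed CPwL extensions built on a fan triangulation, as in the proof of \Cref{lem:loop-controller-map}. Their breakpoints are determined by \(\varepsilon\) and the three corners of the triangle, so by \cite{relu-fem} each is realized with two hidden layers and width depending only on \(\varepsilon\). Next, \(h\) has at most \(m\) breakpoints in \([\mar,1-\mar]\), so it is a one-hidden-layer network of width \(O(m)\). We run the two branches \(h\circ\rho^-\) and \(h\circ\rho^+\) in parallel. Finally we apply \(\min\{a,b\}=a-\ReLU(a-b)\). To write this with ReLUs only, we pass \(a\) through as \(\ReLU(a)\), which is valid since \(h\ge0\), and we use one more hidden layer. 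This stage has constant width and depth, with weights depending on \(\varepsilon\), \(m\), and the slopes of \(h\).

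\emph{Main obstacle.} The one step that is not purely bookkeeping is whether the weight bound for \(h\) depends only on \(m\) and \(\mar\). The slopes of \(h\) are not controlled by these parameters alone, so \(C_2\) must also be allowed to depend on \(h\) itself, unless \(h\) is normalized. I would handle this by noting that the dependence is on fixed data and not on \(n\), which is the point of the statement. Apart from this, the only care needed is to ensure that the global CPwL extensions are irrelevant. This follows because all intermediate states stay on \(\Gamma\), where the functions agree with \(F_\Gamma\) and \(r^\pm\).
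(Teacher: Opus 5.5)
Your proposal is correct and follows the paper's proof essentially step for step. The identity comes from \Cref{lem:scalar-readouts}; the state $z_n=F^n\circ E$ is obtained by stacking $n$ copies of the fixed $F$-block, with $n$-independent weights; and the fixed branches $h\circ\rho^\pm$ are run in parallel and combined by a one-layer minimum gadget. Your caveat about $C_2$ is also right, and the paper's proof glosses over it: a network of width $C_0$, depth $C_1$ and weights bounded by $C_2$ has Lipschitz constant bounded in terms of $C_0,C_1,C_2$ alone, while $h(R(x))=h(Mx)$ on $[0,1/M)$ has $M$ times the slopes of $h$. So already for $n=1$ the weight bound must depend on the slopes of $h$, as in the paper's later phrasing ``the fixed CPwL data of $h$'', and not only on $M,\mar,m,\varepsilon$.
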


\begin{proof}
The exact identity \eqref{eq:scalar-loop-realization} is precisely
\Cref{lem:scalar-readouts}.

It remains to prove the asserted ReLU-realizability and the coefficient bound.
By \Cref{lem:loop-controller-map}, the embedding \(E\) is a fixed CPwL map \(\R\to\R^2\) and the
controller map \(F\) is a fixed CPwL map \(\R^2\to\R^2\), both with complexity depending only on
\(M\). Therefore the state map
\[
x\longmapsto z_n(x)=F^n(E(x))
\]
belongs to \(\Ups_{C'_0,C'_1n}(\ReLU;1,2)\) for constants \(C'_0,C'_1\) depending only on \(M\).
Since the same fixed map \(F\) is iterated at each stage, the realizing networks for \(z_n\) may be
chosen with weights and biases bounded independently of \(n\).

Next, each readout \(\rho^\pm:\R^2\to[0,1]\) is a fixed CPwL map with finitely many affine pieces,
depending only on \(\varepsilon\). Composing with the fixed CPwL map \(h\) gives two fixed branches
\[
z\longmapsto h(\rho^-(z)),
\qquad
z\longmapsto h(\rho^+(z)),
\]
each of which admits an exact finite ReLU realization with weights and biases bounded by a constant
depending only on \(\mar\), \(m\), and \(\varepsilon\). Hence each branch
\(x\mapsto h(\rho^\pm(z_n(x)))\)
belongs to \(\Ups_{C''_0,C''_1n}(\ReLU;1,1)\), and the weights and biases in such a realization are
bounded independently of \(n\).

Finally, the minimum of two scalar outputs is produced by one additional ReLU layer, for instance via
\[
\min\{u,v\}=v-\ReLU(v-u)=u-\ReLU(u-v).
\]
Running the two branches in parallel increases the width by only a constant factor, and the final
minimum gadget contributes only \(O(1)\) extra width and one extra layer. This proves that
\(h(R^n(\cdot))\in \Ups_{C_0,C_1n}(\ReLU;1,1)\),
and the uniform coefficient bound follows after enlarging the constant if necessary.
\end{proof}

\begin{remark}
\label{rem:no-global-inverse-loop}
The preceding construction uses no global inverse or readout \(\pi\) satisfying
\(\pi(E(t))=t\) on the whole closed loop. Such a global single-valued inverse is impossible because
\(E(0)=E(1)\). The exactness comes instead from the forward loop controller
\(z_m(x)=E(R^m(x))\), while the seam is handled by the two terminal readouts \(\rho^\pm\).
This is precisely the point at which the present loop-controller mechanism differs from the older
scalar-surrogate iteration.
\end{remark}

\subsection{Selector matrices and modified cascade blocks}
\label{subsec:section3-selectors}

We now replace the discontinuous matrix selector \(x\mapsto T_{Q(x)}\) by a CPwL matrix field
read directly from the exact loop state.
Recall from \S\ref{subsec:section3-loop-controller} that for each \(x\in[0,1]\) and each
\(m\ge 0\) we have the exact loop state
\[
z_m(x)=E(R^m(x))\in \Gamma.
\]
Unlike the scalar readouts from \S\ref{subsec:section3-scalar-readouts}, the selector
transition width must depend on the final depth \(n\). The reason is that once a stage enters a
selector transition set, we will propagate that information forward along the exact residual orbit
and force the terminal scalar factor to vanish; for this, the transition intervals must be chosen small
enough to prevent wrap-around across \(M\)-ary cells.

Fix once and for all a number
\(0<\bar\delta<1\),
and for each final depth \(n\ge 1\) set
\begin{equation}
\label{eq:delta-n-selector}
\delta_n:=\bar\delta\,\mar\, M^{-(n+1)}.
\end{equation}
Clearly, we have \(0<\delta_n<\mar\) for every \(n\).
For later reference, we define the {\em transition sets}
\begin{equation}
\label{eq:selector-transition-set}
J_n:=
\bigcup_{k=0}^{M-1}
\Big[
\frac{k}{M},
\frac{k}{M}+\delta_n
\Big]
\subset [0,1].
\end{equation}
Thus \(J_n\) is the union of right neighborhoods of all \(M\)-ary breakpoints,
excluding the seam point \(1\).

For each \(q\in\{0,1,\dots,M-1\}\), choose a continuous piecewise affine function
\(\vartheta_{q}:[0,1]\to[0,1]\)
such that:
\begin{enumerate}[label=(\roman*),leftmargin=3em]
    \item \(\vartheta_{q}(t)=\chi_{[q/M,(q+1)/M]}(t)\) on the complement of \(J_n\);
    \item for every \(t\in[0,1]\), one has
    \[
    \sum_{q=0}^{M-1}\vartheta_{q}(t)=1;
    \]
    \item the endpoint values satisfy
    \[
    \vartheta_{M-1}(0)=\vartheta_{M-1}(1)=1,
    \qquad
    \vartheta_{q}(0)=\vartheta_{q}(1)=0
    \quad (q=0,\dots,M-2),
    \]
    so that the family descends consistently to the closed loop \(\Gamma\).
\end{enumerate}

One may, and we tacitly do, choose these selectors so that each \(\vartheta_{q}\) is supported near
the corresponding \(M\)-ary cell, with the last-cell selector \(\vartheta_{M-1}\) carrying the seam value.
Such a family is easily obtained by taking the exact indicator partition away from \(J_n\), and then
inserting affine transition ramps inside each interval
\([\frac{k}{M},\frac{k}{M}+\delta_n]\) for \(k=0,1,\dots,M-1\).

\begin{figure}[ht]
\centering
\includegraphics[width=.7\textwidth]{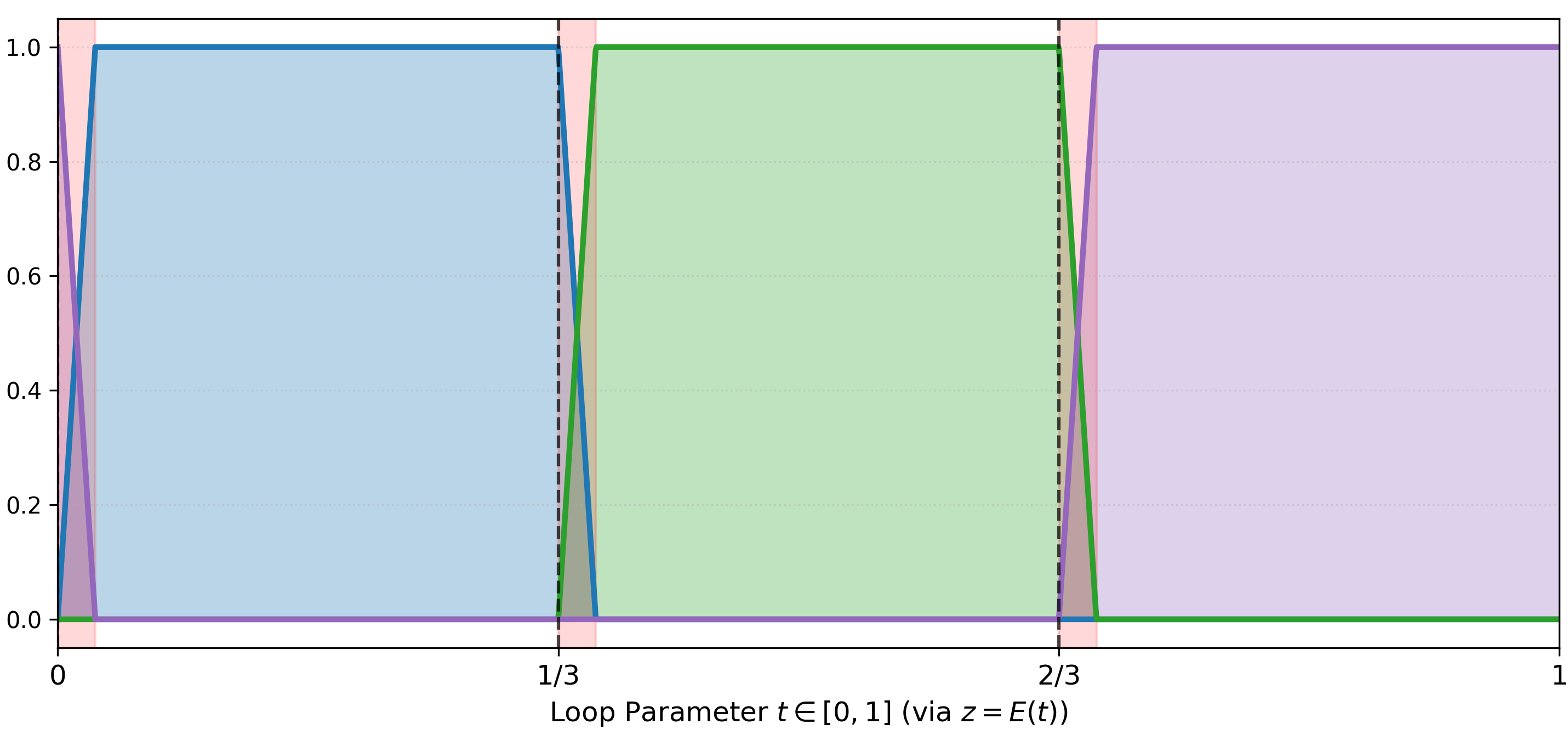}
\caption{Loop-state selector profiles for \(M=3\). Away from the red transition intervals \(J_n\),
the selectors agree with the exact digit indicators; inside \(J_n\), affine ramps produce a continuous
partition of unity. The last selector carries the seam value \(0\sim1\).}
\label{fig:selectors}
\end{figure}

The endpoint compatibility in (iii) allows us to define continuous piecewise affine selector functions
on the loop by
\[
\chi_{q}(E(t)):=\vartheta_{q}(t),\qquad t\in[0,1].
\]
As in the proof of \Cref{lem:loop-controller-map}, we extend these functions arbitrarily but fixedly to
CPwL maps
\[
\chi_{q}:\R^2\to[0,1],
\qquad q=0,1,\dots,M-1.
\]
The functions \(\chi_q\) may be arbitrary away from the loop, subject only to being
CPwL; only their values on \(\Gamma\) are used in the recursion.
Figure~\ref{fig:selectors} shows the resulting selector profiles for $M=3$.

\begin{lemma}[Loop-state selectors]
\label{lem:loop-state-selectors}
The selector family \(\{\chi_{q}\}_{q=0}^{M-1}\) has the following properties.
\begin{enumerate}[label=(\roman*),leftmargin=2em]
    \item For every \(z\in\Gamma\), we have
    \[
    0\le \chi_{q}(z)\le 1
    \qquad\text{and}\qquad
    \sum_{q=0}^{M-1}\chi_{q}(z)=1.
    \]

    \item If \(t\in[0,1]\setminus J_n\), then exactly one selector is equal to \(1\) at \(E(t)\), and it
    is the selector corresponding to the digit \(Q(t)\); equivalently,
    \[
    \chi_{q}(E(t))=\chi_{[q/M,(q+1)/M]}(t),
    \qquad t\in[0,1]\setminus J_n.
    \]

    \item Each \(\chi_{q}\) is CPwL on \(\R^2\), hence belongs to
    \(\Ups_{C_\chi,2}(\ReLU;2,1)\)
    for a constant \(C_\chi\) depending only on \(M\). Moreover, for fixed \(\bar\delta\), the
    realizing networks may be chosen with weights and biases bounded in absolute value by
    \(C_{\chi,\mar} M^n\), where \(C_{\chi,\mar}\) is independent of \(n\).
\end{enumerate}
\end{lemma}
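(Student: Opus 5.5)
Properties (i) and (ii) follow by transporting the defining properties of the \(\vartheta_q\) through \(E\); property (iii) needs the ReLU finite-element realization used in \Cref{lem:loop-controller-map}, together with a slope count to get the coefficient bound.

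\emph{Step 1: (i).} Every \(z\in\Gamma\) is \(E(t)\) for some \(t\in[0,1]\). By definition \(\chi_q(z)=\vartheta_q(t)\). This is well defined: the only non-injectivity of \(E\) is \(E(0)=E(1)\), and condition (iii) on the \(\vartheta_q\) gives equal values at \(t=0\) and \(t=1\). The bounds \(0\le\chi_q\le1\) and the partition of unity then follow from the range \([0,1]\) of \(\vartheta_q\) and from condition (ii).

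\emph{Step 2: (ii).} For \(t\notin J_n\), condition (i) gives \(\vartheta_q(t)=\chi_{[q/M,(q+1)/M]}(t)\), so \(\chi_q(E(t))\) equals this indicator. We must check that exactly one indicator equals \(1\) and that it is the one for \(Q(t)\).
\begin{itemize}[leftmargin=2em]
\item For \(t\in[0,1]\setminus J_n\) with \(t\) not an interior breakpoint \(k/M\), \(1\le k\le M-1\), \(t\) lies in the interior of exactly one closed cell. That cell is \([Q(t)/M,(Q(t)+1)/M]\).
\item Each interior breakpoint \(k/M\), and also \(t=0\), lies in \(J_n\), so these points are excluded.
\item The point \(t=1\) is not in \(J_n\), since \(\delta_n<1/M\). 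It lies only in the cell \([(M-1)/M,1]\), consistent with \(Q(1)=M-1\).
\end{itemize}
So exactly one selector equals \(1\) and the rest vanish.

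\emph{Step 3: (iii), realizability.} Each \(\chi_q\) is CPwL on \(\Gamma\). Its breakpoints are the images under \(E\) of the three corners \(0,1/3,2/3\), the points \(k/M\), and the points \(k/M+\delta_n\), so there are at most \(O(M)\) of them. Extend by the same fan-triangulation construction as in \Cref{lem:loop-controller-map}: pick an interior point \(c\) with a fixed value, extend affinely on each fan triangle, and add a fixed finite exterior extension. The number of affine pieces is \(O(M)\), independent of \(n\). The ReLU–finite-element theorem of \cite{relu-fem} then yields a realization in \(\Ups_{C_\chi,2}(\ReLU;2,1)\) with \(C_\chi\) depending only on \(M\).

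\emph{Step 4: (iii), coefficient bound.} This is the main obstacle, because the fan triangles become thin: the boundary segments of length \(\delta_n\) produce triangles \([c,b_i,b_{i+1}]\) of width \(\sim\delta_n\).
\begin{itemize}[leftmargin=2em]
\item The gradient of the affine extension on such a triangle is determined by the values at \(c,b_i,b_{i+1}\). These values lie in \([0,1]\), and the geometry is fixed apart from the short edge. Hence the gradient is \(O(1/\delta_n)\), which is \(O(M^{n+1}/(\bar\delta\mar))\).
\item To control the weights, I would first try to use the explicit structure of the \cite{relu-fem} realization. Its weights are built from the nodal values and the element gradients, with the geometric data of the hat functions. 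The vertex positions are \(O(1)\), and the smallest element feature is \(\gtrsim\delta_n\).
\item I expect every weight and bias to be bounded by a polynomial of fixed degree in \(1/\delta_n\), with constants depending only on \(M\). A crude version of that, degree one in \(1/\delta_n\), gives the stated \(C_{\chi,\mar}M^n\).
\end{itemize}
If the cited realization does not make this transparent, I would build the network by hand instead. Write \(\chi_q\) restricted to a neighborhood of \(\Gamma\) as a fixed-layer combination: a readout of \(t\) along each triangle edge, composed with scalar ramps of slope \(1/\delta_n\). Each ramp is one ReLU pair with weights \(O(M^{n+1})\). The remaining weights are \(O(1)\) and depend only on \(M\) and \(\mar\). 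Only the values on \(\Gamma\) matter, so this route suffices.
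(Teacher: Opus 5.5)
Your proposal follows essentially the same route as the paper. Parts (i) and (ii) are read off from the defining properties of the \(\vartheta_q\) transported through \(E\), and your check that \(0\) and the interior breakpoints lie in \(J_n\) while \(t=1\) does not is exactly what makes (ii) consistent. Part (iii) comes from a fixed-complexity CPwL extension whose only \(n\)-dependence is through slopes of order \(1/\delta_n\); you make this more precise than the paper's one-line slope count by identifying the thin fan triangles and their \(O(1/\delta_n)\) gradients.

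One caution on Step 4: the bound must be \emph{linear} in \(1/\delta_n\), since a polynomial of degree \(d>1\) would give \(M^{dn}\), not \(C_{\chi,\mar}M^n\). So degree one is the case you actually need, not a ``crude'' one. It holds because in the usual max--min or nodal-basis realizations the element gradients enter only the first affine layer, while later weights are \(O(1)\). The paper's slope argument tacitly uses the same fact.
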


\begin{proof}
Parts (i) and (ii) are immediate from the construction of \(\vartheta_{q}\) on \([0,1]\) and the
definition of \(\chi_{q}\) on \(\Gamma\). Part (iii) follows because each \(\chi_{q}\) is a fixed CPwL
extension of a continuous piecewise affine function on a finite polygonal complex. The only
\(n\)-dependence comes from the transition width \(\delta_n\), so the nonzero slopes of the
transition ramps are bounded by
\[
\frac{C}{\delta_n}
=
\frac{C}{\bar\delta\mar}\,M^{n+1}
\le C_{\chi,\mar}M^n,
\]
after enlarging the constant \(C_{\chi,\mar}\) if necessary.
\end{proof}

We now combine the selectors with the block transition matrices from \S\ref{ss:block-transition}.

\begin{definition}[Modified matrix field]
\label{def:modified-matrix-field-loop}
Define
\begin{equation}
\label{eq:modified-matrix-field-loop}
\widehat T_n(z)
:=
\sum_{q=0}^{M-1}\chi_{q}(z)\,T_q,
\qquad z\in\R^2.
\end{equation}
For \(j=1,\dots,n\), define the stage-\(j\) matrix field on \([0,1]\) by
\begin{equation}
\label{eq:modified-stage-matrices-loop}
\widehat M_j(x):=\widehat T_n(z_{j-1}(x))
=\widehat T_n(E(R^{j-1}(x))).
\end{equation}
\end{definition}

Thus \(\widehat M_j(x)\) is obtained by reading the digit information directly from the exact loop
state \(z_{j-1}(x)\), rather than from an iterated scalar surrogate residual.

For an atomic curve \(\gamma=h e_\mu\) and a coordinate index
\(\ell\in\{1,\dots,pL\}\), we write
\[
g_{n,\ell}(x):=e_\ell^\top G^n(x),
\qquad x\in[0,1].
\]

\begin{proposition}[Modified cascade identity]
\label{prop:modified-cascade-identity-loop}
For every atomic curve \(\gamma=h e_\mu\) and every coordinate
\(\ell\in\{1,\dots,pL\}\), one has
\begin{equation}
\label{eq:modified-cascade-identity-loop}
g_{n,\ell}(x)
=
h(R^n(x))\,e_\ell^\top
\widehat M_1(x)\widehat M_2(x)\cdots \widehat M_n(x)\,u_\mu,
\qquad x\in[0,1].
\end{equation}
\end{proposition}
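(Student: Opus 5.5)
The plan is a case split on whether the residual orbit of \(x\) ever enters the transition set \(J_n\) during the first \(n\) stages. The starting point is the exact atomic cascade \eqref{eq:special-basis-cascade-loop}, which gives
\[
g_{n,\ell}(x)=h(R^n(x))\,e_\ell^\top T_{q_1(x)}\cdots T_{q_n(x)}u_\mu .
\]
It therefore suffices to show that, for each \(x\), either \(\widehat M_j(x)=T_{q_j(x)}\) for all \(j=1,\dots,n\), or \(h(R^n(x))=0\). In the second case both sides of \eqref{eq:modified-cascade-identity-loop} vanish, whatever the values of the \(\widehat M_j(x)\).

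\emph{Case 1: \(R^{j-1}(x)\notin J_n\) for every \(j=1,\dots,n\).} By \Cref{prop:exact-loop-controller}, \(z_{j-1}(x)=E(R^{j-1}(x))\). By \Cref{lem:loop-state-selectors}(ii), \(\chi_q(z_{j-1}(x))\) is the exact indicator of the cell containing \(R^{j-1}(x)\). At the seam point \(t=1\) this is consistent with the convention \(Q(1)=M-1\), since \(\vartheta_{M-1}(1)=1\). Hence the sum \eqref{eq:modified-matrix-field-loop} collapses to
\[
\widehat M_j(x)=T_{Q(R^{j-1}(x))}=T_{q_j(x)},
\]
and the identity follows from the displayed exact cascade.

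\emph{Case 2: some stage \(j\in\{1,\dots,n\}\) has \(t_{j-1}:=R^{j-1}(x)\in J_n\).} Write \(t_{j-1}\in[k/M,k/M+\delta_n]\). Note that \(1\notin J_n\), since \(\delta_n<1/M\), so \(t_{j-1}<1\) and the usual formula for \(R\) applies. Then
\[
R^j(x)=Mt_{j-1}-k\in[0,M\delta_n].
\]
Next I propagate forward without wrap-around. If \(s\in[0,\sigma]\) with \(\sigma<1/M\), then \(Ms<1\) and so \(R(s)=Ms\). Inductively this gives
\[
R^{n}(x)\in\bigl[0,\,M^{\,n-j+1}\delta_n\bigr].
\]
Using \eqref{eq:delta-n-selector}, the right endpoint is at most \(M^n\delta_n=\bar\delta\,\mar\,M^{-1}<\mar\). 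Every intermediate bound \(M^{i}\delta_n\) with \(i\le n\) is likewise below \(\mar/M<1/M\), which justifies each no-wrap step. Since \(\supp h\subset[\mar,1-\mar]\), we get \(h(R^n(x))=0\). Thus the left side vanishes by the exact cascade, and the right side vanishes because of its scalar prefactor.

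The only delicate point is this no-wrap-around bookkeeping in Case 2. The key is that the specific scaling \(\delta_n=\bar\delta\mar M^{-(n+1)}\) leaves exactly enough room. The worst case is \(j=1\), where the interval is stretched by \(M^n\). There I need to check carefully that each intermediate iterate stays strictly below \(1/M\), and that the closed endpoint convention at \(1\) never interferes. Combining the two cases gives \eqref{eq:modified-cascade-identity-loop} for every \(x\in[0,1]\).
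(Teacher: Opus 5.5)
Your proposal is correct and follows the paper's proof essentially step for step. You use the same transition-free versus transition-hit case split. In the first case the selectors collapse to $T_{q_j(x)}$, and in the second the no-wrap propagation gives $R^n(x)\le M^n\delta_n=\bar\delta\mar M^{-1}<\mar$, so $h(R^n(x))=0$. The only differences are cosmetic: you handle the exact-breakpoint subcase $R^{j-1}(x)=k/M$ uniformly through the interval bound rather than separately, and you correctly observe that the hitting index need not be chosen minimal.
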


\begin{proof}
Recall the exact cascade formula
\begin{equation}
\label{eq:exact-cascade-again}
g_{n,\ell}(x)
=
h(R^n(x))\,e_\ell^\top
T_{q_1(x)}T_{q_2(x)}\cdots T_{q_n(x)}\,u_\mu.
\end{equation}
We distinguish two cases.

\smallskip
\noindent
\emph{Case 1 (transition-free): \(R^{j-1}(x)\notin J_n\) for every \(j=1,\dots,n\).}
Then by \Cref{lem:loop-state-selectors}(ii), we have
\[
\chi_{q}(z_{j-1}(x))
=
\chi_{[q/M,(q+1)/M]}(R^{j-1}(x)),
\qquad q=0,\dots,M-1.
\]
Since \(q_j(x)=Q(R^{j-1}(x))\), it follows that
\[
\widehat M_j(x)=T_{q_j(x)},
\qquad j=1,\dots,n.
\]
Substituting this into \eqref{eq:exact-cascade-again} gives
\eqref{eq:modified-cascade-identity-loop}.

\smallskip
\noindent
\emph{Case 2 (transition-hit): \(R^{j-1}(x)\in J_n\) for some \(j\in\{1,\dots,n\}\).}
Let \(m\) be the smallest such $j$. Then
\[
R^{m-1}(x)\in
\Big[
\frac{k}{M},
\frac{k}{M}+\delta_n
\Big]
\]
for some \(k\in\{0,\dots,M-1\}\). Write
\[
R^{m-1}(x)=\frac{k}{M}+\delta,
\qquad 0\le \delta\le \delta_n.
\]

We first treat the exact-breakpoint case \(\delta=0\).
Since \(R^{m-1}(x)=\frac{k}{M}\in[0,1)\), by the endpoint convention for \(Q\) and \(R\),
we have \(R^m(x)=0\).
Hence \(R^j(x)=0\) for every \(j\ge m\), and therefore
\[
h(R^n(x))=h(0)=0.
\]

Now assume \(\delta>0\). Since \(R^{m-1}(x)\) lies just to the right of the breakpoint \(k/M\), its
digit is \(k\), and therefore
\[
R^m(x)=M\delta.
\]
We claim that for every \(s=1,\dots,n-m+1\),
\begin{equation}
\label{eq:no-wrap-iterate}
R^{m-1+s}(x)=M^s\delta.
\end{equation}
Indeed, for such \(s\), we have
\[
M^s\delta
\le
M^{\,n-m+1}\delta_n
\le
M^n\delta_n
=
\bar\delta\,\mar\,M^{-1}
<
\frac1M.
\]
Hence
\(M^s\delta \in \big(0,\frac1M\big)\),
meaning that every intermediate iterate remains in the first \(M\)-ary digit cell. 
Thus \eqref{eq:no-wrap-iterate} follows by induction.

Taking \(s=n-m+1\), we obtain
\[
R^n(x)=M^{\,n-m+1}\delta
\in
[0,M^{\,n-m+1}\delta_n]
\subset
[0,\bar\delta\mar M^{-1}] .
\]
Since \(\bar\delta<1\) and \(M\ge 2\), we have \(\bar\delta M^{-1}<1\), implying that
\(R^n(x)\in [0,\mar)\).
Because \(h\) is a special-hat and \(\supp h\subset[\mar,1-\mar]\), it follows that
\[
h(R^n(x))=0.
\]

Thus in all subcases of Case 2, the left-hand side of
\eqref{eq:modified-cascade-identity-loop} vanishes.
The right-hand side also vanishes, because it contains the same scalar factor \(h(R^n(x))\).
Hence \eqref{eq:modified-cascade-identity-loop} holds in this case as well.

Combining the two cases completes the proof.
\end{proof}

\begin{figure}[ht]
\centering
\includegraphics[width=.8\textwidth]{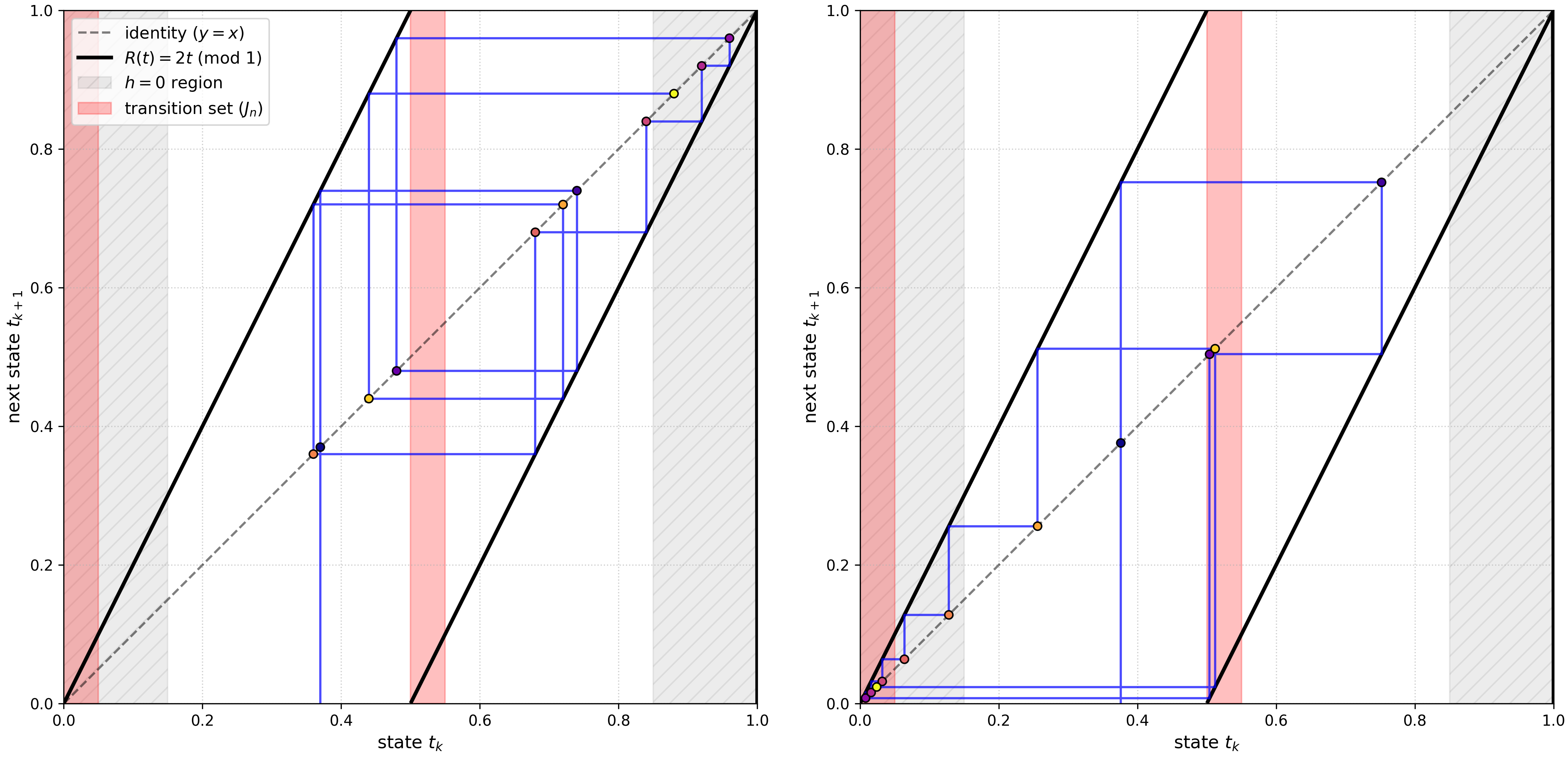}
\caption{Residual dynamics of the exact loop controller for \(M=2\).
Left: the first \(n\) iterates avoid the transition set \(J_n\).
Right: an orbit enters the transition interval near \(t=1/2\), and the choice of \(\delta_n\)
ensures \(R^n(x)\) still lies in the zero region of \(h\). The cobweb is continued past time \(n\)
only to show the later residual dynamics.}
\label{fig:cobweb}
\end{figure}

\begin{remark}
\label{rem:selector-section-philosophy}
The point of \Cref{prop:modified-cascade-identity-loop} is that the forward controller remains exact
throughout: all residual information is carried by the exact loop states \(z_j(x)=E(R^j(x))\).
The selector ambiguity is purely stage-local and confined to the transition set \(J_n\). Once some
stage enters \(J_n\), the choice of the width \(\delta_n\) forces the exact residual orbit into the
terminal zero region of the special-hat without wrap-around across \(M\)-ary cells, so the common
scalar factor \(h(R^n(x))\) vanishes. Thus the ambiguity is absorbed only at the terminal scalar
stage, not by any globally exact selector on the loop, and no separate cumulative transition-set
bookkeeping is needed.
Figure~\ref{fig:cobweb} illustrates these two alternatives in the binary case:
transition-free residual transport up to the terminal depth on the one hand, and a transition hit
whose terminal residual still lies in the zero region of the special hat on the other.
\end{remark}

\subsection{Recursive realization on the unit interval}
\label{subsec:section3-unit-interval}

We continue with the same fixed support margin \(\mar\in(0,\tfrac12)\), the same terminal-readout
parameter \(\varepsilon\in(0,\mar)\) from \S\ref{subsec:section3-scalar-readouts}, and the same selector
parameter \(\bar\delta\in(0,1)\) from \S\ref{subsec:section3-selectors}, with
\(\delta_n=\bar\delta\,\mar\,M^{-(n+1)}\).
Thus the scalar factor \(h(R^n(x))\) is realized using the fixed readouts \(\rho^\pm\), while the
modified cascade matrices \(\widehat M_j(x)\) are built from the level-\(n\) selector family
\(\{\chi_{q}\}_{q=0}^{M-1}\).

We now implement the modified cascade identity by a fixed-depth recursive block driven by the
exact loop controller.
We first recall the following product gadget from \cite[Lemma 9]{source}.

\begin{lemma}[Product gadget]
\label{lem:product-gadget}
Let \(N\in\N\) and \(a>0\). Define
\[
\Pi_a(\lambda,y)
:=
-\ReLU(\lambda a\mathbf 1-y)
-\ReLU((1-\lambda)a\mathbf 1-\ReLU(-y))
+a\mathbf 1,
\]
for \(\lambda\in[0,1]\) and \(y\in\R^N\), where \(\mathbf 1\in\R^N\) is the vector of all ones.
Then
\[
\Pi_a\in \Ups_{2N+1,2}(\ReLU;N+1,N),
\]
and for every \(y\in[-a,a]^N\) and every \(\lambda\in[0,1]\) one has
\[
\Pi_a(1,y)=y,\qquad \Pi_a(0,y)=0,\qquad \Pi_a(\lambda,0)=0.
\]
\end{lemma}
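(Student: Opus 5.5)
The plan is to verify the three identities by direct evaluation of the defining formula, and then to read off a two-hidden-layer architecture from the nesting of the ReLUs. Everything acts componentwise in \(y\), so it suffices to treat one coordinate \(y_i\in[-a,a]\) with the corresponding component of \(a\mathbf 1\).

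\emph{The identities.} There are three substitutions to make.
\begin{itemize}[leftmargin=2em]
\item For \(\lambda=1\): the second term is \(\ReLU(-\ReLU(-y_i))\). Its argument is \(\le 0\), so the term vanishes. Since \(y_i\le a\), the first term equals \(\ReLU(a-y_i)=a-y_i\). Hence \(\Pi_a(1,y)_i=-(a-y_i)+a=y_i\).
\item For \(\lambda=0\): the first term is \(-\ReLU(-y_i)\). Since \(y_i\ge -a\), we have \(0\le \ReLU(-y_i)\le a\), so the second term is \(\ReLU(a-\ReLU(-y_i))=a-\ReLU(-y_i)\). The sum is \(-\ReLU(-y_i)-a+\ReLU(-y_i)+a=0\).
\item For \(y=0\) and \(\lambda\in[0,1]\): both \(\lambda a\) and \((1-\lambda)a\) are nonnegative. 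The expression becomes \(-\lambda a-(1-\lambda)a+a=0\).
\end{itemize}
The only inputs used are \(|y_i|\le a\) and \(\lambda\in[0,1]\); these are exactly what make each ReLU act as either the identity or zero.

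\emph{The architecture.} The first hidden layer takes input \((\lambda,y)\in\R^{N+1}\) and computes \(2N+1\) neurons:
\begin{itemize}[leftmargin=2em]
\item the \(N\) values \(\ReLU(\lambda a-y_i)\), which are affine in the input;
\item the \(N\) values \(\ReLU(-y_i)\);
\item one neuron \(\ReLU(\lambda)\), which equals \(\lambda\) on the relevant domain \(\lambda\in[0,1]\).
\end{itemize}
The second hidden layer computes:
\begin{itemize}[leftmargin=2em]
\item the \(N\) values \(\ReLU\bigl((1-\lambda)a-\ReLU(-y_i)\bigr)\), which are affine in the first-layer outputs \(\lambda\) and \(\ReLU(-y_i)\);
\item the \(N\) values \(\ReLU(\ReLU(\lambda a-y_i))=\ReLU(\lambda a-y_i)\), passed through unchanged because they are nonnegative.
\end{itemize}
This gives width \(2N\le 2N+1\). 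The output layer is the affine map \(-(\text{first block})-(\text{second block})+a\mathbf 1\). Thus \(\Pi_a\in\Ups_{2N+1,2}(\ReLU;N+1,N)\). All weights and biases lie in \(\{0,\pm1,\pm a\}\), which is what the later coefficient bookkeeping will need.

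\emph{Main obstacle.} The only delicate point is that \(\lambda\) must reach the second layer, and a ReLU can carry it unchanged only where \(\lambda\ge0\). I will rely on the stated domain \(\lambda\in[0,1]\), where \(\ReLU(\lambda)=\lambda\). If one wants the formula to hold for all real \(\lambda\), the fallback is to carry both \(\ReLU(\lambda)\) and \(\ReLU(-\lambda)\) in the first layer. That costs one extra neuron, and since the lemma is only applied with \(\lambda=\chi_q(z)\in[0,1]\), the width bound \(2N+1\) as stated is the intended one.
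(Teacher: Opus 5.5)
Your proposal is correct. The three identities follow from exactly the case analysis you give, using only $|y_i|\le a$ and $\lambda\in[0,1]$. Your two-hidden-layer network realizes $\Pi_a$ on $[0,1]\times\R^N$ within width $2N+1$: the first layer computes $\ReLU(\lambda a-y_i)$, $\ReLU(-y_i)$ and $\ReLU(\lambda)$, and the second computes $\ReLU\bigl((1-\lambda)a-\ReLU(-y_i)\bigr)$ together with the pass-through neurons.

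The paper gives no proof of its own here; it imports the gadget from \cite[Lemma 9]{source}, and your direct verification supplies that routine check. Your use of $\ReLU(\lambda)=\lambda$ is legitimate because the only application takes $\lambda=\chi_q(\cdot)\in[0,1]$. Your coefficient set $\{0,\pm1,\pm a\}$ is what underlies the paper's later bound of the gadget's weights by a multiple of $a$.
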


Fix a coordinate index \(\ell\in\{1,\dots,pL\}\),
and recall from \Cref{prop:modified-cascade-identity-loop} that
\[
g_{n,\ell}(x)
=
h(R^n(x))\, e_\ell^\intercal \widehat M_1(x)\cdots \widehat M_n(x)\,u_\mu,
\qquad x\in[0,1].
\]
It is therefore natural to work with the corresponding column-vector states
\[
\Phi_0(x):=h(R^n(x))\,e_\ell,
\qquad
\Phi_j(x):=\widehat M_j(x)^\intercal \Phi_{j-1}(x),
\qquad j=1,\dots,n,
\]
which give
\[
g_{n,\ell}(x)=u_\mu^\intercal \Phi_n(x).
\]
The role of the initial factor \(h(R^n(x))\) is crucial: by \Cref{prop:modified-cascade-identity-loop},
if some stage enters the selector transition region, then \(h(R^n(x))=0\), and therefore the entire
adjoint state recursion starts from \(0\); this is exactly the situation in which the product gadget
uses the identity \(\Pi_a(\lambda,0)=0\).

Furthermore, choose a constant \(a>0\) so large that
\[
\|T_q^\intercal y\|_\infty\le a
\]
for every \(q\in\{0,\dots,M-1\}\) and every intermediate vector \(y=\Phi_j(x)\) arising in the
recursion.
This is possible because only finitely many matrices \(T_q\) occur and all states are uniformly
bounded.

We now define the network-produced states \(\widehat \Phi_j\) and the loop-controller states
\(\widehat z_j\) recursively by
\begin{equation}
\label{eq:recursive-update-block-loop}
\left\{
\begin{aligned}
\widehat \Phi_j(x)
&=
\sum_{q=0}^{M-1}
\Pi_a\bigl(\chi_{q}(\widehat z_{j-1}(x)),\,T_q^\intercal \widehat \Phi_{j-1}(x)\bigr),\\
\widehat z_j(x)
&=
F(\widehat z_{j-1}(x)),
\end{aligned}
\right.
\end{equation}
for \(j=1,\dots,n\), with
\[
\widehat \Phi_0(x):=
h(R^n(x))\,e_\ell,
\qquad
\widehat z_0(x):=E(x).
\]
By construction of the modified matrix field,
\[
\widehat M_j(x)=\widehat T_n(z_{j-1}(x))
=
\sum_{q=0}^{M-1}\chi_{q}(z_{j-1}(x))\,T_q,
\qquad j=1,\dots,n.
\]

\begin{lemma}[Exactness of the recursive block]
\label{lem:block-exactness}
For every \(j=0,\dots,n\) and every \(x\in[0,1]\), the states produced by
\eqref{eq:recursive-update-block-loop} satisfy
\[
\widehat z_j(x)=z_j(x)=E(R^j(x)),
\]
and
\[
\widehat \Phi_j(x)=\Phi_j(x)
=
\widehat M_j(x)^\intercal\cdots \widehat M_1(x)^\intercal h(R^n(x))\,e_\ell.
\]
In particular,
\[
g_{n,\ell}(x)=u_\mu^\intercal \widehat \Phi_n(x).
\]
\end{lemma}

\begin{proof}
The claim for the loop state is immediate from the recursion and
\Cref{lem:loop-controller-map,prop:exact-loop-controller}. Indeed, \(\widehat z_0(x)=E(x)=z_0(x)\),
and if \(\widehat z_{j-1}(x)=z_{j-1}(x)\), then
\[
\widehat z_j(x)=F(\widehat z_{j-1}(x))=F(z_{j-1}(x))=z_j(x).
\]

For the vector state, the statement is true for \(j=0\) by definition.
Assume it holds at level \(j-1\).

If \(R^{m-1}(x)\notin J_n\) for every \(m=1,\dots,n\), then by
\Cref{lem:loop-state-selectors} we have
\[
\chi_{q}(z_{j-1}(x))
=
\chi_{[q/M,(q+1)/M]}(R^{j-1}(x)),
\qquad q=0,\dots,M-1,
\]
so exactly one selector is equal to \(1\), namely the one corresponding to \(q_j(x)\).
Hence \Cref{lem:product-gadget} gives
\[
\widehat \Phi_j(x)
=
T_{q_j(x)}^\intercal \widehat \Phi_{j-1}(x)
=
\widehat M_j(x)^\intercal \widehat \Phi_{j-1}(x),
\]
which is the desired recursion.

If \(R^{m-1}(x)\in J_n\) for some \(m\in\{1,\dots,n\}\), then by the proof of
\Cref{prop:modified-cascade-identity-loop} one has
\[
h(R^n(x))=0.
\]
Therefore \(\widehat \Phi_0(x)=0\), and by induction \(\widehat \Phi_{j-1}(x)=0\).
Since \(\Pi_a(\lambda,0)=0\) for every \(\lambda\in[0,1]\), every term in
\eqref{eq:recursive-update-block-loop} vanishes, and thus
\[
\widehat \Phi_j(x)=0.
\]
This agrees with the desired formula because the same scalar factor \(h(R^n(x))\) appears there.

The final identity \(g_{n,\ell}(x)=u_\mu^\intercal \widehat \Phi_n(x)\) now follows immediately
from the definition of \(\Phi_n\).
\end{proof}

We can now conclude the unit-interval theorem.

\begin{theorem}[Vectorized atomic realization on the unit interval]
\label{thm:section3-special-unit-interval}
Let \(\gamma(t)=h(t)e_\mu\) be an atomic curve, and assume that \(h\) has at most \(m\)
breakpoints.  Fix \(0<\varepsilon<\mar\) and \(0<\bar\delta<1\).  Then there exist
constants \(C_0,C_1>0\), depending only on \(M,\mar,\varepsilon,\bar\delta,L,p\), and \(m\),
such that
\[
\vect(V^n\gamma) \in\Ups_{C_0,C_1n}(\ReLU;1,pL),
\qquad n\ge1.
\]
Moreover, for the fixed function \(h\) and fixed matrices \(A_j\), the realizing networks may be
chosen with weights and biases bounded by \(C_2\Lambda^n\), where \(C_2,\Lambda>0\) depend on
the fixed CPwL data of \(h\), on the matrices \(A_j\), and on the preceding fixed parameters.
\end{theorem}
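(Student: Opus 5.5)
The plan is to assemble a single network that realizes, for each coordinate \(\ell\in\{1,\dots,pL\}\), the recursion \eqref{eq:recursive-update-block-loop}, and then read off \(g_{n,\ell}=u_\mu^\intercal\widehat\Phi_n\) as in \Cref{lem:block-exactness}.

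\textbf{Running all coordinates in parallel.} It is cheaper to run all coordinates at once. Stack the initial states \(h(R^n(x))e_\ell\) for \(\ell=1,\dots,pL\) into a \(pL\times pL\) matrix state, equivalently \(h(R^n(x))\) times the identity, flattened to \(\R^{(pL)^2}\). Apply the same update to each column. Since \(\Pi_a\) acts coordinatewise in \(y\), this simply enlarges \(N\) in \Cref{lem:product-gadget} to a constant depending on \(p,L\).

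\textbf{Layer layout.} The network is organized in three phases.
\begin{enumerate}[label=(\alph*),leftmargin=2em]
\item \emph{Scalar phase (depth \(O(n)\)).} Compute \(z_n(x)=F^n(E(x))\) and then \(h(R^n(x))\) by \Cref{prop:section3-hRn-loop}. In parallel, carry the input \(x\) forward through identity channels \(x=\ReLU(x)-\ReLU(-x)\), which costs two extra width units per layer.
\item \emph{Reinitialization.} Output \(\widehat\Phi_0=h(R^n(x))\,\mathrm{Id}\) (a fixed linear map from the scalar) together with \(\widehat z_0=E(x)\), recomputed from the carried \(x\) in \(O(1)\) layers.
\item \emph{Cascade phase (depth \(O(n)\)).} Run \(n\) copies of the fixed block \eqref{eq:recursive-update-block-loop}.
\end{enumerate}

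\textbf{The fixed block.} Each block has bounded depth and width. It evaluates the \(M\) selectors \(\chi_q(\widehat z_{j-1})\) (depth \(2\), by \Cref{lem:loop-state-selectors}(iii)) and the map \(F(\widehat z_{j-1})\) (depth \(2\), by \Cref{lem:loop-controller-map}). It forms the \(M\) linear maps \(T_q^\intercal\widehat\Phi_{j-1}\), which are absorbed into the next affine layer, and feeds these into \(M\) product gadgets of depth \(2\). The one subtlety is synchronization: the gadget must receive \(\chi_q(\widehat z_{j-1})\) and \(T_q^\intercal\widehat\Phi_{j-1}\) at the same layer. I would pad the \(\widehat\Phi\) channel with identity ReLU pairs while the selectors are computed, and likewise carry \(\widehat z_{j-1}\) alongside. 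Each block then has constant depth \(c\) and width \(O(M(pL)^2+C_\chi M+C_M)\), independent of \(n\). Summing the gadget outputs over \(q\) is affine and merges into the next layer.

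\textbf{Correctness.} Exactness follows from \Cref{lem:block-exactness}, provided the gadget's range hypothesis \(\|T_q^\intercal\widehat\Phi_{j-1}\|_\infty\le a\) holds. This is where care is needed, since \(a\) must be allowed to depend on \(n\). By \Cref{lem:block-exactness} and \Cref{lem:loop-state-selectors}(i), each \(\widehat M_j\) is a convex combination of the \(T_q\). Hence \(\|\Phi_j\|_\infty\le \|h\|_\infty\tau^j\) with \(\tau:=\max_q\|T_q^\intercal\|_{\infty\to\infty}\), and it suffices to take \(a=a_n:=\|h\|_\infty\max(1,\tau)^{n+1}\). (In the transition-free case each \(\widehat M_j\) is exactly one \(T_q\), and otherwise \(\Phi\equiv 0\), so even the convex-combination bound is more than needed.) The gadget remains an exact identity for every \(a\); only its biases grow, like \(a_n\). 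The output layer applies \(u_\mu^\intercal\) to each column, giving all \(pL\) coordinates of \(\vect(V^n\gamma)\).

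\textbf{Coefficient bounds.} The weights in the scalar phase are bounded independently of \(n\) by \Cref{prop:section3-hRn-loop}. The selector weights are at most \(C_{\chi,\mar}M^n\) by \Cref{lem:loop-state-selectors}(iii). The gadget biases are at most \(a_n\), and the \(T_q\) entries are fixed. Together these give the bound \(C_2\Lambda^n\) with \(\Lambda=\max(M,\tau,1)\).

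\textbf{Main obstacle.} I expect the hardest part to be this bookkeeping: keeping the synchronized carry channels within constant width, and checking that the \(n\)-dependent choices of \(\delta_n\) and \(a_n\) inflate only the coefficients, never the architecture. All of the width and depth constants must be read off from a block template that does not change with \(n\).
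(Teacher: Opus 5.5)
Your proposal is correct and follows essentially the same route as the paper: realize \(h(R^n)\) through the exact loop controller and the two readouts (\Cref{prop:section3-hRn-loop}), then concatenate \(n\) fixed copies of the block \eqref{eq:recursive-update-block-loop} with a gadget parameter \(a=a_n\) growing exponentially in \(n\), with exactness supplied by \Cref{lem:block-exactness}. Your matrix-valued state and the explicit carrying of \(x\) to restart \(\widehat z_0=E(x)\) only make explicit what the paper does by running \(pL\) parallel branches and concatenating. One minor quibble is that merging the selector output layer (weights of size \(\sim M^n\)) into the gadget's \(a\lambda\) term multiplies coefficients, so \(\Lambda\) may need to be about \(M\max(1,\tau)\) rather than \(\max(M,\tau,1)\); the statement allows this, since it only asserts that some \(\Lambda\) exists.
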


\begin{proof}
For each coordinate \(\ell=1,\dots,pL\), \Cref{prop:section3-hRn-loop} realizes the initial scalar factor
\(h(R^n)\) on \([0,1]\) with fixed width and depth \(O(n)\),
and with weights and biases bounded independently of \(n\).
Each recursive step \eqref{eq:recursive-update-block-loop} has width depending only on \(M\) and \(pL\),
and depth bounded independently of \(n\). Concatenating \(n\) such blocks therefore produces
\(g_{n,\ell}\) with fixed width and depth linear in \(n\).

Since the matrices \(T_q\) are fixed, there exists \(\Lambda_0>0\), depending only on the operator
data, such that every intermediate state in the recursion is bounded by \(C\Lambda_0^n\). Hence \(a\)
may be chosen so that
\[
a\le C'\Lambda_0^n
\]
for a constant \(C'\) independent of \(n\). The product gadget \(\Pi_a\) then has coefficients
bounded by \(C''\Lambda_0^n\),
while the selector and loop-controller branches contribute at most
the exponential bound coming from the selector width \(\delta_n\) and the fixed controller map.
Therefore each coordinate branch
\(g_{n,\ell}\) is realized by a network whose weights and biases are bounded by
\(C_2\Lambda^n\) for suitable constants \(C_2,\Lambda>0\) depending only on the stated data and the
fixed matrices \(A_j\).

Running the \(pL\) coordinate branches in parallel increases the width by at most a factor of \(pL\),
while preserving the linear-in-\(n\) depth bound and the same coarse exponential coefficient bound
after enlarging \(C_2\) if necessary. Thus \(\vect(V^n\gamma)=G^n\) belongs to
\(\Ups_{C_0,C_1n}(\ReLU;1,pL)\), with the stated weight and bias bound.
\end{proof}

\subsection{From atomic curves to general CPwL curves}
\label{subsec:section3-global}

We next pass from blockwise realizations on $[0,1]$ to a realization on the whole line.
For $k=1,\dots,L$, let $G^{n,k}:[0,1]\to\R^p$ denote the $k$th block of $G^n=\vect(V^n\gamma)$.
Then, for $t\in[k-1,k]$, we have
\[
V^n\gamma(t)=G^{n,k}(t-k+1).
\]
Define the ramp functions
\begin{equation}
\label{eq:ramp-functions}
\sigma_k(t)=\ReLU(t-k+1)-\ReLU(t-k),\qquad k=1,\dots,L.
\end{equation}

\begin{lemma}[Gluing lemma]
\label{lem:gluing}
Let $f_k:[0,1]\to\R^p$ be continuous functions satisfying
\[
f_1(0)=0,\quad
f_L(1)=0,\quad\text{and}\quad
f_k(1)=f_{k+1}(0)\qquad\text{for} \quad k=1,\dots,L-1 .
\]
Then the function
\begin{equation}
\label{eq:gluing-formula}
F(t)
=
f_1(\sigma_1(t))
+
\sum_{k=2}^L \bigl(f_k(\sigma_k(t))-f_k(0)\bigr)
\end{equation}
satisfies
\[
F(t)=f_k(t-k+1),\qquad t\in[k-1,k],\quad k=1,\dots,L,
\]
and vanishes outside $[0,L]$.
\end{lemma}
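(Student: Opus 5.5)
The plan is a direct case computation: first record the values of the ramp functions \(\sigma_k\) on each unit interval, then substitute them into \eqref{eq:gluing-formula} and let the matching conditions telescope. By \eqref{eq:ramp-functions}, \(\sigma_k(t)=0\) for \(t\le k-1\), \(\sigma_k(t)=t-k+1\) for \(t\in[k-1,k]\), and \(\sigma_k(t)=1\) for \(t\ge k\). Hence, for \(t\in[k-1,k]\), we have \(\sigma_i(t)=1\) when \(i<k\), \(\sigma_k(t)=t-k+1\), and \(\sigma_i(t)=0\) when \(i>k\). The boundary cases \(t=k-1\) and \(t=k\) are consistent, because the piecewise formulas for \(\sigma_i\) agree at those points.

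Next, fix \(k\) and \(t\in[k-1,k]\). Every term with \(i>k\) contributes \(f_i(0)-f_i(0)=0\). The terms with \(2\le i<k\) contribute \(f_i(1)-f_i(0)\), and the first term contributes \(f_1(1)\) if \(k\ge2\). The terms with \(i\le k-1\) therefore telescope. Using \(f_i(1)=f_{i+1}(0)\), one gets
\[
f_1(1)+\sum_{i=2}^{k-1}\bigl(f_i(1)-f_i(0)\bigr)=f_{k-1}(1)=f_k(0).
\]
Adding the \(k\)-th term \(f_k(t-k+1)-f_k(0)\) then leaves exactly \(f_k(t-k+1)\). For \(k=1\) there is nothing to telescope: only \(f_1(\sigma_1(t))=f_1(t)\) survives, because the other terms are \(f_i(0)-f_i(0)=0\).

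Finally, outside \([0,L]\) there are two cases. For \(t<0\), all \(\sigma_i(t)=0\), so \(F(t)=f_1(0)=0\). For \(t>L\), all \(\sigma_i(t)=1\), and the same telescoping gives \(F(t)=f_L(1)=0\).

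No step is a real obstacle. The only care needed is the bookkeeping of the telescoping sum and confirming that the closed-interval endpoints carry no ambiguity. The hypotheses \(f_1(0)=0\) and \(f_L(1)=0\) are used only for the vanishing outside \([0,L]\). Continuity of the \(f_k\) is not needed for the identity itself; it only matters for the later network realization.
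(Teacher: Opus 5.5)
Your proposal is correct and follows the paper's proof essentially verbatim. You evaluate the ramps $\sigma_j$ on $[k-1,k]$, substitute, telescope using $f_j(1)=f_{j+1}(0)$, and handle $t\le 0$ and $t\ge L$ via $f_1(0)=0$ and $f_L(1)=0$. Your separate treatment of $k=1$ is a small improvement, since the paper's displayed substituted expression beginning with $f_1(1)$ is literally valid only for $k\ge 2$.
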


\begin{proof}
Fix $t\in[k-1,k]$.  We have
\[
\sigma_j(t)=
\begin{cases}
1, & j<k,\\
t-k+1, & j=k,\\
0, & j>k.
\end{cases}
\]
Substituting into \eqref{eq:gluing-formula} gives
\[
F(t)
=
f_1(1)+\sum_{j=2}^{k-1}\bigl(f_j(1)-f_j(0)\bigr)+\bigl(f_k(t-k+1)-f_k(0)\bigr).
\]
By the endpoint matching conditions this telescopes to
\[
F(t)=f_k(t-k+1).
\]
For $t\le 0$, all $\sigma_k(t)=0$, so $F(t)=f_1(0)=0$.
For $t\ge L$, all $\sigma_k(t)=1$, so the same telescoping gives $F(t)=f_L(1)=0$.
\end{proof}

Applying this lemma to the blocks $G^{n,k}$ yields the global realization for atomic curves.

\begin{theorem}[Atomic curve on \(\R\)]
\label{thm:section3-special-R}
Let \(\gamma=h e_\mu\) be an atomic curve, and assume that \(h\) has at most \(m\) breakpoints.
Fix
\(0<\varepsilon<\mar\) and \(0<\bar\delta<1\).
Then there exist constants \(C_0,C_1>0\), depending only on \(M\), \(\mar\), \(\varepsilon\),
\(\bar\delta\), \(p\), \(L\), and \(m\), such that
\[
V^n\gamma\in \Ups_{C_0,C_1 n}(\ReLU;1,p),
\qquad n\ge 1.
\]
Moreover, for each fixed \(h\) and fixed matrices \(A_j\), the realizing networks may be chosen
with weights and biases bounded in absolute value by \(C_2\Lambda^n\), where \(C_2,\Lambda>0\)
depend on the CPwL data of \(h\), on the matrices \(A_j\), and on the preceding fixed parameters.
\end{theorem}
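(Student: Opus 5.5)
The plan is to combine \Cref{thm:section3-special-unit-interval} with the gluing lemma \Cref{lem:gluing}. Take the network from \Cref{thm:section3-special-unit-interval} realizing \(G^n=\vect(V^n\gamma)\) on \([0,1]\). Its \(k\)th block gives \(f_k:=G^{n,k}:[0,1]\to\R^p\), with \(f_k(x)=V^n\gamma(x+k-1)\). The first task is to check the hypotheses of \Cref{lem:gluing}.

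\emph{Continuity of the blocks.} Each \(f_k\) is continuous, since \(V^n\gamma\) is a finite sum of compositions of the continuous curve \(\gamma\) with affine maps.

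\emph{Matching at interior joints.} The condition \(f_k(1)=f_{k+1}(0)\) is just \(V^n\gamma(k)=V^n\gamma(k)\). One must be careful that the network realization agrees with \(G^n\) on all of the closed interval \([0,1]\). This holds because \Cref{prop:modified-cascade-identity-loop} and \Cref{lem:block-exactness} are stated for every \(x\in[0,1]\), including the endpoint convention \(R(1)=1\).

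\emph{Endpoint conditions.} The conditions \(f_1(0)=0\) and \(f_L(1)=0\) follow from support preservation \eqref{eq:support-preserve}, iterated \(n\) times, together with continuity: \(\supp V^n\gamma\subset[0,L]\), so \(V^n\gamma(0)=V^n\gamma(L)=0\).

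With the hypotheses verified, \Cref{lem:gluing} gives
\[
V^n\gamma(t)=f_1(\sigma_1(t))+\sum_{k=2}^L\bigl(f_k(\sigma_k(t))-f_k(0)\bigr)
\]
for all \(t\in\R\). The next task is to realize this formula with the right architecture.

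\emph{Input layer.} Each ramp \(\sigma_k\) is a width-2, one-hidden-layer ReLU map. Compute \((\sigma_1(t),\dots,\sigma_L(t))\) in one layer of width \(2L\). Note \(\sigma_k(t)\in[0,1]\), so the unit-interval networks are only ever evaluated where they are exact.

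\emph{Parallel branches.} Run \(L\) copies of the unit-interval network in parallel, the \(k\)th fed \(\sigma_k(t)\) and read at block \(k\). Copying each network in full (and keeping only block \(k\)) gives width at most \(L\cdot C_0\), still independent of \(n\). The other channels are carried along by identity through ReLU, for instance \(\sigma=\ReLU(\sigma)\) since \(\sigma_k\ge0\).

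\emph{Output layer.} The constants \(f_k(0)=V^n\gamma(k-1)\) are absorbed into the final affine layer's bias, and the branch outputs are summed there. This gives depth \(C_1n+O(1)\le C_1'n\), with constants depending only on the listed data.

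\emph{Coefficient bounds.} The constants \(f_k(0)\) are bounded by \(\sup|V^n\gamma|\le C\Lambda_0^n\), since \(\|V\|_\infty\le\sum_j\|A_j\|\). The remaining weights are inherited from \Cref{thm:section3-special-unit-interval} plus \(O(1)\) ramp and summation weights. So the \(C_2\Lambda^n\) bound persists after enlarging constants.

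The main obstacle I expect is bookkeeping rather than a conceptual gap. The subtle point is exactness of the unit-interval network at the endpoints \(x=0,1\), because the gluing needs exact values there. I would verify this by re-examining the seam at \(x=1\). There \(z_n(1)=E(1)=a_0\), and the selectors use \(\vartheta_{M-1}(1)=1\), consistent with \(Q(1)=M-1\). Also \(h(R^n(1))=h(1)=0\), so both sides vanish anyway and the endpoint values are exact.
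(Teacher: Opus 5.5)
Your proposal is correct and follows the same route as the paper. You realize each block \(G^{n,k}\) on \([0,1]\) via \Cref{thm:section3-special-unit-interval}, glue the blocks with \Cref{lem:gluing} using the ramps \(\sigma_k\) (one extra layer with \(n\)-independent coefficients), and sum the parallel branches while inheriting the \(C_2\Lambda^n\) coefficient bounds. You also explicitly verify the gluing hypotheses and the exactness at \(x=0,1\), which the paper leaves implicit. In fact, for an atomic curve \(V^n\gamma\) vanishes at every integer, so all the constants \(f_k(0)\) are zero.
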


\begin{proof}
By \Cref{thm:section3-special-unit-interval}, each block \(G^{n,k}\) is realized on \([0,1]\)
by a vector-valued ReLU network of width bounded independently of \(n\), depth \(O(n)\), and with
weights and biases bounded by \(C_2\Lambda^n\) for suitable constants \(C_2,\Lambda\) depending only
on the stated data and the fixed matrices \(A_j\).

The global curve \(V^n\gamma\) is obtained by gluing these blockwise realizations over the unit
intervals \([k-1,k]\), \(k=1,\dots,L\), via \Cref{lem:gluing}. The ramp functions \(\sigma_k\) require one
additional ReLU layer and have coefficients bounded independently of \(n\). Since the gluing formula
\eqref{eq:gluing-formula} uses only finitely many affine shifts and finite sums, the standard closure
properties of the network classes imply that
\[
V^n\gamma\in \Ups_{C_0,C_1n}(\ReLU;1,p)
\]
for suitable constants \(C_0,C_1\) with the stated dependence. Since the gluing step introduces only
\(O(1)\) additional affine operations with \(n\)-independent coefficients, the same coarse exponential
bound \(C_2\Lambda^n\) on the weights and biases is preserved after enlarging \(C_2\) if necessary.
\end{proof}

Finally we pass from atomic curves to arbitrary compactly supported CPwL curves.
By the atomic-curve decomposition from \S\ref{ss:special-hat-decomp},
every compactly supported CPwL curve \(\gamma:\R\to\R^p\) with \(\supp(\gamma)\subset[0,L]\)
can be written, after refining the breakpoint set if necessary, in the form
\begin{equation}
\label{eq:finite-hat-decomposition-curve}
\gamma(t)
=
\sum_{\mu=1}^p \sum_{\nu=1}^{N_\mu} a_{\mu,\nu}\, h_{\mu,\nu}(t-\delta_{\mu,\nu})\,e_\mu,
\end{equation}
where \(a_{\mu,\nu}\in\R\), \(\delta_{\mu,\nu}\in\R\), and each \(h_{\mu,\nu}\) is a special hat with at most
three breakpoints.

By translation covariance from \S\ref{ss:translation-covariance}, we have
\[
V^n\bigl(h(\cdot-\delta)e_\mu\bigr)(t)
=
V^n(he_\mu)(t-M^{-n}\delta).
\]
Thus each summand in \eqref{eq:finite-hat-decomposition-curve} is obtained from the atomic-curve
case by an affine input shift, and the general case follows by finite summation.

\begin{theorem}[Homogeneous main theorem]
\label{thm:section3-homogeneous-main}
Let \(\gamma:\mathbb R\to\mathbb R^p\) be a compactly supported CPwL curve with
\(\supp\gamma\subset[0,L]\), and assume that \(V\) preserves the support window \([0,L]\).
If \(\gamma\) has at most \(m\) breakpoints, then 
there exist constants \(C_0,C_1>0\), depending only on \(M,p,L\), and \(m\), such that
\[
V^n\gamma\in\Ups_{C_0,C_1n}(\ReLU;1,p),
\qquad n\ge1.
\]
Moreover, for each fixed \(\gamma\) and fixed matrices \(A_j\), the realizing networks may be
chosen with weights and biases bounded by \(C_2\Lambda^n\), for constants \(C_2,\Lambda>0\)
depending on the fixed CPwL data of \(\gamma\), the matrices \(A_j\), and the fixed construction
parameters.
\end{theorem}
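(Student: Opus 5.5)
The plan is to reduce to \Cref{thm:section3-special-R} using the decomposition \eqref{eq:finite-hat-decomposition-curve}, translation covariance, and finite summation. The main thing to control is that the number of terms, and hence the width, depends only on \(M,p,L,m\).

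\emph{Decomposition.} Apply \Cref{lem:finite-hat-decomposition} to \(\gamma\). The merged breakpoint set in \([0,L]\) has at most \(pm\) points, counting all coordinates. Refine it to a uniform-mesh refinement whose spacing is below \(\tfrac12(1-2\mar)\), so that every nodal hat fits inside \([\mar,1-\mar]\) after a translation. This adds at most \(O(L/(1-2\mar))\) nodes. Each coordinate then expands in at most \(N_*\le C(p,m,L,\mar)\) nodal hats, each with at most three breakpoints. This gives
\[
\gamma(t)=\sum_{\mu,\nu}a_{\mu,\nu}h_{\mu,\nu}(t-\delta_{\mu,\nu})e_\mu .
\]
Now fix \(\mar,\varepsilon,\bar\delta\) once, for example \(\mar=1/4\), \(\varepsilon=1/8\), \(\bar\delta=1/2\). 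The constants of \Cref{thm:section3-special-R} then depend only on \(M,p,L\), with \(m=3\).

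\emph{Support issue (main obstacle).} \Cref{thm:section3-special-R} is stated for atomic curves in the window \([0,L]\), with \(V\) preserving that window. A translated hat \(h_{\mu,\nu}(\cdot-\delta_{\mu,\nu})\) is supported in \([0,L]\), but the untranslated hat \(h_{\mu,\nu}e_\mu\) is supported in \([\mar,1-\mar]\subset[0,1]\subset[0,L]\). So the atomic curve itself lies in the window, and \Cref{thm:section3-special-R} applies to \(V^n(h_{\mu,\nu}e_\mu)\) directly. Here I only need that \(V\) maps curves supported in \([0,L]\) to curves supported in \([0,L]\), which is the standing assumption. The translated version is then \(V^n(h_{\mu,\nu}e_\mu)(t-M^{-n}\delta_{\mu,\nu})\) by \Cref{lem:translation-covariance}.

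One caveat: \(\delta_{\mu,\nu}\) may be negative or large, so the shifted copy lives in \([M^{-n}\delta,\,M^{-n}\delta+L]\). This is harmless, because the network for \(V^n(h e_\mu)\) from \Cref{thm:section3-special-R} is a realization on all of \(\R\); the gluing lemma makes it vanish outside \([0,L]\). Precomposing with the affine map \(t\mapsto t-M^{-n}\delta_{\mu,\nu}\) is absorbed into the first layer's weights and bias without changing width or depth (\Cref{rem:cpwl-basic}(iii)). It changes the biases only by a bounded amount, since \(|M^{-n}\delta|\le|\delta|\le L+1\).

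\emph{Summation and bounds.} Run the at most \(pN_*\) shifted networks in parallel and form the final output layer \(\sum a_{\mu,\nu}(\cdot)\) (\Cref{rem:cpwl-basic}(iv)). This gives width at most \(pN_*C_0'\) and depth \(C_1'n+O(1)\le C_1n\) for \(n\ge1\), with all constants depending only on \(M,p,L,m\). If the branch networks differ in depth, I pad them with identity layers via \(y=\ReLU(y)-\ReLU(-y)\), which at most doubles the width.

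Weight bounds come from the atomic case. Each atomic network has coefficients bounded by \(C_2\Lambda^n\) from \Cref{thm:section3-special-R}. The shifts and the coefficients \(a_{\mu,\nu}\) are fixed data of \(\gamma\), so the combined network obeys the bound \(C_2\Lambda^n\) after enlarging \(C_2\) and taking \(\Lambda\) as the maximum over the finitely many \(h_{\mu,\nu}\).
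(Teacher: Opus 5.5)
Your proposal is correct and follows the paper's proof: decompose \(\gamma\) into translated atomic curves, apply \Cref{thm:section3-special-R} to each unshifted atom \(h_{\mu,\nu}e_\mu\) (which lies in the window \([0,L]\)), pass to the shifted atom via \Cref{lem:translation-covariance} as an affine input shift, and sum the boundedly many branches. Your extra bookkeeping is also sound and fills in details the paper leaves implicit: the number of hats depends on \(L\) (and \(\mar\)) through the mesh refinement, not only on \(p\) and \(m\) as the paper states, and the shift changes the first-layer biases by at most \(|M^{-n}\delta_{\mu,\nu}|\) times the first-layer weights, which stays within \(C_2\Lambda^n\) after enlarging \(C_2\).
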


\begin{proof}
Decompose \(\gamma\) as in \eqref{eq:finite-hat-decomposition-curve}.
Apply \Cref{thm:section3-special-R} to each atomic summand, then use translation covariance
and finite summation.

Since each \(h_{\mu,\nu}\) has uniformly bounded breakpoint complexity, the constants in
\Cref{thm:section3-special-R} may be chosen uniformly over all summands. Moreover, the number of
summands in \eqref{eq:finite-hat-decomposition-curve} is bounded in terms of \(p\) and \(m\).
Affine input shifts do not change the asymptotic width/depth bounds or the coarse exponential bound
on weights and biases, and finite summation preserves these bounds after enlarging the constants if
necessary. This proves the claim.
\end{proof}

\section{Corollaries and applications}
\label{sec:corollaries-applications}

This section records several consequences of \Cref{thm:intro-homogeneous} and then turns to
geometric applications. Some reductions remain within the linear-depth scope of the homogeneous
theorem, notably homogeneous anchored constructions and homogeneous finite-state systems.
Other extensions, including affine and stage-dependent forcing rules, are obtained by weaker
black-box reductions from the homogeneous theorem. We begin with this stage-dependent forcing
result and then turn to anchored profiles, finite-state systems, and geometric examples.

\subsection{Stage-dependent forcing}
\label{ss:stage-dependent}

We first record a completely general black-box extension of the homogeneous theorem to affine and
stage-dependent forcing rules. No new controller mechanism is introduced here. Instead, one
expands the iterate into a finite sum of homogeneous terms, applies the homogeneous realization
theorem term by term, and then accumulates the outputs by finite summation. The price for this
generality is that the resulting depth bound is quadratic, \(O(n^2)\), rather than linear.

The ordinary affine case
\[
W\gamma=V\gamma+B
\]
is included as the special case \(B_r\equiv B\) of the stage-dependent family
\[
W_r\gamma=V\gamma+B_r.
\]
Thus no separate affine theorem is needed at this level. Even in the scalar case, however, this
affine viewpoint already enlarges the class of recursive functions that can be brought into the
framework: it opens the possibility of treating classical self-affine constructions such as
Takagi--Knopp-type functions \cite{takagi,knopp} and fractal interpolation functions \cite{barnsley}.

We begin by recording the elementary iterate formula behind this reduction.

\begin{lemma}[Iterate formula for stage-dependent forcing]
\label{lem:stage-dependent-iterate}
Let \(B_r:\R\to\R^p\) be compactly supported CPwL curves, and define
\begin{equation}
\label{eq:stage-dependent-map}
W_r\gamma := V\gamma + B_r,\qquad r\ge 0.
\end{equation}
Then, for every \(n\ge 1\), we have
\begin{equation}
\label{eq:stage-dependent-iterate}
W_{n-1}\cdots W_1W_0\gamma
=
V^n\gamma + \sum_{r=0}^{n-1} V^{\,n-1-r} B_r.
\end{equation}
\end{lemma}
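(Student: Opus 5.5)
The plan is a straightforward induction on \(n\). The only ingredient is that \(V\) is linear; this is immediate from its definition \eqref{eq:homogeneous-operator}, since each term \(A_j\gamma(Mt-j)\) depends linearly on \(\gamma\) and the sum is finite. Hence \(V^k\) is linear for every \(k\ge0\), with \(V^0\) the identity. I also fix the ordering convention that \(W_0\) is applied first, which is exactly what the left-hand side of \eqref{eq:stage-dependent-iterate} denotes.

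For the base case \(n=1\), the left-hand side is \(W_0\gamma=V\gamma+B_0\). The right-hand side is \(V\gamma+V^0B_0\). These agree.

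For the inductive step, assume \eqref{eq:stage-dependent-iterate} holds for some \(n\ge1\), and write \(\gamma_n:=W_{n-1}\cdots W_0\gamma\). Then
\[
W_n\gamma_n
=V\gamma_n+B_n
=V^{n+1}\gamma+\sum_{r=0}^{n-1}V^{\,n-r}B_r+B_n .
\]
The second equality uses the induction hypothesis together with linearity of \(V\), which lets \(V\) pass through the finite sum. Writing \(B_n=V^{(n+1)-1-n}B_n\) absorbs the last term as the \(r=n\) summand of \(\sum_{r=0}^{n}V^{\,(n+1)-1-r}B_r\). This is \eqref{eq:stage-dependent-iterate} with \(n+1\) in place of \(n\).

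No genuine obstacle is expected; the only points needing care are the application order and the index bookkeeping in the exponent \(n-1-r\). Support or CPwL hypotheses on \(B_r\) play no role in the identity itself. They matter only later, when each term \(V^{\,n-1-r}B_r\) is realized using \Cref{thm:section3-homogeneous-main}.
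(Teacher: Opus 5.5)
Your proof is correct and is the same induction the paper gives. In the inductive step, you expand \(W_n\gamma_n=V\gamma_n+B_n\) using the hypothesis and absorb \(B_n\) as the \(r=n\) summand. Your explicit appeal to the linearity of \(V\), which the paper uses only implicitly, is exactly what justifies passing \(V\) through the finite sum.
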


\begin{proof}
This is immediate by induction on \(n\). The case \(n=1\) is just
\[
W_0\gamma = V\gamma + B_0.
\]
If \eqref{eq:stage-dependent-iterate} holds for some \(n\), then
\[
W_nW_{n-1}\cdots W_0\gamma
=
V\!\left(V^n\gamma + \sum_{r=0}^{n-1}V^{\,n-1-r}B_r\right)+B_n
=
V^{n+1}\gamma + \sum_{r=0}^{n}V^{\,n-r}B_r,
\]
which is the same formula with \(n+1\) in place of \(n\).
\end{proof}

The preceding lemma shows that stage-dependent forcing rules reduce to a finite sum of homogeneous
iterates. We may therefore apply the homogeneous realization theorem term by term and then
accumulate the resulting outputs by finite summation. This gives a weaker but completely general
consequence of the homogeneous theory, with quadratic depth rather than linear depth.

\begin{theorem}[Stage-dependent forcing theorem]
\label{thm:black-box-stage-dependent}
Let \(\gamma:\R\to\R^p\) be a compactly supported CPwL curve, and for each \(r\ge 0\) let
\(B_r:\R\to\R^p\) be a compactly supported CPwL curve.
Assume that there exist \(L\ge 1\) and \(m\ge 0\) such that
\[
\supp(\gamma)\subset [0,L],\qquad \supp(B_r)\subset [0,L]\quad\text{for all }r\ge 0,
\]
and each \(B_r\) has at most \(m\) breakpoints.
Let $W_r$ be as in \eqref{eq:stage-dependent-map}.
Then there exist constants \(C_0,C_1>0\), depending only on \(M\), \(p\), \(L\), the breakpoint
data of \(\gamma\), and the uniform breakpoint bound \(m\), such that
\[
W_{n-1}\cdots W_1W_0\gamma
\in
\Ups_{C_0,C_1 n^2}(\ReLU;1,p),
\qquad n\ge 1.
\]
In particular, both affine rules \(W\gamma=V\gamma+B\) and stage-dependent rules
\(W_r\gamma=V\gamma+B_r\) with uniformly controlled compactly supported CPwL forcing terms admit
exact fixed-width ReLU realizations with quadratic depth \(O(n^2)\).
\end{theorem}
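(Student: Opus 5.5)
The plan is to start from the iterate formula of \Cref{lem:stage-dependent-iterate}, which gives
\[
W_{n-1}\cdots W_0\gamma = V^n\gamma+\sum_{r=0}^{n-1}V^{\,n-1-r}B_r .
\]
The right-hand side is a sum of \(n+1\) homogeneous iterates, and \Cref{thm:section3-homogeneous-main} applies to each of them. The first term \(V^n\gamma\) is realized with width \(C_0\) and depth \(C_1n\). Here the constants depend on \(M,p,L\) and the breakpoint data of \(\gamma\).

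For each forcing term, \(B_r\) has support in \([0,L]\) and at most \(m\) breakpoints. So \Cref{thm:section3-homogeneous-main} gives \(V^{n-1-r}B_r\in\Ups_{C_0',C_1'(n-1-r)}(\ReLU;1,p)\). The point to check is that \(C_0',C_1'\) depend only on \(M,p,L,m\) and not on \(r\). This uniformity is exactly what the main theorem provides, since its architectural constants depend only on breakpoint counts. Concretely, the number of atomic summands in \eqref{eq:finite-hat-decomposition-curve} is bounded in terms of \(p\) and \(m\), and the atomic networks of \Cref{thm:section3-special-R} have \(h\)-independent architecture. When \(n-1-r=0\), the term is just \(B_r\) itself, a fixed-complexity CPwL map, realizable with one hidden layer by \Cref{rem:cpwl-basic}(i).

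The main obstacle is combining \(n+1\) networks without the width growing with \(n\). Parallel stacking, as in \Cref{rem:cpwl-basic}(iv), would make the width grow linearly in \(n\), so I will compose the networks sequentially instead. The network carries the scalar input \(x\) and a \(p\)-dimensional running sum \(S\) through every layer. Since ReLU only passes nonnegative values, each carried channel is split as \(y=\ReLU(y)-\ReLU(-y)\), costing \(2(1+p)\) extra neurons per layer. Each summand is then evaluated in turn on the carried copy of \(x\), and its output is added to \(S\) at its final affine layer.

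The resulting width is \(\max(C_0,C_0')+2(p+1)\), which is independent of \(n\). The total depth is at most
\[
C_1n+\sum_{r=0}^{n-1}\bigl(C_1'(n-1-r)+O(1)\bigr)=O(n^2).
\]
Finally, terms with different depths need no padding, because each block has its own length in the sequential chain. This gives the claimed \(\Ups_{C_0,C_1n^2}(\ReLU;1,p)\) membership. The affine case is the specialization \(B_r\equiv B\).
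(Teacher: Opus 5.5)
Your proposal is correct and follows the paper's own proof essentially step for step. You expand the iterate via \Cref{lem:stage-dependent-iterate}, apply the homogeneous theorem with constants uniform in $r$ to each of the $n+1$ summands, and realize the sum by serial concatenation with a carried input channel and a running $p$-vector accumulator, so the width stays bounded while the depth adds up to $O(n^2)$. Your explicit handling of the $r=n-1$ summand, which is $B_{n-1}$ itself, and of the $\ReLU(y)-\ReLU(-y)$ channel-carrying mechanics fills in details the paper leaves implicit.
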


\begin{proof}
By \eqref{eq:stage-dependent-iterate}, the iterate \(W_{n-1}\cdots W_1W_0\gamma\) is a sum of one
homogeneous term \(V^n\gamma\) and \(n\) further homogeneous iterates \(V^{\,n-1-r}B_r\),
\(r=0,\dots,n-1\). The first term is realized exactly by \Cref{thm:intro-homogeneous} with fixed
width and depth \(O(n)\). Since each \(B_r\) is supported in \([0,L]\) and has at most \(m\)
breakpoints, the same theorem applies uniformly to every summand \(V^{\,n-1-r}B_r\), with depth
\(O(n-r)\).

We then realize the whole sum by serial concatenation: carry the input \(x\) through a source channel,
maintain a running \(p\)-vector accumulator, and add the summands one by one by affine updates.
Because each constituent network has width bounded independently of \(n\), the total width remains
bounded by a constant depending only on the stated data. The total depth is the sum of the depths
of the constituent realizations, and is therefore \(O(n^2)\). The output agrees exactly with
\(W_{n-1}\cdots W_1W_0\gamma\) by \eqref{eq:stage-dependent-iterate}. This establishes the proof.
\end{proof}

\subsection{Open curves via anchored profiles}
\label{subsec:anchored-profiles}

We now explain how anchored open-curve constructions fit into the same framework.
The guiding idea is to subtract a reference profile and work instead with the resulting compactly
supported defect. At the level of the one-step reduction below, affine and stage-dependent rules are
treated in exactly the same way: in both cases one obtains a compactly supported defect recursion
with a compactly supported forcing term. Upon iteration, the homogeneous case falls under
\Cref{thm:intro-homogeneous}, while the affine and stage-dependent cases fall under the
stage-dependent forcing theorem from \S\ref{ss:stage-dependent}.

The key point is that compact support is needed only for the \emph{defect} relative to a fixed
reference profile.
Let
\[
W\gamma=V\gamma+B,
\]
where \(V\) is the homogeneous refinement operator from Section~\ref{sec:homogeneous-extension}.
Write
\[
\gamma=\Gamma+\eta
\]
for a reference profile \(\Gamma\) and a defect \(\eta\).
Then
\(W\gamma=W\Gamma+V\eta\), and so if one sets
\[
E:=W\Gamma-\Gamma,
\]
then
\[
W\gamma=\Gamma+\bigl(V\eta+E\bigr).
\]
This suggests that if \(\eta\) and \(E\) are compactly supported and CPwL, then the iterate problem
for an open curve should reduce directly to \Cref{thm:black-box-stage-dependent}.

\begin{definition}
\label{def:anchored-profile}
A CPwL curve \(\Gamma:\R\to\R^p\) is called an \emph{anchor profile} if it is eventually constant at
both ends, i.e. there exist \(\Gamma_-,\Gamma_+\in\R^p\) and \(t_*>0\) such that
\[
\Gamma(t)=\Gamma_- \quad (t\le -t_*),
\qquad
\Gamma(t)=\Gamma_+ \quad (t\ge t_*).
\]
A curve \(\gamma:\R\to\R^p\) is said to be \emph{anchored by \(\Gamma\)} if
\(\gamma-\Gamma\)
is compactly supported and CPwL.
\end{definition}

Thus the anchor carries the endpoint data, while the compactly supported defect carries the local
geometry.

\begin{lemma}[Affine anchor-mismatch criterion]
\label{lem:affine-anchor-mismatch-criterion}
Let
\[
W\gamma=V\gamma+B
\]
with
\[
(V\gamma)(t)=\sum_{j\in\Z}A_j\gamma(Mt-j),
\qquad
S:=\sum_{j\in\Z}A_j.
\]
Let \(\Gamma:\R\to\R^p\) be CPwL and eventually constant at both ends:
\[
\Gamma(t)=\Gamma_- \quad (t\ll 0),
\qquad
\Gamma(t)=\Gamma_+ \quad (t\gg 0).
\]
Assume also that \(B:\R\to\R^p\) is CPwL and eventually constant at both ends:
\[
B(t)=B_- \quad (t\ll 0),
\qquad
B(t)=B_+ \quad (t\gg 0).
\]
Then the affine anchor mismatch
\(E=W\Gamma-\Gamma\)
is CPwL. Moreover, \(E\) is compactly supported if and only if
\begin{equation}
\label{eq:affine-anchor-endpoint-condition}
S\Gamma_-+B_-=\Gamma_-,
\qquad
S\Gamma_++B_+=\Gamma_+.
\end{equation}
\end{lemma}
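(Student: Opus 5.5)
The plan is to compute \(E=W\Gamma-\Gamma\) explicitly and read off its behaviour near \(\pm\infty\). Write
\[
E(t)=\sum_{j\in\Z}A_j\Gamma(Mt-j)+B(t)-\Gamma(t).
\]
The sum is finite because only finitely many \(A_j\) are nonzero. Each term \(t\mapsto A_j\Gamma(Mt-j)\) is the composition of the CPwL curve \(\Gamma\) with an affine map, followed by a linear map, so it is CPwL. Finite sums of CPwL maps are CPwL, and \(B,\Gamma\) are CPwL by assumption. Hence \(E\) is CPwL, by the closure facts in \Cref{rem:cpwl-basic}.

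The next step is to show that \(E\) is eventually constant at both ends. Let \(J\) be the finite set of indices with \(A_j\ne0\), and let \(t_*\) be large enough that \(\Gamma\) equals \(\Gamma_\pm\) and \(B\) equals \(B_\pm\) for \(\pm t\ge t_*\). For
\[
t\ge T_+:=\max\Bigl\{t_*,\ \max_{j\in J}(t_*+j)/M\Bigr\},
\]
we have \(Mt-j\ge t_*\) for every \(j\in J\), so \(\Gamma(Mt-j)=\Gamma_+\). Therefore
\[
E(t)=S\Gamma_++B_+-\Gamma_+ \qquad (t\ge T_+).
\]
Symmetrically, there is \(T_-\) such that
\[
E(t)=S\Gamma_-+B_--\Gamma_- \qquad (t\le T_-).
\]

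This gives both directions of the criterion. If \eqref{eq:affine-anchor-endpoint-condition} holds, both tail constants vanish. Then \(E\) vanishes outside the bounded interval \([T_-,T_+]\), so it is compactly supported. Conversely, if \(E\) is compactly supported, it vanishes for \(|t|\) large. Hence the constant tail values \(S\Gamma_\pm+B_\pm-\Gamma_\pm\) must be zero, which is exactly \eqref{eq:affine-anchor-endpoint-condition}.

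The only step needing care is the choice of the thresholds \(T_\pm\). Since \(M\ge2>0\) and \(J\) is finite, they can be chosen uniformly in \(j\). Everything else in the argument is direct substitution.
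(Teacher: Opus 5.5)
Your proposal is correct and follows the paper's argument: you get the CPwL property from closure under affine reparametrization and finite sums, then show the tails of $E$ are the constants $S\Gamma_\pm+B_\pm-\Gamma_\pm$, so compact support is equivalent to both vanishing. Your explicit thresholds $T_\pm$ simply make precise the paper's informal ``$t\ll0$, $t\gg0$'' step.
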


\begin{proof}
Since \(\Gamma\) and \(B\) are CPwL, so is \(E=W\Gamma-\Gamma=V\Gamma+B-\Gamma\).
To determine whether \(E\) is compactly supported, we examine its tails.

For \(t\ll 0\), every argument \(Mt-j\) is also far to the left, hence
\[
\Gamma(Mt-j)=\Gamma_-
\qquad\text{for all }j,
\]
and therefore
\[
(V\Gamma)(t)=\sum_{j\in\Z}A_j\Gamma_-=S\Gamma_-.
\]
Since also \(B(t)=B_-\) for \(t\ll 0\), we obtain
\[
E(t)=S\Gamma_-+B_- - \Gamma_-
\qquad (t\ll 0).
\]
Similarly, for \(t\gg 0\), we have
\[
E(t)=S\Gamma_++B_+ - \Gamma_+
\qquad (t\gg 0).
\]
Thus \(E\) has constant left and right tails, and it is compactly supported if and only if both tails
vanish, which is exactly \eqref{eq:affine-anchor-endpoint-condition}.
\end{proof}

\begin{remark}
\label{rem:affine-anchor-fixed-space}
The anchored-profile mechanism is therefore natural precisely when the endpoint values lie in the
affine fixed-point sets determined by the tail values of \(B\):
\[
\{v\in\R^p:\ Sv+B_-=v\},
\qquad
\{v\in\R^p:\ Sv+B_+=v\}.
\]
If \(B\) is compactly supported, these reduce to the ordinary fixed-point space
\[
\Fix(S)=\{v\in\R^p:\ Sv=v\}.
\]
In particular, if \(S=I_p\), then every eventually constant anchor profile is compatible with every
compactly supported forcing term.
\end{remark}

\begin{proposition}[Affine anchored-profile reduction]
\label{prop:affine-anchored-profile-reduction}
Let
\[
W\gamma=V\gamma+B
\]
be the affine refinement operator on \(\R^p\)-valued curves.
Assume there exists an anchor profile \(\Gamma:\R\to\R^p\) such that
\(E=W\Gamma-\Gamma\)
is compactly supported and CPwL.
Let
\[
\gamma=\Gamma+\eta,
\]
where \(\eta:\R\to\R^p\) is compactly supported and CPwL.
Then for every \(n\ge1\), we have
\begin{equation}
\label{eq:affine-anchored-reduction}
W^n\gamma=\Gamma+\bar W^{\,n}\eta ,
\end{equation}
where 
\(\bar W\xi:=V\xi+E\)
is an auxiliary affine operator.
\end{proposition}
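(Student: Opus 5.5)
The plan is a straightforward induction on \(n\). The one structural fact needed is that \(V\) is linear, so that \(W(\Gamma+\xi)=W\Gamma+V\xi\) for every curve \(\xi\). Indeed, \(W(\Gamma+\xi)=V\Gamma+V\xi+B=(V\Gamma+B)+V\xi\). Combining this with the definition \(E=W\Gamma-\Gamma\) gives the one-step identity
\[
W(\Gamma+\xi)=\Gamma+\bigl(V\xi+E\bigr)=\Gamma+\bar W\xi ,
\]
which holds for every curve \(\xi\) on which \(V\) is defined. No support assumption is needed here, since the sums over \(j\) are finite.

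For the base case \(n=1\), apply the identity with \(\xi=\eta\), which gives \(W\gamma=\Gamma+\bar W\eta\).

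For the inductive step, assume \(W^n\gamma=\Gamma+\bar W^{\,n}\eta\). Applying \(W\) and using the identity with \(\xi=\bar W^{\,n}\eta\) gives
\[
W^{n+1}\gamma=W\bigl(\Gamma+\bar W^{\,n}\eta\bigr)=\Gamma+\bar W^{\,n+1}\eta .
\]
This closes the induction and proves \eqref{eq:affine-anchored-reduction}.

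I would also add a short remark for later use: each \(\bar W^{\,n}\eta\) stays compactly supported and CPwL. This follows because \(V\) maps compactly supported CPwL curves to compactly supported CPwL curves, since only finitely many \(A_j\) are nonzero, and \(E\) is compactly supported and CPwL by hypothesis.

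There is no real obstacle. The only point that needs care is keeping the affine term \(B\) inside \(E\) rather than applying it twice. This is exactly why the computation expands \(W(\Gamma+\xi)\) directly instead of writing \(W^n\) as a composition of affine maps.
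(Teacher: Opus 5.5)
Your proof is correct and follows essentially the same route as the paper: both derive the one-step identity \(W(\Gamma+\xi)=\Gamma+\bar W\xi\) from the linearity of \(V\) and the definition of \(E\), and then close by induction on \(n\). Your added remark that \(\bar W^{\,n}\eta\) stays compactly supported and CPwL is not part of the paper's proof of this statement, but it is accurate and is what the subsequent open-curve corollary uses implicitly.
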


\begin{proof}
Since
\[
W(\Gamma+\xi)=W\Gamma+V\xi=\Gamma+E+V\xi=\Gamma+\bar W\xi,
\]
we obtain
\[
W(\Gamma+\eta)=\Gamma+\bar W\eta.
\]
Applying the same identity inductively gives
\[
W^n\gamma=W^n(\Gamma+\eta)=\Gamma+\bar W^{\,n}\eta
\]
for every \(n\ge1\).
\end{proof}

\begin{corollary}[Open-curve corollary]
\label{cor:affine-anchored-profile-cor}
Under the assumptions of \Cref{prop:affine-anchored-profile-reduction}, if the stage-dependent
forcing theorem of \S\ref{ss:stage-dependent} applies to the compactly supported defect \(\eta\) and
the compactly supported forcing term \(E\), then for every \(n\ge 1\) the iterate \(W^n\gamma\)
admits a fixed-width ReLU realization with quadratic depth \(O(n^2)\). In particular, this applies
to the ordinary affine case as the special case of constant forcing.
\end{corollary}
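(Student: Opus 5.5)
The plan is to combine \Cref{prop:affine-anchored-profile-reduction} with \Cref{thm:black-box-stage-dependent}, and then add the fixed anchor profile \(\Gamma\) at the output. First, by \eqref{eq:affine-anchored-reduction} we have \(W^n\gamma=\Gamma+\bar W^{\,n}\eta\) with \(\bar W\xi=V\xi+E\). The operator \(\bar W\) is precisely the stage-dependent operator \(W_r\xi=V\xi+B_r\) of \eqref{eq:stage-dependent-map}, with the constant choice \(B_r\equiv E\). By hypothesis \(\eta\) and \(E\) are compactly supported CPwL curves.

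Next, choose an integer window \([0,L]\) containing \(\supp\eta\) and \(\supp E\) on which \(V\) preserves support. If the given supports are not in \([0,L]\), first translate the input variable by an integer shift. Such a shift conjugates \(V\) to another homogeneous operator of the same form with reindexed matrices \(A_j\), and it costs nothing in width or depth (\Cref{rem:cpwl-basic}(iii)). This is exactly the situation the corollary assumes when it says the stage-dependent forcing theorem \emph{applies} to \(\eta\) and \(E\). Since the forcing is constant in \(r\), the uniform breakpoint bound \(m\) is simply the breakpoint count of \(E\). \Cref{thm:black-box-stage-dependent}, via \eqref{eq:stage-dependent-iterate}, then gives
\[
\bar W^{\,n}\eta=V^n\eta+\sum_{r=0}^{n-1}V^{\,n-1-r}E\in\Ups_{C_0,C_1n^2}(\ReLU;1,p).
\]

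Finally, add \(\Gamma\). It is a fixed CPwL map with finitely many breakpoints, since it is eventually constant, so it is realized by a fixed one-hidden-layer network by \Cref{rem:cpwl-basic}(i). To add it to the \(O(n^2)\)-depth network, carry \(\ReLU(x)\) and \(\ReLU(-x)\) through the layers as an identity channel, or equivalently carry a \(p\)-vector accumulator initialized with \(\Gamma(x)\). This follows the serial accumulator scheme already used in the proof of \Cref{thm:black-box-stage-dependent}. It adds only \(O(1)\) width and \(O(1)\) depth, and the output \(\Gamma(x)+\bar W^{\,n}\eta(x)=W^n\gamma(x)\) is exact. For the ordinary affine case, take \(W\) with constant forcing \(B\), which is the stated special case.

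The only delicate point is the support-window bookkeeping in the second step. One must make sure that, after the integer translation, a single window \([0,L]\) works simultaneously for \(\eta\), for \(E\), and for \(V\), so that the hypotheses of \Cref{thm:black-box-stage-dependent} hold. The corollary phrases this as an assumption, so the argument reduces to the chain of identities above.
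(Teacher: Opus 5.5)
Your proposal is correct and follows the paper's proof: the paper also combines \eqref{eq:affine-anchored-reduction} with \Cref{thm:black-box-stage-dependent}, applied to \(\bar W\) with constant forcing \(B_r\equiv E\). Your explicit steps, namely adding the fixed anchor \(\Gamma\) through an accumulator channel and moving the support window to \([0,L]\) by an integer shift, only fill in details that the paper's one-line proof leaves implicit.
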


\begin{proof}
This is immediate from \eqref{eq:affine-anchored-reduction}, upon applying
\Cref{thm:black-box-stage-dependent} to the auxiliary map \(\bar W\). This proves the claim.
\end{proof}

\begin{remark}
The affine anchor mismatch always decomposes as
\[
E=W\Gamma-\Gamma=(V\Gamma-\Gamma)+B=D+B,
\]
where
\[
D=V\Gamma-\Gamma.
\]
Thus the homogeneous anchor mismatch \(D\) acts as an additional forcing term.
\end{remark}

\subsection{Finite-state refinement, stacking, and readouts}
\label{subsec:finite-state-stacking}

Certain recursive constructions are not closed under a single stateless rule.  Instead, one must
remember a finite label describing the current type of the piece being refined; the Hilbert curve is
the canonical example, cf. \cite{Hilbert}.  The point of the present subsection is that such finite-state systems are not
outside the affine framework.  They are ordinary affine refinement operators written in a basis
adapted to a finite state space.

Let \(\cS=\{1,\dots,r\}\) be a finite state set, and let
\[
\Gamma=(\gamma_a)_{a\in\cS},
\qquad
\gamma_a:\R\to\R^p.
\]
For the Hilbert example, we would have $p=2$ and $r=4$.
Fix \(M\ge2\), matrices
\[
A_j^{ab}\in\R^{p\times p},
\qquad
a,b\in\cS,\quad j\in\Z,
\]
with \(A_j^{ab}=0\) for all but finitely many \(j\), and forcing terms
\[
B_a:\R\to\R^p,
\qquad a\in\cS.
\]
Define the finite-state affine refinement operator by
\[
(\mathfrak W\Gamma)_a(t)
=
\sum_{j\in\Z}\sum_{b=1}^r
A_j^{ab}\,\gamma_b(Mt-j)+B_a(t),
\qquad a=1,\dots,r.
\]
Its homogeneous part is
\[
(\mathfrak V\Gamma)_a(t)
=
\sum_{j\in\Z}\sum_{b=1}^r
A_j^{ab}\,\gamma_b(Mt-j).
\]

Now stack the state components into one curve
\[
\Stack(\Gamma)(t)
=
\begin{pmatrix}
\gamma_1(t)\\
\vdots\\
\gamma_r(t)
\end{pmatrix}
\in\R^{pr}.
\]
For each \(j\in\Z\), let \(\mathcal A_j\in\R^{pr\times pr}\) be the block matrix whose \((a,b)\)-block is
\(A_j^{ab}\), and let
\[
\mathcal B(t)=
\begin{pmatrix}
B_1(t)\\
\vdots\\
B_r(t)
\end{pmatrix}.
\]
These data define the ordinary affine refinement operator
\[
(\mathcal W\widetilde\Gamma)(t)
=
\sum_{j\in\Z}\mathcal A_j\,\widetilde\Gamma(Mt-j)+\mathcal B(t),
\qquad
\widetilde\Gamma:\R\to\R^{pr}.
\]
Its homogeneous part is
\[
(\mathcal V\widetilde\Gamma)(t)
=
\sum_{j\in\Z}\mathcal A_j\,\widetilde\Gamma(Mt-j).
\]

\begin{theorem}[Finite-state embedding]
\label{thm:section5-stateful-embedding}
For every state family \(\Gamma\),
\[
\Stack(\mathfrak W\Gamma)=\mathcal W(\Stack(\Gamma)).
\]
Consequently,
\[
\Stack(\mathfrak W^n\Gamma)=\mathcal W^n(\Stack(\Gamma)),
\qquad
\Stack(\mathfrak V^n\Gamma)=\mathcal V^n(\Stack(\Gamma)).
\]
\end{theorem}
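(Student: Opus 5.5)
The plan is a direct componentwise computation, followed by induction for the iterated statements. Fix \(t\in\R\) and a state index \(a\in\cS\). The \(a\)-th block (rows \(p(a-1)+1,\dots,pa\)) of \(\mathcal W(\Stack(\Gamma))(t)\) is
\[
\sum_{j\in\Z}\bigl(\mathcal A_j\,\Stack(\Gamma)(Mt-j)\bigr)_a+\mathcal B_a(t).
\]
By the definition of \(\mathcal A_j\) as the block matrix with \((a,b)\)-block \(A_j^{ab}\), block multiplication gives
\[
\bigl(\mathcal A_j\,\Stack(\Gamma)(Mt-j)\bigr)_a=\sum_{b=1}^r A_j^{ab}\gamma_b(Mt-j).
\]
By the definition of \(\mathcal B\), we have \(\mathcal B_a(t)=B_a(t)\). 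Summing over \(j\) yields exactly \((\mathfrak W\Gamma)_a(t)\), which is the \(a\)-th block of \(\Stack(\mathfrak W\Gamma)(t)\). All sums over \(j\) are finite because only finitely many \(A_j^{ab}\) are nonzero, so the sums over \(j\) and \(b\) may be interchanged. Since \(a\) and \(t\) are arbitrary, this proves \(\Stack(\mathfrak W\Gamma)=\mathcal W(\Stack(\Gamma))\).

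For the iterates, induct on \(n\); the case \(n=0\) is trivial. Assume \(\Stack(\mathfrak W^n\Gamma)=\mathcal W^n(\Stack(\Gamma))\). Applying the one-step identity to the state family \(\mathfrak W^n\Gamma\) gives
\[
\Stack(\mathfrak W^{n+1}\Gamma)=\mathcal W\bigl(\Stack(\mathfrak W^n\Gamma)\bigr)=\mathcal W^{n+1}(\Stack(\Gamma)).
\]
This is legitimate because the one-step identity holds for an arbitrary state family; no support or regularity hypothesis is needed. The homogeneous statement is the special case \(B_a\equiv0\). In that case \(\mathcal B\equiv0\), so \(\mathfrak W=\mathfrak V\) and \(\mathcal W=\mathcal V\), and the same argument applies verbatim.

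There is no real obstacle here. The only point requiring care is index bookkeeping: one must confirm that \(\Stack\) is a linear bijection between \(r\)-tuples of \(\R^p\)-valued curves and \(\R^{pr}\)-valued curves, so that it intertwines the two operators blockwise. One must also confirm that the ordering of the blocks in \(\mathcal A_j\) matches the ordering used in \(\Stack\).
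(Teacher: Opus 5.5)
Your proposal is correct and follows the paper's proof: you compute the \(a\)-th block of \(\mathcal W(\Stack(\Gamma))(t)\), identify it with \((\mathfrak W\Gamma)_a(t)\), induct on \(n\), and obtain the homogeneous case by setting the forcing to zero. The remark that \(\Stack\) is a bijection is harmless, but the induction does not need it.
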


\begin{proof}
The \(a\)-th block of \(\mathcal W(\Stack(\Gamma))(t)\) is
\[
\sum_{j\in\Z}\sum_{b=1}^r
A_j^{ab}\,\gamma_b(Mt-j)+B_a(t),
\]
which is exactly \((\mathfrak W\Gamma)_a(t)\).  This proves the first identity; the iterate formulas
follow immediately by induction.
\end{proof}

\begin{remark}[Deterministic state transitions]
A common special case is the deterministic one: for each state \(a\) and branch \(j\), exactly one
successor state \(\sigma(a,j)\) is active.  Then
\[
(\mathfrak W\Gamma)_a(t)
=
\sum_{j\in\Z}
C_{a,j}\,\gamma_{\sigma(a,j)}(Mt-j)+B_a(t),
\]
for suitable geometric matrices \(C_{a,j}\in\R^{p\times p}\).  Hilbert-type systems usually arise in
this form.
\end{remark}

The embedding theorem reduces the finite-state case directly to the stage-dependent forcing theorem
from \S\ref{ss:stage-dependent}, so the realizability consequence is immediate.
Assume that there exists \(L\ge1\) such that every state component \(\gamma_a\) and every forcing term
\(B_a\) is compactly supported and CPwL in \([0,L]\), and that the finite-state refinement preserves
this support window statewise. Then the stacked operator \(\mathcal W\) satisfies the standing
hypotheses of \Cref{thm:black-box-stage-dependent}, now with target dimension \(pr\).

\begin{corollary}[Finite-state realizability]
\label{cor:section5-network-consequence}
Under the above assumptions, if the initial state curves \(\gamma_a\) have at most \(m_\gamma\)
breakpoints each and the forcing terms \(B_a\) have at most \(m_B\) breakpoints each, then there
exist constants \(C_0,C_1>0\), depending only on \(M\), \(p\), \(r\), \(L\), \(m_\gamma\), and
\(m_B\), such that
\[
\Stack(\mathfrak W^n\Gamma)\in \Ups_{C_0,C_1n^2}(\ReLU;1,pr),
\qquad n\ge1.
\]
In particular, each state component \((\mathfrak W^n\Gamma)_a\) belongs to
\(\Ups_{C_0',C_1'n^2}(\ReLU;1,p)\), and the same conclusion holds for any fixed linear readout of
the stacked state.

If, in addition, all forcing terms vanish identically, so that the system is homogeneous, then the
same argument applied to the homogeneous theorem yields the stronger linear-depth bound
\(O(n)\).
\end{corollary}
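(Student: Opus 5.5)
The approach is to combine \Cref{thm:section5-stateful-embedding} with the black-box results already established, applied to the stacked curve. Set \(\widetilde\Gamma:=\Stack(\Gamma):\R\to\R^{pr}\). Its coordinates are the coordinates of the \(\gamma_a\), so \(\widetilde\Gamma\) is compactly supported, CPwL, and satisfies \(\supp\widetilde\Gamma\subset[0,L]\). Since every coordinate breakpoint is a breakpoint of some \(\gamma_a\), it has at most \(r m_\gamma\) breakpoints. Likewise \(\mathcal B=\Stack((B_a)_a)\) is compactly supported in \([0,L]\) with at most \(r m_B\) breakpoints.

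Next I check that the stacked homogeneous operator \(\mathcal V\) preserves the support window \([0,L]\). Blockwise, \((\mathcal V\widetilde\Gamma)_a=(\mathfrak V\Gamma)_a\), so this is exactly the assumed statewise support preservation of the finite-state refinement. I expect this to be the only genuinely delicate point. The hypothesis of the main theorem is that \(V\) maps \emph{every} curve supported in \([0,L]\) to one supported in \([0,L]\). This is a condition on the matrices \(\mathcal A_j\), namely that \(\mathcal A_j=0\) unless \(j\) lies in the range compatible with \([0,L]\). I will read the statewise assumption in that operator sense, so that the block matrices \(A^{ab}_j\) inherit the required vanishing pattern. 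Then \(\mathcal V\) qualifies as a homogeneous refinement operator in dimension \(pr\) with the same \(M\) and \(L\).

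With these hypotheses in place, \Cref{thm:section5-stateful-embedding} gives \(\Stack(\mathfrak W^n\Gamma)=\mathcal W^n\widetilde\Gamma\). This equals \(W_{n-1}\cdots W_0\widetilde\Gamma\) with the constant forcing \(B_r\equiv\mathcal B\). \Cref{thm:black-box-stage-dependent}, applied in target dimension \(pr\), then yields membership in \(\Ups_{C_0,C_1n^2}(\ReLU;1,pr)\). The constants depend on \(M\), \(pr\), \(L\), and the breakpoint bounds \(rm_\gamma\) and \(rm_B\), hence only on \(M,p,r,L,m_\gamma,m_B\).

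For a single state component, or any fixed linear readout \(P\in\R^{k\times pr}\), I compose the output layer with \(P\). This adds no depth and no width, so \((\mathfrak W^n\Gamma)_a\in\Ups_{C_0',C_1'n^2}(\ReLU;1,p)\).

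In the homogeneous case \(B_a\equiv0\), \Cref{thm:section5-stateful-embedding} gives \(\Stack(\mathfrak V^n\Gamma)=\mathcal V^n\widetilde\Gamma\). I then apply \Cref{thm:section3-homogeneous-main} directly in dimension \(pr\), instead of going through the stage-dependent theorem, which yields the linear-depth bound \(O(n)\).
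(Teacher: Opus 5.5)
Your proposal is correct and follows the paper's own argument. You embed via \Cref{thm:section5-stateful-embedding}, apply \Cref{thm:black-box-stage-dependent} to the stacked operator \(\mathcal W\) in dimension \(pr\), compose with fixed linear readouts, and invoke the homogeneous theorem when \(\mathcal B\equiv 0\); your extra bookkeeping (the \(rm_\gamma\), \(rm_B\) breakpoint counts and the check that \(\mathcal V\) preserves the window \([0,L]\)) only makes explicit what the paper's two-line proof leaves implicit.
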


\begin{proof}
Apply \Cref{thm:black-box-stage-dependent} to the embedded operator \(\mathcal W\), and then
compose with fixed linear projections or other fixed linear readouts as needed. The homogeneous
improvement is obtained in the special case \(\mathcal B\equiv0\) by applying
\Cref{thm:intro-homogeneous} instead. This proves the claim.
\end{proof}

\begin{example}[Gosper curve \cite{mandelbrot}, \Cref{fig:example}]
\label{ex:gosper-finite-state}
The Gosper curve is naturally encoded as a homogeneous finite-state refinement with two states
\(S=\{A,B\}\) and \(M=7\).
Set
\[
\phi:=\arctan(\sqrt3/5),
\]
and define the branch-angle patterns
\[
\theta_A=\Bigl(0,-\frac\pi3,-\pi,-\frac{2\pi}3,0,0,\frac\pi3\Bigr),
\qquad
\theta_B=\Bigl(\frac\pi3,0,0,-\frac{2\pi}3,-\pi,-\frac\pi3,0\Bigr).
\]
For \(j=0,\dots,6\), let
\[
C_{A,j}:=\frac1{\sqrt7}R_{\phi+\theta_{A,j}},
\qquad
C_{B,j}:=\frac1{\sqrt7}R_{\phi+\theta_{B,j}}.
\]
Let the deterministic state transitions be
\[
\sigma(A,\cdot)=(A,B,B,A,A,A,B),
\qquad
\sigma(B,\cdot)=(A,B,B,B,A,A,B),
\]
and take the endpoint-extended unit segment as the stage-zero state:
\[
\gamma_{A,0}(t)=\gamma_{B,0}(t)=
\begin{cases}
0, & t\le 0,\\
(t,0), & 0\le t\le 1,\\
(1,0), & t\ge 1.
\end{cases}
\]
Then the Gosper recursion is
\[
(\mathfrak V\Gamma)_a(t)
=
\sum_{j=0}^{6} C_{a,j}\,\gamma_{\sigma(a,j)}(7t-j),
\qquad a\in\{A,B\},
\]
for the state vector \(\Gamma=(\gamma_A,\gamma_B)\).

The endpoint-extended class is preserved statewise, since with \(e=(1,0)\) one has
\[
\sum_{j=0}^{6} C_{A,j}e=e,
\qquad
\sum_{j=0}^{6} C_{B,j}e=e.
\]
Thus one may subtract the straight unit-segment anchor profile in each state and pass to compactly
supported defects. The resulting defect recursion is an affine finite-state system, with forcing term
given by the anchor mismatch. Hence, by \Cref{cor:section5-network-consequence}, the stage-\(n\)
geometric Gosper iterates admit exact fixed-width ReLU realizations with quadratic depth
\(O(n^2)\).
\end{example}

\subsection{Homogeneous polygonal generators}
\label{subsec:section6-homogeneous-polygonal}

Let us turn to geometric examples of the preceding corollary framework, beginning with a
forcing-free class of homogeneous polygonal generators. Here one prescribes a polygonal chain and
chooses linear maps so that the refined curve runs through its vertices. This already captures
Koch-, dragon-, and Hilbert-type homogeneous prototypes, and illustrates both the geometric
flexibility and the structural constraints in the choice of the maps \(A_j\).

We now describe a natural class of recursive polygonal rules that fit the homogeneous theory with no
forcing term.
We start from a polygonal chain
\[
P_0,P_1,\dots,P_M\in\R^p,
\]
and pick the endpoint-extended class of curves
\begin{equation}
\label{eq:section6-endpoint-extended-class}
\gamma(t)=P_0 \quad (t\le 0),
\qquad
\gamma(t)=P_M \quad (t\ge 1).
\end{equation}
Without loss of generality, we may assume 
\(P_0 = 0\) and \(P_M=e:=e_1\), where \(e_1\) is the first standard unit vector in \(\R^p\).
We seek matrices \(A_0,\dots,A_{M-1}\in\R^{p\times p}\) such that the homogeneous refinement
\begin{equation}
\label{eq:section6-homogeneous-generator}
(V\gamma)(t):=\sum_{j=0}^{M-1}A_j\gamma(Mt-j)
\end{equation}
has, on each subinterval
\(I_j:=[\frac{j}{M},\frac{j+1}{M}]\),
the form of a transformed copy of the whole curve running from \(P_j\) to \(P_{j+1}\).

The required compatibility is simply
\begin{equation}
\label{eq:section6-edge-condition}
A_j e=P_{j+1}-P_j,
\qquad j=0,\dots,M-1.
\end{equation}
This constrains only the image of the distinguished endpoint direction \(e\), so the construction is
typically far from unique.
If one further asks each \(A_j\) to be a similarity carrying the unit segment to the oriented edge
\(P_{j+1}-P_j\), then the scale is fixed by \(|P_{j+1}-P_j|\), while residual rotational freedom
remains: in the planar case one has orientation-preserving or orientation-reversing choices, and in
higher dimensions one has an orthogonal freedom on \(e^\perp\).

\begin{proposition}[Homogeneous polygonal generator]
\label{prop:section6-homogeneous-generator}
Let \(P_0=0,P_1,\dots,P_{M-1},P_M=e\in\R^p\), and let \(A_0,\dots,A_{M-1}\in\R^{p\times p}\) satisfy
\eqref{eq:section6-edge-condition}.
Then for every curve \(\gamma\) satisfying \eqref{eq:section6-endpoint-extended-class}, the
homogeneous refinement operator \eqref{eq:section6-homogeneous-generator} obeys
\begin{equation}
\label{eq:section6-piecewise-generator}
(V\gamma)(t)=P_j+A_j\gamma(Mt-j),
\qquad t\in I_j.
\end{equation}
In particular, the refined curve runs through the polygonal vertices
\(P_0,P_1,\dots,P_M\).
Moreover, we have
\[
Se=e,
\qquad\text{with}\qquad
S=\sum_{j=0}^{M-1}A_j .
\]
Hence the anchored endpoint criterion of \Cref{lem:affine-anchor-mismatch-criterion} is satisfied for the endpoint-extended class
\eqref{eq:section6-endpoint-extended-class}: the left endpoint \(0\) is automatic, and the right
endpoint condition is exactly \(Se=e\).
\end{proposition}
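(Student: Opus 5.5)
The plan is a direct pointwise computation. We fix \(t\in I_j=[\frac jM,\frac{j+1}M]\) and evaluate each summand \(A_i\gamma(Mt-i)\) of \eqref{eq:section6-homogeneous-generator} using the endpoint-extended convention \eqref{eq:section6-endpoint-extended-class}. Since \(Mt-j\in[0,1]\), the argument \(Mt-i\) splits into three cases.
\begin{itemize}
\item For \(i<j\) we have \(Mt-i\ge j-i\ge1\), so \(\gamma(Mt-i)=P_M=e\).
\item For \(i>j\) we have \(Mt-i\le j+1-i\le 0\), so \(\gamma(Mt-i)=P_0=0\).
\item For \(i=j\) the term \(A_j\gamma(Mt-j)\) remains.
\end{itemize}
Hence
\[
(V\gamma)(t)=\sum_{i=0}^{j-1}A_ie+A_j\gamma(Mt-j).
\]
Applying \eqref{eq:section6-edge-condition} turns the first sum into the telescoping sum \(\sum_{i<j}(P_{i+1}-P_i)=P_j-P_0=P_j\). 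This gives \eqref{eq:section6-piecewise-generator}.

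The consistency at the shared breakpoints \(t=j/M\), where the two adjacent formulas must agree, is automatic because the expression above was derived for each closed \(I_j\) from a single globally defined sum. A direct check confirms it: on \(I_{j-1}\) at its right end the formula gives \(P_{j-1}+A_{j-1}e=P_j\), and on \(I_j\) at its left end it gives \(P_j+A_j\cdot0=P_j\).

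Passage through the vertices is then read off by evaluating at \(t=j/M\): this gives \(P_j+A_j\gamma(0)=P_j\), and at \(t=1\) it gives \(P_{M-1}+A_{M-1}e=P_M\).

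For the last claim, summing \eqref{eq:section6-edge-condition} over \(j\) gives \(Se=\sum_j(P_{j+1}-P_j)=P_M-P_0=e\). We then match this with \Cref{lem:affine-anchor-mismatch-criterion}, taking \(B\equiv0\) (so \(B_\pm=0\)), \(\Gamma_-=0\) and \(\Gamma_+=e\). The left condition \(S\cdot0=0\) holds trivially, and the right condition is exactly \(Se=e\).

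No step is a real obstacle. The only point needing care is the boundary case analysis \(Mt-i\ge1\) versus \(\le0\), including the closed-interval endpoints, and it is handled by the inequalities above.
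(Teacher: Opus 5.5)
Your proof is correct and follows the paper's argument essentially step for step. It uses the same case split of \(Mt-i\) on \(I_j\), the same telescoping via \eqref{eq:section6-edge-condition}, the same summation giving \(Se=e\), and the same application of \Cref{lem:affine-anchor-mismatch-criterion} with \(\Gamma_-=0\), \(\Gamma_+=e\) (and \(B\equiv0\)); your explicit breakpoint and vertex checks only spell out what the paper leaves as ``in particular.''
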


\begin{proof}
Fix \(t\in I_j\).
Then \(Mt-j\in[0,1]\), while \(Mt-m\ge1\) for \(m<j\) and \(Mt-m\le0\) for \(m>j\).
Using \eqref{eq:section6-endpoint-extended-class}, we obtain
\[
\gamma(Mt-m)=e \quad (m<j),
\qquad
\gamma(Mt-m)=0 \quad (m>j),
\]
and therefore
\[
(V\gamma)(t)=\sum_{m<j}A_m e + A_j\gamma(Mt-j).
\]
By \eqref{eq:section6-edge-condition}, we have
\[
\sum_{m<j}A_m e
=
\sum_{m<j}(P_{m+1}-P_m)
=
P_j,
\]
which proves \eqref{eq:section6-piecewise-generator}.
Finally, we conclude
\[
Se=\sum_{j=0}^{M-1}A_j e
=
\sum_{j=0}^{M-1}(P_{j+1}-P_j)
=
P_M-P_0
=
e.
\]
The last statement now follows from \Cref{lem:affine-anchor-mismatch-criterion} with \(\Gamma_-=0\) and \(\Gamma_+=e\).
\end{proof}

The identity \(Se=e\) connects the endpoint-extended rule to the compactly supported theory.  If
\(\Gamma\) is an anchor profile with endpoint values \(0\) and \(e\), then the homogeneous anchor
mismatch \(D:=V\Gamma-\Gamma\) satisfies the tail condition in
\Cref{lem:affine-anchor-mismatch-criterion}, hence \(D\) is compactly supported.  Therefore, for
\(\gamma=\Gamma+\eta\) with compactly supported defect \(\eta\), the anchored reduction gives
\[
V^n\gamma=\Gamma+\bar W^{\,n}\eta,
\qquad
\bar W\xi:=V\xi+D.
\]
If \(D=0\), the defect evolves homogeneously and \(\eta\)-iterates have the linear-depth bound of
\Cref{thm:intro-homogeneous}; in general, \Cref{thm:black-box-stage-dependent} gives the quadratic-depth bound.

\begin{example}[Dragon curves \cite{Levy,DK}]
Take \(p=2\), \(M=2\), and
\[
P_0=(0,0),\qquad
P_1=\Big(\frac12,\frac12\Big),\qquad
P_2=(1,0).
\]
One convenient choice is
\[
A_0=\frac{1}{\sqrt2}R_{\pi/4},
\qquad
A_1=\frac{1}{\sqrt2}R_{-\pi/4}.
\]
This gives a standard forcing-free two-piece dragon-type recursion (L\'evy-C).  Replacing one of the two
rotations by its reflected version produces the mirror variant (Heighway); in higher dimensions one obtains
further families from the remaining orthogonal freedom.
\end{example}

\begin{example}[Koch curve \cite{Koch}]
Take \(p=2\), \(M=4\), and
\[
P_0=(0,0),\quad
P_1=\Big(\frac13,0\Big),\quad
P_2=\Big(\frac12,\frac{\sqrt3}{6}\Big),\quad
P_3=\Big(\frac23,0\Big),\quad
P_4=(1,0).
\]
With \(R_\theta\) denoting the rotation matrix through angle $\theta$, choose
\[
A_0=\frac13R_0,\qquad
A_1=\frac13R_{\pi/3},\qquad
A_2=\frac13R_{-\pi/3},\qquad
A_3=\frac13R_0.
\]
Then \(A_j e=P_{j+1}-P_j\), so \Cref{prop:section6-homogeneous-generator} gives the usual
forcing-free homogeneous Koch recursion.
\end{example}

\begin{example}[Hilbert-type recursion \cite{Hilbert}, \Cref{fig:example}]
\label{ex:hilbert-type}
Take \(p=2\), \(M=4\), and
\[
P_0=(0,0),\quad
P_1=\Big(0,\frac12\Big),\quad
P_2=\Big(\frac12,\frac12\Big),\quad
P_3=\Big(1,\frac12\Big),\quad
P_4=(1,0).
\]
Choose
\[
A_0=\frac12
\begin{pmatrix}
0&1\\
1&0
\end{pmatrix},
\qquad
A_1=\frac12
\begin{pmatrix}
1&0\\
0&1
\end{pmatrix},
\qquad
A_2=\frac12
\begin{pmatrix}
1&0\\
0&1
\end{pmatrix},
\qquad
A_3=\frac12
\begin{pmatrix}
0&-1\\
-1&0
\end{pmatrix}.
\]
Thus \(A_0\) is one-half of the reflection through the line \(x=y\), \(A_1\) and \(A_2\) are one-half of
the identity, and \(A_3\) is the \(180^\circ\)-rotated version of \(A_0\).
Since
\[
A_0e=\Big(0,\frac12\Big),\qquad
A_1e=\Big(\frac12,0\Big),\qquad
A_2e=\Big(\frac12,0\Big),\qquad
A_3e=\Big(0,-\frac12\Big),
\]
we indeed have
\[
A_j e=P_{j+1}-P_j,
\qquad j=0,1,2,3.
\]
Therefore \Cref{prop:section6-homogeneous-generator} gives a forcing-free homogeneous four-piece
Hilbert-type recursion.

Unlike the standard Hilbert construction, this version uses no additional connector geometry, so the
resulting curve is self-intersecting and geometrically less natural.  Nevertheless, it is a useful
homogeneous prototype showing that Hilbert-type orientation patterns already fit the forcing-free
framework at the level of the linear copy maps.
\end{example}

\begin{figure}[htpb]
    \centering
    \includegraphics[height=0.45\textwidth]{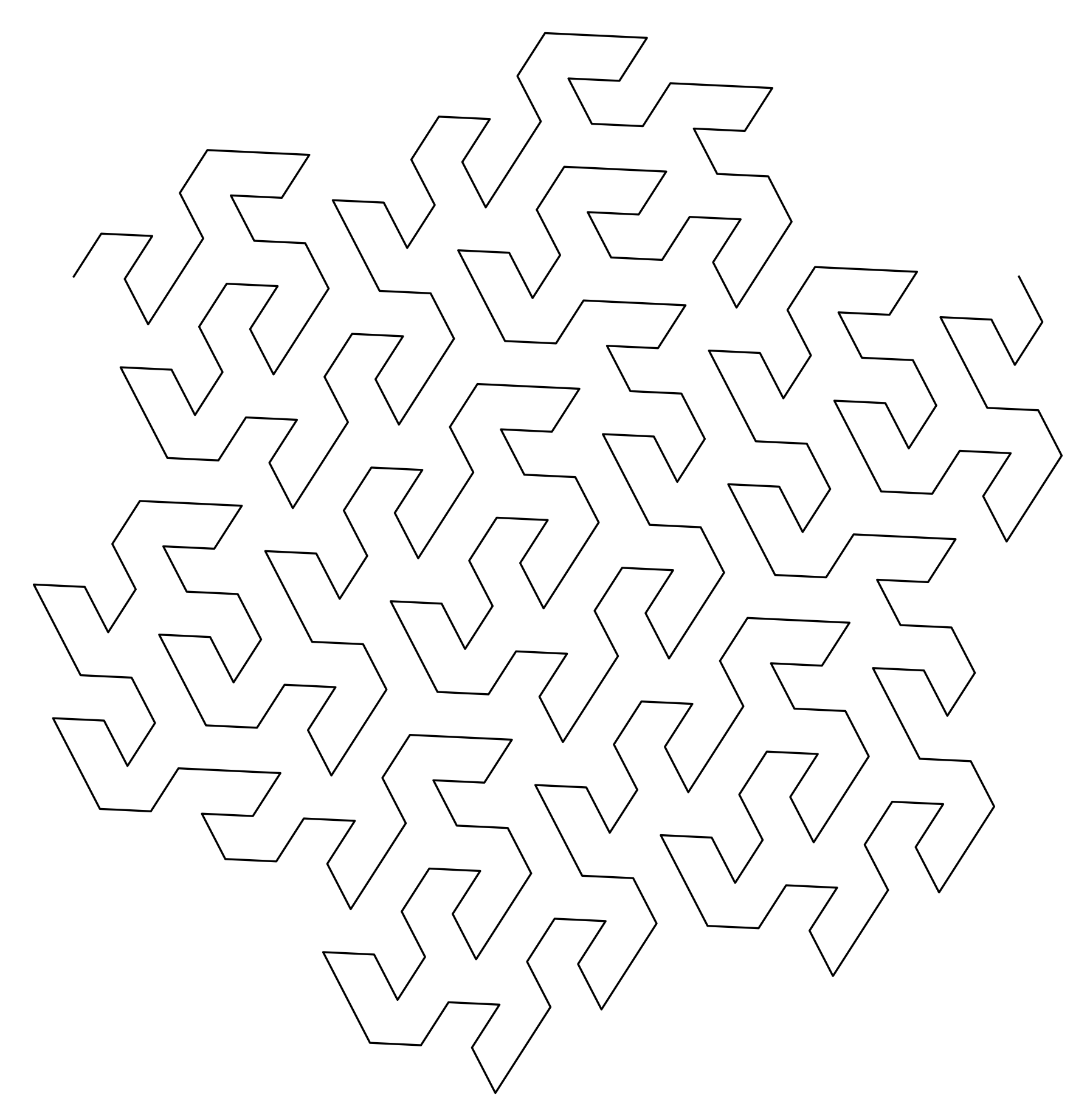}
    \quad
    \includegraphics[height=0.45\textwidth]{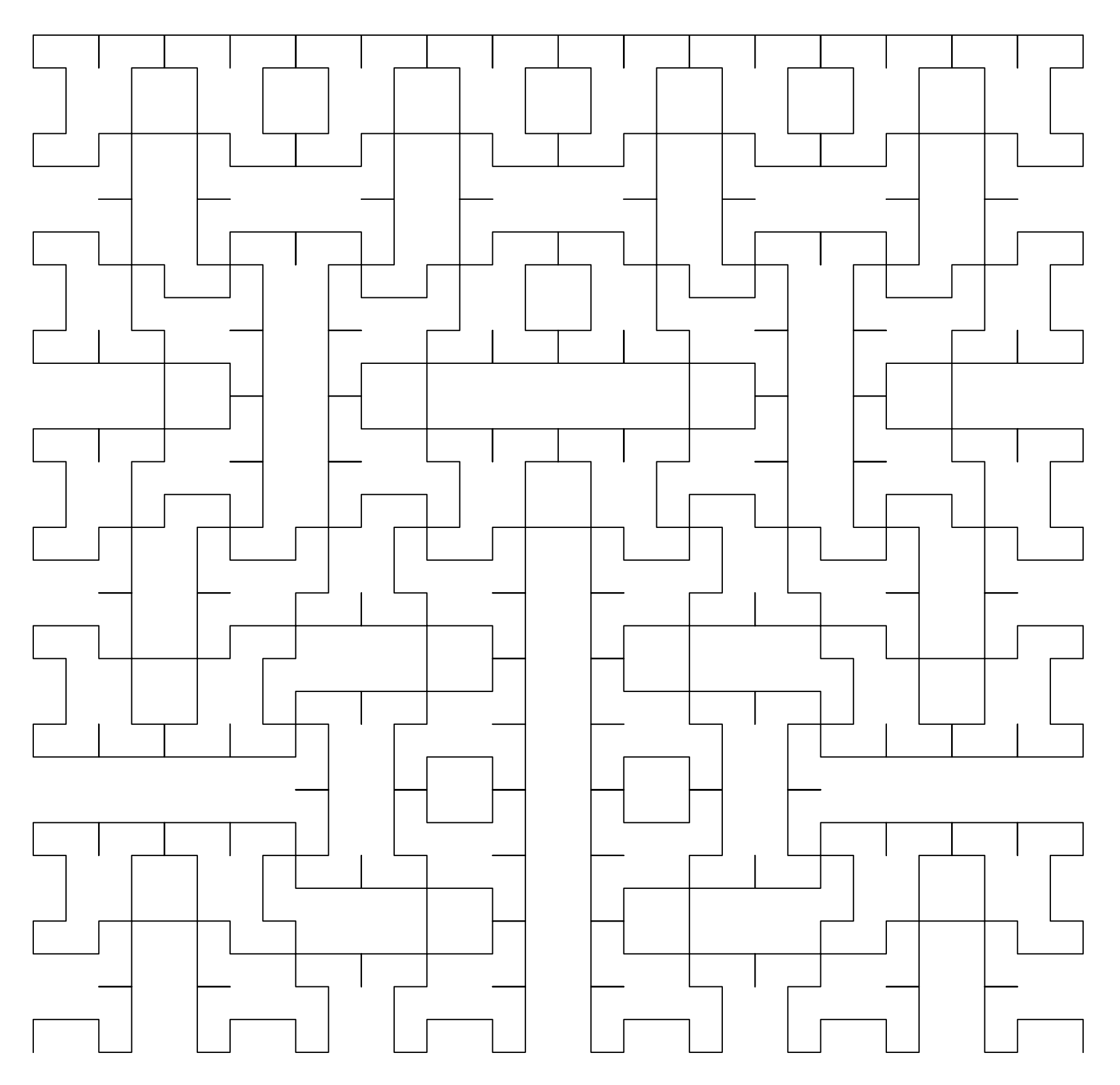}
    \caption{Gosper curve (Example~\ref{ex:gosper-finite-state}) and self-intersecting Hilbert-type curve (Example~\ref{ex:hilbert-type}).}
    \label{fig:example}
\end{figure}

\begin{example}[Hilbert-type recursion in \(\R^p\)]
Let \(p\ge2\), and set \(M=2^p\).
We describe an inductive homogeneous prototype based on the reflected Hilbert symmetry.

For \(p=1\), take
\[
\textstyle P_0^{(1)}=0,\qquad P_1^{(1)}=\frac12,\qquad P_2^{(1)}=1,
\]
and let \(\cube_0^{(1)}=[0,\tfrac12]\), \(\cube_1^{(1)}=[\tfrac12,1]\).

Now assume inductively that, in dimension \(p-1\), we have already constructed a chain
\[
P_0^{(p-1)},P_1^{(p-1)},\dots,P_{2^{p-1}}^{(p-1)}\in[0,1]^{p-1},
\]
from \(0\) to \(e_1\), together with tagged dyadic subcubes
\[
\cube_0^{(p-1)},\cube_1^{(p-1)},\dots,\cube_{2^{p-1}-1}^{(p-1)}.
\]
Write \(n:=2^{p-1}\).
We define the \(p\)-dimensional chain
\[
P_0^{(p)},P_1^{(p)},\dots,P_{2n}^{(p)}\in[0,1]^p
\]
by
\[
P_j^{(p)} = 
\begin{cases}
\bigl(0,P_j^{(p-1)}\bigr) & \text{for}\quad0\le j\le n-1, \\
\bigl(\frac12,P_{n-1}^{(p-1)}\bigr) & \text{for}\quad j = n, \\
\bigl(1,P_{2n-j}^{(p-1)}\bigr) & \text{for}\quad n+1\le j\le 2n.
\end{cases}
\]
Thus the lower copy of the \((p-1)\)-dimensional chain is followed up to its last segment, that
last segment is replaced by a two-step bridge across the first coordinate, and the upper copy is then
traversed in reverse order.

The tagged dyadic subcubes are defined recursively by
\[
\cube_j^{(p)} = 
\begin{cases}
[0,\tfrac12]\times \cube_j^{(p-1)} & \text{for}\quad0\le j\le n-1, \\
[\tfrac12,1]\times \cube_{2n-1-j}^{(p-1)} & \text{for}\quad n\le j\le 2n-1.
\end{cases}
\]
For each \(j=0,\dots,2n-1\), the segment
\([P_j^{(p)},P_{j+1}^{(p)}]\)
is an edge of the dyadic subcube \(\cube_j^{(p)}\), with \(P_j^{(p)}\) as one of its vertices.
Choose a signed permutation matrix \(U_j^{(p)}\in O(p)\) such that
\[
P_j^{(p)}+\frac12\,U_j^{(p)}[0,1]^p = \cube_j^{(p)},
\qquad
\frac12\,U_j^{(p)}e_1=P_{j+1}^{(p)}-P_j^{(p)}.
\]
We then set
\[
A_j^{(p)}:=\frac12\,U_j^{(p)},
\qquad j=0,\dots,2^p-1.
\]
By construction,
\[
A_j^{(p)}e_1=P_{j+1}^{(p)}-P_j^{(p)},
\qquad j=0,\dots,2^p-1,
\]
so \Cref{prop:section6-homogeneous-generator} yields a forcing-free homogeneous
\(2^p\)-piece Hilbert-type recursion in \(\R^p\).

The choice of \(U_j^{(p)}\) is not unique in general for \(p\ge3\), since the image of \(e_1\) fixes
only the tagged edge direction.
To make the choice canonical, write
\[
U_j^{(p-1)}=\bigl[u_j\ \ \widehat U_j\bigr],
\]
where \(u_j\in\R^{p-1}\) is the first column and
\(\widehat U_j\in\R^{(p-1)\times(p-2)}\) collects the remaining columns.
Then define \(U_j^{(p)}\) recursively by
\[
U_j^{(p)}=
\begin{cases}
\begin{pmatrix}
0 & 1 & 0_{1\times(p-2)}\\[0.3em]
u_j & 0 & \widehat U_j
\end{pmatrix},
& 0\le j\le n-2,\\[1.2em]
\begin{pmatrix}
1 & 0_{1\times(p-1)}\\[0.3em]
0_{(p-1)\times1} & U_{n-1}
\end{pmatrix},
& j=n-1,n,\\[1.2em]
\begin{pmatrix}
0 & -1 & 0_{1\times(p-2)}\\[0.3em]
-\,u_{\,2n-1-j} & 0 & \widehat U_{\,2n-1-j}
\end{pmatrix},
& n+1\le j\le 2n-1.
\end{cases}
\]
For \(0\le j\le n-2\), this inserts the new first-coordinate direction \(+e_1\) as the second
column; for the two middle bridge cubes \(j=n-1,n\), the tagged edge itself is \(+e_1\); and for
\(n+1\le j\le 2n-1\), the upper half uses the lower-dimensional matrices in reverse order, with the
tagged edge reversed and the new first-coordinate direction \(-e_1\).
This fixes \(U_j^{(p)}\) uniquely.
\end{example}

\begin{remark}[Affine extension with prescribed ports and connectors]
\label{rem:section6-affine-ported}
The homogeneous construction above extends naturally to an affine one when the consecutive copies do
not meet directly.
Indeed, one may prescribe entry ports
\[
P_0:=0,\qquad P_1,\dots,P_{\ell-1}\in\R^p,
\]
choose matrices \(A_0,\dots,A_{\ell-1}\in\R^{p\times p}\), and define the corresponding exit ports by
\[
Q_j:=P_j+A_j e,
\qquad j=0,\dots,\ell-1.
\]
If one also prescribes CPwL connector curves joining \(Q_j\) to \(P_{j+1}\), then, by subdividing
the parameter interval into alternating copy and connector intervals, one obtains a stationary affine
refinement rule of the form
\[
(W\gamma)(t)=\sum_{j=0}^{\ell-1}A_j\gamma(Mt-2j)+B(t),
\qquad M=2\ell-1,
\]
with a fixed CPwL forcing term \(B\) encoding the port translations and connectors.

The anchored endpoint criterion of \Cref{lem:affine-anchor-mismatch-criterion} is satisfied by
construction.
The left endpoint condition is automatic, since the forcing term is taken to vanish on the left tail.
For the right endpoint, one either imposes
\(Q_{M-1}=e\),
or inserts one final connector joining \(Q_{M-1}\) to \(e\).
\end{remark}

\subsection{Stage-dependent geometric generators}
\label{subsec:section6-stage-dependent}

We now turn to the direct geometric recursions that arise in Morton- and Hilbert-type constructions.
Unlike the stationary affine extension discussed in \Cref{rem:section6-affine-ported}, these rules
typically do \emph{not} preserve fixed endpoints: the stage-\(n\) geometric curve begins and ends at
locations that drift with \(n\) (for instance, at cell centers). Accordingly, one should not impose a
fixed endpoint condition such as \(\gamma_n(0)=0\) and \(\gamma_n(1)=e\) at the geometric level.
Instead, one works with the stage-\(n\) geometric curves themselves and subtracts a stage-dependent
anchor profile, producing compactly supported defects. The corresponding defect recursion then falls
under the stage-dependent forcing theorem \Cref{thm:black-box-stage-dependent}.

Fix an integer \(\ell\ge2\), and set
\(M=2\ell-1\).
Thus the parameter interval is subdivided into \(\ell\) copy intervals and \(\ell-1\) connector intervals:
\[
I_j:=\Big[\frac{2j}{M},\frac{2j+1}{M}\Big],
\qquad j=0,\dots,\ell-1,
\]
and
\[
J_j:=\Big[\frac{2j+1}{M},\frac{2j+2}{M}\Big],
\qquad j=0,\dots,\ell-2.
\]
For each \(j=0,\dots,\ell-1\), fix an affine map
\begin{equation}
\label{eq:section6-visible-affine-copy-map}
F_j(x):=A_jx+u_j,
\qquad
A_j\in\R^{p\times p},\quad u_j\in\R^p.
\end{equation}

Let \(\gamma_n:[0,1]\to\R^p\) denote the stage-\(n\) geometric curve, and write
\begin{equation}
\label{eq:section6-visible-endpoints}
a_n:=\gamma_n(0),
\qquad
b_n:=\gamma_n(1).
\end{equation}
For each stage \(n\), define the entry and exit ports of the \(j\)-th copy by
\begin{equation}
\label{eq:section6-visible-ports}
P_j^{(n)}:=F_j(a_n),
\qquad
Q_j^{(n)}:=F_j(b_n),
\qquad j=0,\dots,\ell-1.
\end{equation}
For \(j=0,\dots,\ell-2\), let
\[
\zeta_j^{(n)}:[0,1]\to\R^p
\]
be a CPwL connector curve satisfying
\begin{equation}
\label{eq:section6-visible-connector-endpoints}
\zeta_j^{(n)}(0)=Q_j^{(n)},
\qquad
\zeta_j^{(n)}(1)=P_{j+1}^{(n)}.
\end{equation}
We then define the stage-\((n+1)\) geometric curve \(\gamma_{n+1}\) by the piecewise rule
\begin{align}
\gamma_{n+1}(t)
&=
F_j(\gamma_n(Mt-2j)),
&& t\in I_j,\quad j=0,\dots,\ell-1,
\label{eq:section6-visible-copy-rule}
\\
\gamma_{n+1}(t)
&=
\zeta_j^{(n)}(Mt-(2j+1)),
&& t\in J_j,\quad j=0,\dots,\ell-2.
\label{eq:section6-visible-connector-rule}
\end{align}

\begin{proposition}[Geometric stage-dependent generator]
\label{prop:section6-visible-generator}
The rule \eqref{eq:section6-visible-copy-rule}--\eqref{eq:section6-visible-connector-rule} has the
following properties.
For each \(j=0,\dots,\ell-1\), on the copy interval \(I_j\) the refined curve is exactly the image
of the whole previous-stage curve under the affine map \(F_j\), while on each connector interval
\(J_j\) it follows the prescribed connector \(\zeta_j^{(n)}\).
In particular, \(\gamma_{n+1}\) runs through the sequence
\[
P_0^{(n)},\ Q_0^{(n)},\ P_1^{(n)},\ Q_1^{(n)},\ \dots,\ P_{\ell-1}^{(n)},\ Q_{\ell-1}^{(n)},
\]
and therefore its endpoints are
\begin{equation}
\label{eq:section6-visible-endpoint-update}
a_{n+1}=P_0^{(n)}=F_0(a_n),
\qquad
b_{n+1}=Q_{\ell-1}^{(n)}=F_{\ell-1}(b_n).
\end{equation}
\end{proposition}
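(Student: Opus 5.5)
The plan is to read everything off the defining rule \eqref{eq:section6-visible-copy-rule}--\eqref{eq:section6-visible-connector-rule} by evaluating it at the subdivision points $t=k/M$. First, for $t\in I_j$ the substitution $s=Mt-2j$ is an affine bijection of $I_j$ onto $[0,1]$. The copy rule then says that $\gamma_{n+1}|_{I_j}$ is the reparametrization of $F_j\circ\gamma_n$. Hence its image is exactly $F_j(\gamma_n([0,1]))$. Likewise, for $t\in J_j$ the substitution $s=Mt-(2j+1)$ maps $J_j$ onto $[0,1]$, so $\gamma_{n+1}|_{J_j}$ is the reparametrized connector $\zeta_j^{(n)}$. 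This gives the first two assertions directly.

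Next I would evaluate the endpoint values on each piece. At the left end $t=2j/M$ of $I_j$ we have $s=0$, so the copy rule gives $F_j(\gamma_n(0))=F_j(a_n)=P_j^{(n)}$ by \eqref{eq:section6-visible-endpoints}--\eqref{eq:section6-visible-ports}. At the right end $t=(2j+1)/M$ we have $s=1$, giving $F_j(b_n)=Q_j^{(n)}$. On $J_j$, the connector rule gives $\zeta_j^{(n)}(0)=Q_j^{(n)}$ at $t=(2j+1)/M$ and $\zeta_j^{(n)}(1)=P_{j+1}^{(n)}$ at $t=(2j+2)/M$, by \eqref{eq:section6-visible-connector-endpoints}.

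The one point requiring care is that adjacent intervals share endpoints, so the piecewise definition must be consistent there. The computations above show exactly this: at $t=(2j+1)/M$ both $I_j$ and $J_j$ give $Q_j^{(n)}$, and at $t=(2j+2)/M$ both $J_j$ and $I_{j+1}$ give $P_{j+1}^{(n)}$. So $\gamma_{n+1}$ is well defined and continuous. It is also CPwL, provided $\gamma_n$ is, since each piece is a composition of CPwL maps.

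Ordering the intervals $I_0,J_0,I_1,\dots,J_{\ell-2},I_{\ell-1}$ from left to right, the curve passes through $P_0^{(n)},Q_0^{(n)},P_1^{(n)},\dots,P_{\ell-1}^{(n)},Q_{\ell-1}^{(n)}$ in that order. For the endpoints, $0$ is the left end of $I_0$, so $a_{n+1}=P_0^{(n)}=F_0(a_n)$. Since $2(\ell-1)+1=M$, the point $1$ is the right end of $I_{\ell-1}$, so $b_{n+1}=Q_{\ell-1}^{(n)}=F_{\ell-1}(b_n)$. This proves \eqref{eq:section6-visible-endpoint-update}. None of these steps is a real obstacle; the only thing to watch is the consistency at the shared endpoints.
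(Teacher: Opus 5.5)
Your proposal is correct and takes the same route as the paper, whose proof simply reads the piecewise identities off the defining rule and evaluates them at the interval endpoints. Your explicit check that the pieces agree at the shared points $(2j+1)/M$ and $(2j+2)/M$, which makes $\gamma_{n+1}$ well defined, fills in a detail the paper leaves implicit. So does your observation that $2(\ell-1)+1=M$, which places $t=1$ at the right end of $I_{\ell-1}$.
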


\begin{proof}
The piecewise identities are exactly \eqref{eq:section6-visible-copy-rule} and
\eqref{eq:section6-visible-connector-rule}. Evaluating them at the interval endpoints shows that
\(\gamma_{n+1}\) visits the displayed sequence of ports, and the endpoint identities
\eqref{eq:section6-visible-endpoint-update} are just its first and last terms.
\end{proof}

To bring this into the scope of our realizability theorems, we pass to anchored defects.
For each \(n\ge0\), let \(\Gamma_n:\R\to\R^p\) be a CPwL anchor profile satisfying
\begin{equation}
\label{eq:section6-visible-anchor}
\Gamma_n(t)=a_n \quad (t\le0),
\qquad
\Gamma_n(t)=b_n \quad (t\ge1),
\end{equation}
and define the defect curve
\begin{equation}
\label{eq:section6-visible-defect}
\eta_n:=\gamma_n-\Gamma_n.
\end{equation}
Then \(\eta_n\) is compactly supported in \([0,1]\).

\begin{proposition}[Anchored reduction]
\label{prop:section6-visible-reduction}
Let \(\eta_n\) be defined by \eqref{eq:section6-visible-defect}. Then
\begin{equation}
\label{eq:section6-visible-affine-recursion}
\eta_{n+1}(t)
=
\sum_{j=0}^{\ell-1}A_j\,\eta_n(Mt-2j)+B_n(t),
\qquad t\in\R,
\end{equation}
where the forcing term \(B_n:\R\to\R^p\) is compactly supported and CPwL, and is given on
\([0,1]\) by
\begin{align}
B_n(t)
&=
F_j\bigl(\Gamma_n(Mt-2j)\bigr)-\Gamma_{n+1}(t),
&& t\in I_j,\quad j=0,\dots,\ell-1,
\label{eq:section6-visible-forcing-copy}
\\
B_n(t)
&=
\zeta_j^{(n)}(Mt-(2j+1))-\Gamma_{n+1}(t),
&& t\in J_j,\quad j=0,\dots,\ell-2,
\label{eq:section6-visible-forcing-connector}
\end{align}
and by \(B_n(t)=0\) for \(t\notin[0,1]\).
\end{proposition}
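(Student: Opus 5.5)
The plan is to \emph{define} \(B_n\) as the discrepancy
\[
B_n(t):=\eta_{n+1}(t)-\sum_{j=0}^{\ell-1}A_j\,\eta_n(Mt-2j),\qquad t\in\R,
\]
so that \eqref{eq:section6-visible-affine-recursion} holds by construction. It then remains to compute \(B_n\) explicitly and to show that it is compactly supported and CPwL.

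Throughout, \(\eta_n\) is regarded as a function on \(\R\) by extending \(\gamma_n\) by \(\Gamma_n\) outside \([0,1]\), so \(\eta_n\equiv0\) off \([0,1]\). Since \(\gamma_n(0)=a_n=\Gamma_n(0)\) and \(\gamma_n(1)=b_n=\Gamma_n(1)\) by \eqref{eq:section6-visible-endpoints} and \eqref{eq:section6-visible-anchor}, \(\eta_n\) vanishes at \(0\) and \(1\). Hence it is continuous on \(\R\) and CPwL, provided \(\gamma_n\) is CPwL on \([0,1]\).

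\textbf{Copy intervals.} Fix \(t\in I_j\). Then \(Mt-2j\in[0,1]\). For \(i<j\) we have \(Mt-2i\ge 2\), and for \(i>j\) we have \(Mt-2i\le -1\), so only the \(j\)-th term of the sum survives. Using \eqref{eq:section6-visible-copy-rule} and \(F_j(x)-A_jx=u_j\), I expect
\[
B_n(t)=F_j(\gamma_n(s))-\Gamma_{n+1}(t)-A_j\gamma_n(s)+A_j\Gamma_n(s)=F_j(\Gamma_n(s))-\Gamma_{n+1}(t),
\qquad s:=Mt-2j.
\]
This is \eqref{eq:section6-visible-forcing-copy}.

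\textbf{Connector intervals.} Fix \(t\in J_j\), so \(Mt\in[2j+1,2j+2]\). Every argument satisfies either \(Mt-2i\ge1\) (for \(i\le j\)) or \(Mt-2i\le0\) (for \(i>j\)). All terms of the sum therefore vanish, because \(\eta_n\) is zero at \(0\), at \(1\), and off \([0,1]\). Then \eqref{eq:section6-visible-connector-rule} gives \eqref{eq:section6-visible-forcing-connector}.

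\textbf{Outside \([0,1]\).} For \(t<0\), all arguments \(Mt-2i\) are negative. For \(t>1\), \(Mt-2i>M-2(\ell-1)=1\). In both cases the sum vanishes, and \(\eta_{n+1}(t)=0\) as well, so \(B_n=0\) there.

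\textbf{Regularity.} The main point requiring care is that \(B_n\) is genuinely CPwL, which amounts to the continuity of \(\gamma_{n+1}\) and hence of \(\eta_{n+1}\). Continuity of \(\gamma_{n+1}\) at the interior junctions \((2j+1)/M\) and \((2j+2)/M\) follows from the connector endpoint conditions \eqref{eq:section6-visible-connector-endpoints} together with \(F_j(\gamma_n(1))=Q_j^{(n)}\) and \(F_{j+1}(\gamma_n(0))=P_{j+1}^{(n)}\). At \(t=0\) and \(t=1\), the endpoint update \eqref{eq:section6-visible-endpoint-update} gives \(\gamma_{n+1}(0)=a_{n+1}=\Gamma_{n+1}(0)\) and likewise at \(1\). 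So \(\eta_{n+1}\) is continuous and vanishes off \([0,1]\).

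Consequently \(B_n\) is a finite combination of continuous CPwL functions composed with affine maps, hence CPwL, and it is supported in \([0,1]\). As a consistency check, the explicit formulas give \(B_n(0)=F_0(a_n)-a_{n+1}=0\) and \(B_n(1)=F_{\ell-1}(b_n)-b_{n+1}=0\).
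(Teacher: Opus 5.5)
Your proposal is correct and follows essentially the same route as the paper: on each \(I_j\) you substitute \(\gamma_n=\eta_n+\Gamma_n\) into the copy rule and subtract \(\Gamma_{n+1}\), and on each \(J_j\) you subtract \(\Gamma_{n+1}\) from the connector rule. You also make explicit what the paper leaves implicit. Only the \(j\)-th summand survives on \(I_j\), and none survive on \(J_j\) or off \([0,1]\), because \(\eta_n\) vanishes at \(0\), at \(1\), and outside \([0,1]\); moreover, continuity at the junctions, which follows from the port/connector endpoint conditions and the endpoint update, is what makes \(B_n\) genuinely CPwL.
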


\begin{proof}
On each copy interval \(I_j\), substitute \(\gamma_n=\eta_n+\Gamma_n\) into
\eqref{eq:section6-visible-copy-rule} and subtract \(\Gamma_{n+1}\). On each connector interval
\(J_j\), subtract \(\Gamma_{n+1}\) from \eqref{eq:section6-visible-connector-rule}. This gives
\eqref{eq:section6-visible-affine-recursion} together with the formulas
\eqref{eq:section6-visible-forcing-copy}--\eqref{eq:section6-visible-forcing-connector}.
The compact support and CPwL character of \(B_n\) are immediate from these piecewise formulas.
\end{proof}

The stage-dependent forcing theorem applies once the forcing terms \(B_n\) lie in a fixed finite-
dimensional CPwL family. For the straight-anchor and straight-connector choices used below, this
reduces to a condition on the endpoint sequences alone.

\begin{proposition}[Endpoint templates imply forcing templates]
\label{prop:section6-visible-fixed-span}
Assume that there exist scalar sequences \(\lambda_{n,\alpha}\in\R\) (\(n\ge0\), \(\alpha=0,\dots,N\)),
with
\[
\lambda_{n,0}\equiv1,
\]
and vectors \(a^{(\alpha)},b^{(\alpha)}\in\R^p\) such that
\begin{equation}
\label{eq:section6-endpoint-template-expansion}
a_n=\sum_{\alpha=0}^N \lambda_{n,\alpha}a^{(\alpha)},
\qquad
b_n=\sum_{\alpha=0}^N \lambda_{n,\alpha}b^{(\alpha)}.
\end{equation}
Choose the anchor profiles by the straight-anchor rule
\[
\Gamma_n(t):=a_n+\theta(t)\bigl(b_n-a_n\bigr),
\qquad
\theta(t):=
\begin{cases}
0, & t\le0,\\
t, & 0\le t\le1,\\
1, & t\ge1,
\end{cases}
\]
and, for each \(j=0,\dots,\ell-2\), choose the connector \(\zeta_j^{(n)}\) to be the straight segment
from \(Q_j^{(n)}\) to \(P_{j+1}^{(n)}\):
\[
\zeta_j^{(n)}(s):=(1-s)Q_j^{(n)}+sP_{j+1}^{(n)},
\qquad 0\le s\le1.
\]

Then there exist compactly supported CPwL curves
\[
B^{(0)},\dots,B^{(N)}:\R\to\R^p
\]
such that
\begin{equation}
\label{eq:section6-visible-forcing-span}
B_n=\sum_{\alpha=0}^N \lambda_{n,\alpha}B^{(\alpha)},
\qquad n\ge0.
\end{equation}
Consequently, the defect recursion \eqref{eq:section6-visible-affine-recursion} falls within the
scope of \Cref{thm:black-box-stage-dependent}.
\end{proposition}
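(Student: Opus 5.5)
The plan is to show that every ingredient of the piecewise formulas \eqref{eq:section6-visible-forcing-copy}--\eqref{eq:section6-visible-forcing-connector} depends on \(n\) only through the endpoints \(a_n,b_n,a_{n+1},b_{n+1}\), and that this dependence is linear. Then the template expansion \eqref{eq:section6-endpoint-template-expansion} can be substituted term by term.

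\textbf{Step 1: express stage \(n+1\) endpoints in the template.} By \eqref{eq:section6-visible-endpoint-update}, \(a_{n+1}=A_0a_n+u_0\) and \(b_{n+1}=A_{\ell-1}b_n+u_{\ell-1}\). The difficulty is that \(\Gamma_{n+1}\) involves \(a_{n+1},b_{n+1}\), and we must expand these with the same coefficients \(\lambda_{n,\alpha}\), not \(\lambda_{n+1,\alpha}\). We use \(\lambda_{n,0}\equiv1\) to absorb the translations. Concretely,
\[
a_{n+1}=\sum_\alpha\lambda_{n,\alpha}\bigl(A_0a^{(\alpha)}+[\alpha=0]u_0\bigr),
\]
and \(b_{n+1}\) has the analogous expansion. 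Similarly, each port is an affine image of an endpoint: \(P_j^{(n)}=A_ja_n+u_j\) and \(Q_j^{(n)}=A_jb_n+u_j\). Each therefore expands as \(\sum_\alpha\lambda_{n,\alpha}(\text{fixed vector}_\alpha)\), with every translation \(u_j\) attached to the \(\alpha=0\) slot.

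\textbf{Step 2: write each piece of \(B_n\) as a linear combination of endpoint vectors.} The scalar coefficients must be fixed CPwL functions of \(t\) that do not depend on \(n\).
\begin{itemize}
\item On a copy interval \(I_j\), \(F_j(\Gamma_n(s))=(1-\theta(s))P_j^{(n)}+\theta(s)Q_j^{(n)}\). This uses \(\theta\in[0,1]\) and the fact that \(F_j\) is affine, so it preserves convex combinations.
\item On a connector interval \(J_j\), the straight connector is \((1-s)Q_j^{(n)}+sP_{j+1}^{(n)}\).
\item The anchor is \(\Gamma_{n+1}(t)=(1-\theta(t))a_{n+1}+\theta(t)b_{n+1}\).
\end{itemize}
Inserting the expansions from Step 1, each \(B_n(t)\) on \([0,1]\) becomes \(\sum_\alpha\lambda_{n,\alpha}B^{(\alpha)}(t)\). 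Here \(B^{(\alpha)}\) is defined piecewise by the same formulas with \(a_n,b_n\) replaced by \(a^{(\alpha)},b^{(\alpha)}\). The translations appear only for \(\alpha=0\). We set \(B^{(\alpha)}=0\) outside \([0,1]\).

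\textbf{Step 3: check that each \(B^{(\alpha)}\) is CPwL and compactly supported.} Continuity at the interior junctions \(2j/M\) and \((2j+1)/M\) holds because the copy and connector pieces match at the ports. This is linear bookkeeping in the template vectors, so it holds slotwise. Vanishing at \(t=0\) and \(t=1\) follows because \(F_0(\Gamma_n(0))=P_0^{(n)}=a_{n+1}=\Gamma_{n+1}(0)\), and similarly at \(t=1\). These identities are linear in the templates, with the translation terms consistently in slot \(0\), so they also hold slotwise. Hence each \(B^{(\alpha)}\) is continuous on \(\R\), piecewise affine with breakpoints among \(\{k/M\}\), and supported in \([0,1]\). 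Finally, the recursion \eqref{eq:section6-visible-affine-recursion} has uniformly bounded breakpoint data, since the \(B_n\) lie in a fixed \((N+1)\)-dimensional span. It can therefore be fed into \Cref{thm:black-box-stage-dependent}, after reindexing the copy shifts \(2j\) as ordinary \(A_{2j}\) with zero matrices at odd indices.

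The main obstacle is Step 1: making the stage-\((n+1)\) anchor expressible with the stage-\(n\) coefficients. The affine endpoint update and the normalization \(\lambda_{n,0}\equiv1\) resolve it.
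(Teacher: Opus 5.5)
Your proposal is correct and follows the paper's route. Like the paper, you use port templates with the translations $u_j$ absorbed into the slot $\alpha=0$ via $\lambda_{n,0}\equiv1$, expand the straight connectors the same way, and substitute termwise into the piecewise forcing formulas.

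Your Step~1 is more careful than the paper's proof, which substitutes ``the anchor expansions'' without noting that $\Gamma_{n+1}$ would naively carry the coefficients $\lambda_{n+1,\alpha}$. Rewriting $\Gamma_{n+1}$ with stage-$n$ coefficients via $a_{n+1}=F_0(a_n)$ and $b_{n+1}=F_{\ell-1}(b_n)$, and then checking slotwise continuity and vanishing at $t=0,1$, is exactly what makes the identity $B_n=\sum_\alpha\lambda_{n,\alpha}B^{(\alpha)}$ and the compact support of each $B^{(\alpha)}$ go through.
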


\begin{proof}
Define the anchor templates
\[
\Gamma^{(\alpha)}(t):=a^{(\alpha)}+\theta(t)\bigl(b^{(\alpha)}-a^{(\alpha)}\bigr),
\qquad \alpha=0,\dots,N.
\]
Then \eqref{eq:section6-endpoint-template-expansion} gives
\[
\Gamma_n=\sum_{\alpha=0}^N \lambda_{n,\alpha}\Gamma^{(\alpha)}.
\]

Next, define the port templates by
\[
P_j^{(0)}:=u_j+A_ja^{(0)},
\qquad
Q_j^{(0)}:=u_j+A_jb^{(0)},
\]
and, for \(\alpha\ge1\),
\[
P_j^{(\alpha)}:=A_ja^{(\alpha)},
\qquad
Q_j^{(\alpha)}:=A_jb^{(\alpha)}.
\]
Since \(\lambda_{n,0}\equiv1\), the endpoint expansions imply
\[
P_j^{(n)}=\sum_{\alpha=0}^N \lambda_{n,\alpha}P_j^{(\alpha)},
\qquad
Q_j^{(n)}=\sum_{\alpha=0}^N \lambda_{n,\alpha}Q_j^{(\alpha)}.
\]
Hence each straight connector \(\zeta_j^{(n)}\) admits the same expansion, and substituting these
anchor and connector expansions into
\eqref{eq:section6-visible-forcing-copy}--\eqref{eq:section6-visible-forcing-connector}
gives \eqref{eq:section6-visible-forcing-span}.
The final claim is then immediate from \Cref{thm:black-box-stage-dependent}.
\end{proof}

\begin{example}[Hilbert curves \cite{Hilbert}]
Take \(p=2\), \(\ell=4\), and hence \(M=7\).
Let
\[
A_0=\frac12
\begin{pmatrix}
0&1\\
1&0
\end{pmatrix},
\qquad
A_1=\frac12 I_2,
\qquad
A_2=\frac12 I_2,
\qquad
A_3=\frac12
\begin{pmatrix}
0&-1\\
-1&0
\end{pmatrix},
\]
and
\[
u_0=(0,0),\qquad
u_1=(0,\tfrac12),\qquad
u_2=(\tfrac12,\tfrac12),\qquad
u_3=(1,\tfrac12).
\]
Thus \(F_j(x)=A_jx+u_j\), \(j=0,1,2,3\), places the four copies in the dyadic subsquares in the
usual Hilbert order.

The stage-\(n\) geometric curve has endpoints
\[
a_n=\Bigl(\frac{\lambda_n}{2},\frac{\lambda_n}{2}\Bigr),
\qquad
b_n=\Bigl(1-\frac{\lambda_n}{2},\frac{\lambda_n}{2}\Bigr),
\qquad
\lambda_n:=2^{-n}.
\]
Equivalently,
\[
a_n=a^{(0)}+\lambda_n a^{(1)},
\qquad
b_n=b^{(0)}+\lambda_n b^{(1)},
\]
with
\[
a^{(0)}=(0,0),\qquad a^{(1)}=\Bigl(\frac12,\frac12\Bigr),
\qquad
b^{(0)}=(1,0),\qquad b^{(1)}=\Bigl(-\frac12,\frac12\Bigr).
\]
Choose straight anchors and straight connectors. Then
\Cref{prop:section6-visible-fixed-span} gives
\[
B_n=B^{(0)}+\lambda_n B^{(1)}.
\]
Hence the direct geometric Hilbert recursion falls within the scope of
\Cref{thm:black-box-stage-dependent}.
\end{example}

\begin{example}[Morton curves \cite{Lebesgue,Morton}]
Let \(p\ge1\), set
\[
\ell=2^p,
\qquad
M=2\ell-1=2^{p+1}-1,
\]
and enumerate the vertices of \(\{0,1\}^p\) in increasing binary order by
\[
\varepsilon^{(0)},\varepsilon^{(1)},\dots,\varepsilon^{(\ell-1)}\in\{0,1\}^p.
\]
For each \(r=0,\dots,\ell-1\), define
\[
A_r=\frac12 I_p,
\qquad
u_r=\frac12\,\varepsilon^{(r)},
\qquad
F_r(x)=A_r x+u_r.
\]
Thus \(F_r\) places a copy of the unit cube into the dyadic subcube
\(\frac12\varepsilon^{(r)}+[0,\frac12]^p\),
and the \(\ell\) copies are traversed in Morton order.

The stage-\(n\) geometric curve has endpoints
\[
a_n=\frac{\lambda_n}{2}\mathbf1,
\qquad
b_n=\Bigl(1-\frac{\lambda_n}{2}\Bigr)\mathbf1,
\qquad
\lambda_n:=2^{-n},
\]
where \(\mathbf1=(1,\dots,1)\in\R^p\).
Equivalently,
\[
a_n=a^{(0)}+\lambda_n a^{(1)},
\qquad
b_n=b^{(0)}+\lambda_n b^{(1)},
\]
with
\[
a^{(0)}=0,\qquad a^{(1)}=\frac12\mathbf1,
\qquad
b^{(0)}=\mathbf1,\qquad b^{(1)}=-\frac12\mathbf1.
\]
Choose straight anchors and straight connectors. Then
\Cref{prop:section6-visible-fixed-span} gives
\[
B_n=B^{(0)}+\lambda_n B^{(1)}.
\]
Hence the direct geometric Morton recursion in \(\R^p\) falls within the scope of
\Cref{thm:black-box-stage-dependent}.
\end{example}

\section{Conclusions}
\label{sec:conclusions}

We have developed a homogeneous \(M\)-ary vector-valued extension of the scalar binary refinable-
function construction of \cite{source} for compactly supported CPwL curves.  The main result is
that the homogeneous iterates \(V^n\gamma\) admit exact ReLU realizations of fixed width and depth
\(O(n)\).  The central new ingredient is an exact loop controller for the residual dynamics: instead
of iterating scalar surrogate residuals, the construction transports the residual orbit exactly on a
polygonal loop and recovers the scalar factor and selector matrices from that loop by CPwL
readouts.

The loop-controller viewpoint separates exact forward transport from seam handling.  The residual
state is exact at every stage, while the remaining ambiguity at the loop seam is confined to the
terminal readout/selector stage and absorbed by the support separation of the special hat.  This
gives a cleaner and more geometric proof architecture than the scalar-surrogate bookkeeping, while
retaining the fixed-width, linear-depth complexity.

Beyond the main homogeneous theorem, the extended version records several finite-sum reductions
and geometric applications.  Affine and stage-dependent forcing rules are expanded into homogeneous
pieces and treated term by term, giving exact fixed-width realizations with quadratic depth
\(O(n^2)\).  Open curves are handled by subtracting an anchor profile; the compactly supported
defect then satisfies an affine recursion whose forcing term is the anchor mismatch.  Thus, except
when this mismatch vanishes, anchored open curves naturally fall under the affine quadratic-depth
corollary rather than the homogeneous linear-depth theorem.

The geometric examples show how recursive polygonal rules enter the same language.  Forcing-free
homogeneous generators, such as dragon- and Koch-type rules, are encoded by matrices satisfying
\(A_j e=P_{j+1}-P_j\).  More elaborate anchored, affine, or stage-dependent constructions, including
Gosper-, Morton-, and Hilbert-type variants, are covered by the corresponding finite-sum reductions.
In this way, a broad class of recursive curve constructions can be organized around the homogeneous
loop-controller theorem.

Natural next steps include a genuinely recursive affine construction with linear-depth complexity,
extensions to multidimensional parameter domains, and sharper bounds on weights and biases.  For
limiting refinable objects, the present exact finite-iterate theorem can be combined with the standard
cascade convergence arguments of \cite{source} to recover the corresponding asymptotic
approximation statements.

\section*{Acknowledgments}

This work was supported by NSERC Discovery Grants Program.

\end{document}